\newtheorem{theorem}{Theorem}[section]
\newtheorem{lemma}[theorem]{Lemma}
\newtheorem{cor}[theorem]{Corollary}
\theoremstyle{definition}
\newtheorem{definition}[theorem]{Definition}
\theoremstyle{remark}
\newtheorem{remark}{Remark}[section]
\newcommand{\bb}[1]{\mathbb{#1}}
\newcommand{\R}{\bb{R}}
\chardef\bslash=`\\ % p. 424, TeXbook
\numberwithin{equation}{section}
\newcommand{\N}{\mathbb{N}}
\newcommand{\Z}{\mathbb{Z}}
\newcommand{\Com}{\mathbb{C}}
\newcommand{\re}{\operatorname{Re}}
\newcommand{\ima}{\operatorname{Im}}
\newcommand{\sgn}{\operatorname{sgn}}
\def\bm{\left( \begin{array}{cc}}
\def\endm{\end{array}\right)}
\newcommand{\be}{\begin{equation}}
\newcommand{\ee}{\end{equation}}
\newcommand{\ba}{\left(\begin{array}{c}}
\newcommand{\ea}{\end{array}\right)}
\newcommand{\bea}{\begin{eqnarray}}
\newcommand{\eea}{\end{eqnarray}}
\newcommand{\bee}{\begin{eqnarray*}}
\newcommand{\eee}{\end{eqnarray*}}
\newcommand{\ben}{\begin{enumerate}}
\newcommand{\een}{\end{enumerate}}
\numberwithin{figure}{section}
\numberwithin{equation}{section}
\begin{document}
	\title[Riemann Zeta Heat flow]{The Generalized Riemann Zeta heat flow}
	\author[Castillo]{V\'ictor Castillo}  % in alphabetical order
	\address{Departamento de Matem\'aticas, Universidad de Santiago de Chile USACH.}
	\email{vacastil@mat.uc.cl}
	\thanks{V. C. was partially funded by the Chilean research grant ANID 2022 Exploration 13220060.}
	\author[Mu\~noz]{Claudio Mu\~noz}  % in alphabetical order
	\address{Departamento de Ingenier\'{\i}a Matem\'atica and Centro
de Modelamiento Matem\'atico (UMI 2807 CNRS), Universidad de Chile, Casilla
170 Correo 3, Santiago, Chile.}
	\email{cmunoz@dim.uchile.cl}
	\thanks{C.M. was partially funded by Chilean research grants ANID 2022 Exploration 13220060, FONDECYT 1191412, 1231250, and Basal CMM FB210005 and MathAmSud WAFFLE 23-MATH-18.}
	\author[Poblete]{Felipe Poblete}
\address{Instituto de Ciencias F\'isicas y Matem\'aticas, Facultad de Ciencias, Universidad Austral de Chile, Valdivia, Chile.}
\email{felipe.poblete@uach.cl}
\thanks{F.P.'s work is partially supported by ANID Exploration project 13220060, ANID project FONDECYT 1221076 and MathAmSud WAFFLE 23-MATH-18.}
\author[Salinas]{Vicente Salinas}  % in alphabetical order
	\address{Departamento de Ingenier\'{\i}a Matem\'atica, Universidad de Chile, Casilla 170 Correo 3, Santiago, Chile.}
	\email{vsalinas@dim.uchile.cl}
	\thanks{V. S. was funded by ANID 2022 Exploration 13220060, ANID Fondecyt 1231250 and Beca ANID-Subdirección de Capital Humano/Doctorado Nacional/2023-21231505.}
%\today
\keywords{Heat equation, Riemann Zeta function, zeros, local existence, blow-up}

\maketitle

\begin{abstract}
We consider the PDE flow associated to Riemann zeta and general Dirichlet $L$-functions. These are models characterized by nonlinearities appearing in classical number theory problems, and generalizing the classical holomorphic Riemann flow studied by Broughan and Barnett. Each zero of a Dirichlet $L$-function is an exact solution of the model. In this paper, we first show local existence of bounded continuous solutions in the Duhamel sense to any Dirichlet $L$-function flow with initial condition far from the pole (as long as this exists). In a second result, we prove global existence in the case of nonlinearities of the form Dirichlet $L$-functions and data initially on the right of a possible pole at $s=1$. Additional global well-posedness and convergence results are proved in the case of the defocusing Riemann zeta nonlinearity and initial data located on the real line and close to the trivial zeros of the zeta. The asymptotic stability of any stable zero is also proved. Finally, in the Riemann zeta case, we consider the ``focusing'' model, and prove blow-up of solutions near the pole $s=1$. 
\end{abstract}
%\tableofcontents

\section{Introduction and Main Results}

\subsection{Setting}
Let $d\geq 1$ and $\N=1,2,3,\ldots$. In this paper we consider the initial value problem associated to the heat flow of the Riemann zeta function
\begin{equation}\label{eqn:PDZ}
\begin{aligned}
\partial_t u (t,x)= &~{}\Delta u (t,x)+ \lambda \zeta(u(t,x)), \quad x\in\R^d, \quad t\ge 0,\\
u(0,x) = &~{} g(x) \quad \hbox{given}.
\end{aligned}
\end{equation}
Here $u=u(t,x)\in \Com$, $\zeta$ denotes the classical Riemann zeta function, and $\lambda \in \{-1,1\}$. For some reasons to be explained below, we shall say that $\lambda=1$ corresponds to a defocusing case, and $\lambda=-1$ will represent a focusing case. The model \eqref{eqn:PDZ} is part of a larger family of PDE flows arising from number theory, represented by nonlinearities of complex-valued type referred as Dirichlet $L$-functions. % that will be considered in detail in this work. 

\medskip

The zeta function was introduced by Riemann in 1859 \cite{R59}. It originates from the classical series 
\[
\sum_{n= 1}^\infty \frac{1}{n^s},
\]
that converges if $\re(s)>1$, and diverges if $s$ approaches 1. It can be uniquely extended as a meromorphic function defined on the complex plane, with a unique pole at $s=1$. Trivial zeroes are located at $s=-2k$, $k=1,2,\ldots$.  Nontrivial zeroes of the zeta are deeply related to prime numbers. Indeed, Riemann \cite{R59} assumed in 1859 that all nontrivial zeroes are placed on the line $\re(s)=\frac{1}{2}$. This is the famous Riemann's hypothesis. This problem has attracted considerable interest from many mathematicians, although after 163 years, it remains unsolved. 

\medskip

It is well-known that the validity of the Riemann's hypothesis implies deep consequences on the distribution of prime numbers. Modifications of the zeta on varieties over finite fields and their corresponding Riemann's hypothesis have been proved true, see e.g. Deligne \cite{Del1, Del2}. In terms of numerical results, it is known that the Riemann's hypothesis is true up to a height in the imaginary variable of size $3 \times 10^{12}$ \cite{PT21}. On the other hand, Odlyzko \cite{Odl} showed that the zeroes behave very much like the eigenvalues of a random Hermitian matrix, suggesting that they are in some sense eigenvalues of an unknown self-adjoint operator. This result supports the so-called Montgomery conjecture \cite{Mon}. Finally, Bourgain obtained bounds on the growth of the $\zeta$ around the critical line \cite{Bourgain}.

\medskip

New results by Rodgers and Tao \cite{RT} showed that the De Bruijn-Newman constant $\Lambda$ is nonnegative ($\Lambda \geq 0$)\footnote{Interestingly, an ODE system related to the zeroes of the De Bruijn-Newman parametric function is related to a similar ODE system obtained when studying rational solutions to KP \cite{GPS}.}. Previously, Polymath \cite{Pol} obtained a new upper bound for the De Bruijn-Newman constant $\Lambda$ by numerical computations, improving previous foundational results by Csordas, Smith and Varga \cite{CSV}. This constant measures the validity of the Riemann's hypothesis, in the sense that $\Lambda  \leq 0$ is equivalent to the Riemann's hypothesis. So the only possibility for the Riemann's hypothesis to be true is that $\Lambda = 0$. In this sense, it is maybe false, or just barely true. Moreover, a striking property is that the particular function involved in the definition of the De Bruijn-Newman constant $\Lambda$ satisfies the \emph{backwards heat equation}. This particular finding has motivated us to study a modification of the previous model, seeking for the influence of the zeta nonlinearity in PDE models, starting with \eqref{eqn:PDZ} in more detail. 

\medskip

Another motivation to study \eqref{eqn:PDZ} comes from its ODE counterpart, that has been considered in detail some years ago. Indeed, the holomorphic Riemann flow $s'=\zeta(s)$ was worked by Broughan and Barnett in \cite{BB} (see also \cite{B0}), revealing that zeroes have different structures as critical points. Geometric properties of the holomorphic flow were shown to be equivalent to the Riemann's hypothesis. Notice that every zero of the zeta is an exact solution of each \eqref{eqn:PDZ}. Fig. \ref{Fig:1} shows the behavior of the holomorphic flow around important points such as nontrivial zeroes and the pole $s=1$.  

\medskip

\begin{figure}[htb]
   \centering
   \includegraphics[scale=0.45]{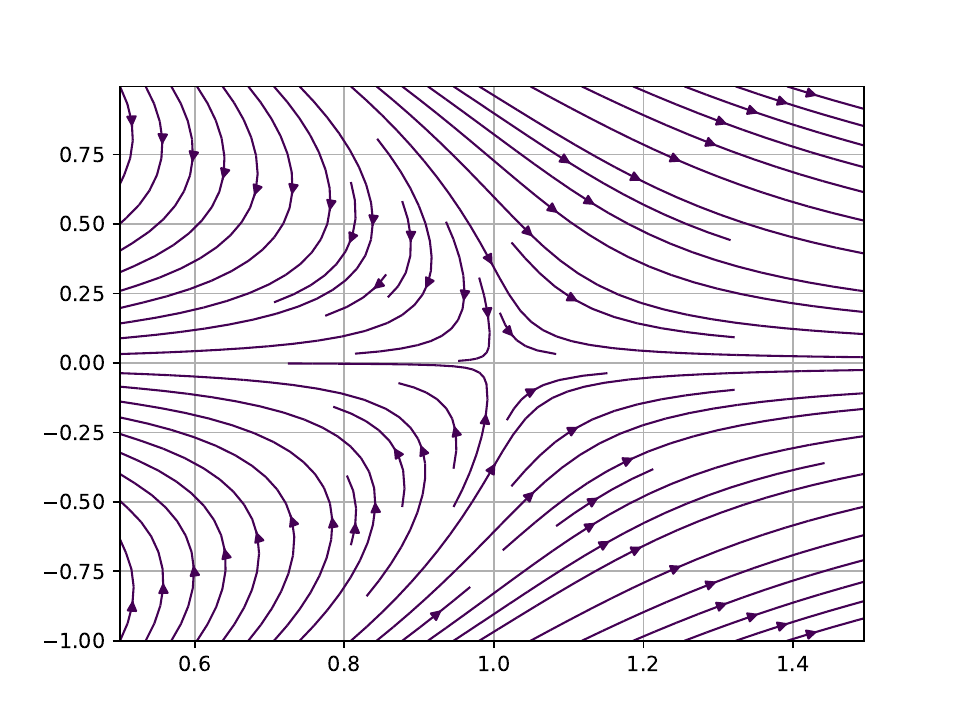}
   \includegraphics[scale=0.45]{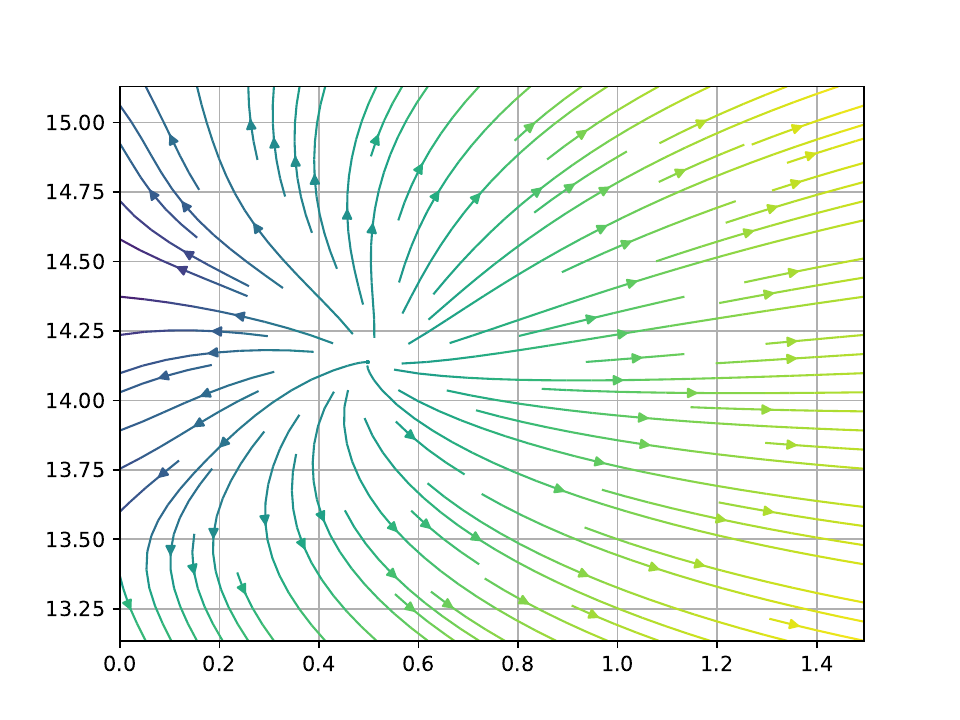} % requires the graphicx package
   \caption{Left: The holomorphic flow \cite{BB} around the pole $s=1$. Right: The same flow around the first Riemann zeta nontrivial zero $s_1\sim \frac12+14.13 i$ (unstable source).}
   \label{Fig:1}
\end{figure}

In the PDE setting, some recent advances in the study of complex valued flows show interesting features as global existence or blow up, depending on particular initial conditions. Guo, Ninomiya, Shimojo, and Yanagida \cite{GNHY} studied \eqref{eqn:PDZ} in the case where $\zeta$ is replaced by a quadratic nonlinearity, showing global existence and blow up. See also Duong \cite{Duo} for the study of \eqref{eqn:PDZ} where $\zeta$ is replaced by the meromorphic function $1/s$, and references therein for further results in this direction. 

%
%\subsection{On the literature} Polya’s approach led to the theory of totally positive functions and variation diminishing transforms. Weil’s work on the Riemann's hypothesis for curves over finite fields led to the transformation of algebraic geometry by Grothendieck. Connes showed how noncommutative geometry suggests connections between geometry, spectral theory and the Riemann's hypothesis. Similarly, we expect that the interaction of PDEs and special functions coming from analytic number theory will lead to new advances in both fields. The validity of the Riemann's hypothesis will imply important consequences in cryptography, cyber security, quantum systems, entropy of black holes, random matrices, among others.

\subsection{Dirichlet $L$-functions} Let $\chi_{m}$ be a Dirichlet character of period $m$ (see Definition \ref{def:Dirichlet} for further details), and consider for $\re(s)>1$
\begin{equation}\label{L_m_serie}
L_{m}(s)=\displaystyle{\sum_{n=1}^\infty \dfrac{\chi_{m}(n)}{n^s}},
\end{equation}
the associated Dirichlet $L$-function. As well as in the case of the Riemann zeta, $L_{m}$ may have a pole at $s=1$, it has a critical line, and it has a corresponding extension to $\re(s)<1$, making this function meromorphic on the plane in the case of a \emph{principal character}, and holomorphic in the case of a \emph{non principal character}, respectively. A particular $L$-function is the zeta itself, obtained up to a multiplicative constant with with $m=1$.

\medskip

Specifically, $L$-functions encode relevant information of objects with geometrical or arithmetical nature as elliptic curves or characters of finite groups, respectively. They are obtained as a generalization of $\zeta$. In this direction, classical extensions are the Dirichlet $L$-functions, which are related to Dirichlet characters mod $q$. They are a key piece to study the behavior of primes in arithmetic progressions. A more general point of view comes from Artin $L$-functions, which are associated to a number field $K$ and an $n$-dimensional Galois representation of the Galois group of $K/Q$, see \cite{PM1,PM2} for interesting results in this direction. The analogue of Riemann's hypothesis for $L$-functions is called Generalized Riemann Hypothesis. % (GRiemann's hypothesis).

\subsection{Main results}
The first result of this paper shows that every generalized Riemann flow has a well-posed theory for continuous and bounded initial data placed sufficiently far from the pole $s=1$.

\begin{theorem}[Local well-posedness]\label{LDirichlet}
Let $\lambda=\pm1,$ and let $L_m$ be a  Dirichlet $L$-function obtained from a principal character $m$. Consider an initial datum $g\in L^{\infty}(\R^d;\Com)\cap C(\R^d;\Com) $, $g=g_1+ig_2$, and $g_i\in\R$ be such that the uniform condition 
\begin{equation}\label{uniform_g}
\inf_{x\in \R^d} \left( |g_1(x)-1|+|g_2(x)| \right)>0,
\end{equation}
is satisfied. Then there exists a time $T>0$ such that the problem 
\begin{equation} \label{eqn:edp} 
\begin{aligned}
\partial_t u(t,x)=&~{}\Delta u(t,x)+ \lambda L_{m}(u(t,x))\quad x \in \R^d \quad t\ge 0,\\
u(0,x)=&~{}g(x),  
\end{aligned}
\end{equation}
has a unique solution $u\in C([0,T); L^{\infty}(\R^d;\Com)\cap C(\R^d;\Com) )$ under which the corresponding solution map is continuous. Moreover, if $T^*\leq +\infty$ denotes the maximal time of existence of $u$, and $T^*<+\infty$, then there exists $x_0\in\R^d$, or $x_1\in\R^d$ such that 
\begin{equation}\label{dicotomia}
\liminf_{t\uparrow T^*} |u_1(t,x_0) -1| +|u_2(t,x_0)|=0, \quad \hbox{ or } \quad \limsup_{t\uparrow T^*} |u(t,x_1)| =+\infty.
\end{equation}
\end{theorem}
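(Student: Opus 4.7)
The plan is to recast \eqref{eqn:edp} in its mild (Duhamel) form
\[
u(t,x) = (e^{t\Delta}g)(x) + \lambda \int_0^t e^{(t-s)\Delta} L_m(u(s,\cdot))(x)\,ds,
\]
and run a Banach fixed-point argument on a carefully chosen complete metric space. Set $\delta_0 := \inf_{x}(|g_1(x)-1|+|g_2(x)|) > 0$ (by hypothesis \eqref{uniform_g}) and $R_0 := \|g\|_{\infty} < \infty$. On the compact set $K := \{z\in\Com : |z|\leq R_0+1,\, |z-1|\geq \delta_0/4\}$, the function $L_m$ is holomorphic (the only pole $s=1$ is excluded) and hence both bounded, by some $M$, and Lipschitz, with constant $K_L$.

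For $T>0$ to be chosen, consider the closed set
\[
X_T := \Bigl\{u \in C\bigl([0,T]; L^{\infty}\cap C_b(\R^d;\Com)\bigr) :\; \|u\|_{L^{\infty}_{t,x}} \leq R_0+1,\; \inf_{t,x}\bigl(|u_1(t,x)-1|+|u_2(t,x)|\bigr)\geq \tfrac{\delta_0}{2}\Bigr\},
\]
endowed with the $L^{\infty}_{t,x}$ metric. On $X_T$ the values of $u$ stay in $K$, so $L_m\circ u$ makes sense and is bounded by $M$. Define $\Phi(u)$ as the right-hand side of the Duhamel formula. Using that $e^{t\Delta}$ is a contraction on $L^{\infty}$, one estimates
\[
\|\Phi(u)(t)\|_\infty \leq R_0 + TM, \qquad \|\Phi(u)-\Phi(v)\|_{L^{\infty}_{t,x}} \leq TK_L\,\|u-v\|_{L^{\infty}_{t,x}}.
\]
Choosing $T < \min(1/M,\,1/(2K_L))$ secures the bound and the contraction property. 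The uniqueness and continuity of the solution map follow directly from the fixed-point theorem applied in $X_T$.

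The delicate step is the pole-avoidance condition in the definition of $X_T$. One must verify that
\[
|\Phi(u)(t,x)-1| \geq |(e^{t\Delta}g)(x)-1| - TM \geq \tfrac{\delta_0}{2}
\]
uniformly in $x$. Since $(e^{t\Delta}g)(x)\to g(x)$ as $t\to 0^+$ locally uniformly in $x$, for bounded and continuous $g$ this estimate can be obtained by exploiting the uniform lower bound $|g(x)-1|\geq \delta_0$ together with the standard fact that the heat flow keeps $\|e^{t\Delta}g - g\|_\infty$ small on short time intervals for $g\in BUC$ (for merely bounded continuous $g$ the same conclusion follows after an approximation/cutoff argument). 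Shrinking $T$ if necessary to absorb the $TM$ term yields the required lower bound.

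For the last part, the maximal time $T^*$ is defined by standard continuation: as long as $u(t,\cdot)$ remains bounded in $L^\infty$ and uniformly bounded away from $s=1$ at time $T^*$, one can reapply the above fixed-point scheme with initial condition $u(T^*,\cdot)$ and extend beyond $T^*$. The contrapositive is the dichotomy \eqref{dicotomia}: either $\limsup_{t\uparrow T^*}|u(t,x_1)|=+\infty$ for some $x_1$, or there exists $x_0$ along which $\liminf_{t\uparrow T^*}(|u_1(t,x_0)-1|+|u_2(t,x_0)|)=0$, i.e.\ the solution approaches the pole. The main technical obstacle is precisely the handling of the pole-avoidance condition under the heat semigroup in the complex-valued setting (the heat kernel does not preserve pointwise lower bounds of $|u-1|$); once this is controlled via short-time continuity, the rest of the proof is an adaptation of the classical semilinear heat equation theory.
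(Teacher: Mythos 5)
Your overall strategy---recasting the equation in mild (Duhamel) form, running a fixed-point argument in a closed set of continuous bounded curves kept a fixed distance from the pole $s=1$, and closing with the standard continuation alternative---is the same as the paper's, and you correctly identify the delicate step: showing $e^{t\Delta}g$ stays away from $1$. However, your resolution of that step has a genuine gap. You want $|e^{t\Delta}g-1|\ge\delta_0/2$ on $[0,T]\times\R^d$ and argue via $\|e^{t\Delta}g-g\|_\infty\to 0$ as $t\to 0^+$. This fails for the data allowed by the theorem: strong $L^\infty$-continuity of the heat semigroup holds precisely on $\mathrm{BUC}(\R^d)$, and $C_b(\R^d)\setminus\mathrm{BUC}(\R^d)$ is not $L^\infty$-approximable by $\mathrm{BUC}$ functions, so the proposed ``approximation/cutoff argument'' cannot repair this. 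Furthermore, even for $g\in \mathrm{BUC}$, a pointwise lower bound on $|g-1|$ does not propagate to one on $|e^{t\Delta}(g-1)|$ on a timescale independent of $g$: for $g-1=\delta_0 e^{ikx}$ one has $|e^{t\Delta}(g-1)|=\delta_0 e^{-k^2 t}$, which is uniformly small once $t\gtrsim 1/k^2$. So a modulus-of-continuity dependence of $T$ would enter, and it is left unquantified.

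The paper sidesteps this entirely by using positivity and unit mass of the heat kernel rather than strong continuity. It works with the $\ell^1$-type quantity $P(h)=|h_1-1|+|h_2|$ and notes $e^{t\Delta}g_1-1=e^{t\Delta}(g_1-1)$: whenever the real-valued function $g_1-1$ is uniformly bounded away from zero it has constant sign (continuity plus connectedness of $\R^d$), and convolving with the positive unit-mass kernel $K_t$ preserves the one-sided bound, giving $|e^{t\Delta}g_1-1|\ge\inf|g_1-1|$ for \emph{every} $t$; the same holds for $g_2$. This yields $P(e^{t\Delta}g)\ge\tfrac{3}{2}\varepsilon$ at all times, after which the $O(T M_{1,\beta,\varepsilon,m})$ Duhamel contribution is absorbed exactly as you propose to keep $P(F(u))\ge\varepsilon$. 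The remaining difference is cosmetic: you obtain boundedness and Lipschitz estimates for $L_m$ from holomorphicity on the compact set $K$, while the paper derives explicit constants through the Hurwitz-zeta decomposition; for local well-posedness the soft argument suffices (the paper itself remarks on this), so it is the pole-avoidance step above that needs to be corrected.
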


The condition on $g$ \eqref{uniform_g} is natural in view that $L$-functions of principal type, the original Riemann zeta function among them, have a pole at $z=1$. Therefore, \eqref{uniform_g} prevents that for times arbitrarily close to zero one may have indefinite terms in \eqref{eqn:edp}. 

\begin{remark}
Notice that the chosen space $C((0,T]; L^{\infty}(\R^d;\Com)\cap C(\R^d;\Com) )$ is natural for  \eqref{eqn:edp}, in view that every zero of $L_{m}$ is a constant solution. Consequently, Sobolev spaces with decay at infinity are in principle only well-suited when one considers the behavior of solutions close to exact solutions, such as nontrivial zeros. 
\end{remark}

The following corollary stated without proof transpires easily from the previous result, and it is related to the case where $L_m$ is a Dirichlet $L$-function with $m$ non principal character. Notice that the condition \eqref{uniform_g} on $g$ is not needed anymore.

\begin{cor}[Non principal character case]%\label{LDirichlet_nopolo} 
Let $\lambda=\pm 1$, and  $L_{m}$ be a Dirichlet $L$-function associated to a non principal character $m$ (see Definition \ref{def:Dirichletprincipal}).  Let $g\in C(\R^d,\Com) \cap L^\infty(\R^d,\Com)$ be a given initial datum. Then the initial value problem \eqref{eqn:edp} %\eqref{eqn:edp3}:
%\[%begin{equation}\label{eqn:edp3} 
%\begin{aligned}
%\partial_t u(t,x)=&~{}\Delta u(t,x)+ \lambda L_{m}(u(t,x))\quad x \in \R^d \quad t\ge 0,\\
%u(0,x)=&~{} g(x),  
%\end{aligned}
%\]%end{equation}
has a unique solution $u\in C([0,T); L^{\infty}(\R^d;\Com)\cap C(\R^d;\Com) )$ under which the corresponding solution map is continuous. 
\end{cor}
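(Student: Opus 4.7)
The plan is to mimic the Duhamel/fixed-point argument used for Theorem \ref{LDirichlet}, exploiting the crucial additional fact that when $m$ is a non principal character, the Dirichlet $L$-function $L_{m}$ is \emph{entire} on $\Com$. In particular there is no pole at $s=1$ to avoid, which is precisely what makes the uniform lower bound \eqref{uniform_g} unnecessary. Since $g\in L^{\infty}(\Rd;\Com)\cap C(\Rd;\Com)$, the image $g(\Rd)$ lies in a bounded set of $\Com$, so a natural closed metric ball
\[
B_{R,T}:=\Bigl\{u\in C([0,T];L^{\infty}(\Rd;\Com)\cap C(\Rd;\Com)):\ \sup_{t\in[0,T]}\norm{u(t)-e^{t\Delta}g}_{\infty}\le R\Bigr\},
\]
with $R>0$ fixed, sits entirely inside a compact subset $K\subset \Com$ on which $L_{m}$ is holomorphic and hence bounded and Lipschitz.

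The main step is to show that the Duhamel operator
\[
\Phi(u)(t,x):=(e^{t\Delta}g)(x)+\lambda\int_{0}^{t}\bigl(e^{(t-s)\Delta}L_{m}(u(s,\cdot))\bigr)(x)\,ds
\]
is a contraction on $B_{R,T}$ for $T$ sufficiently small. I would verify: (i) $\Phi$ preserves $L^{\infty}\cap C$, using that the heat semigroup is a contraction on $L^{\infty}$ and maps bounded continuous functions to bounded continuous functions; (ii) $\Phi$ maps $B_{R,T}$ into itself, because
\[
\norm{\Phi(u)(t)-e^{t\Delta}g}_{\infty}\le T\sup_{z\in K}|L_{m}(z)|,
\]
which is $\le R$ for $T$ small; (iii) the Lipschitz bound $|L_{m}(z_1)-L_{m}(z_2)|\le C_K|z_1-z_2|$ on $K$ yields
\[
\norm{\Phi(u)-\Phi(v)}_{C([0,T];L^{\infty})}\le T\,C_K\,\norm{u-v}_{C([0,T];L^{\infty})},
\]
so $\Phi$ is a strict contraction once $T<1/C_K$. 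Banach's fixed point theorem then gives existence and uniqueness in $B_{R,T}$; uniqueness in the full space $C([0,T);L^{\infty}\cap C)$ and continuity of the solution map follow by the standard contraction/Gronwall-in-$L^{\infty}$ argument already used for Theorem \ref{LDirichlet}.

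The only genuinely new ingredient compared with the principal case is the entirety of $L_{m}$, which replaces the role of the condition \eqref{uniform_g}: since there is no pole to stay away from, one may take $K$ to be any closed ball in $\Com$ containing a neighborhood of $g(\Rd)$ and the argument goes through without constraining $g$. Consequently, there is no dichotomy analogous to the first alternative in \eqref{dicotomia}: if the maximal time $T^{*}$ is finite, the only way this can occur is the blow-up $\limsup_{t\uparrow T^{*}}|u(t,x_{1})|=+\infty$ at some $x_{1}\in\Rd$, which is what allows one to restate the result without any assumption on the initial datum beyond boundedness and continuity.

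The main (and only) obstacle in this program is checking carefully that the heat extension $e^{t\Delta}g$ and the Duhamel integral preserve the space $L^{\infty}(\Rd;\Com)\cap C(\Rd;\Com)$ in a way compatible with the $C([0,T];\cdot)$ norm; this is, however, classical and identical to the corresponding step in the proof of Theorem \ref{LDirichlet}.
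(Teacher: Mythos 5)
Your argument is correct and is exactly the route the paper has in mind: the paper states this corollary without proof, remarking only that it ``transpires easily'' from Theorem~\ref{LDirichlet}, and your Duhamel/contraction argument reproduces that proof with the single simplification that $L_m$ is entire for a non principal character, so the ball $\mathcal B$ needs no lower bound of the form $P(u)\ge\varepsilon$ and the first alternative in \eqref{dicotomia} disappears. The only cosmetic difference is that you center your ball at $e^{t\Delta}g$ rather than at the origin as in the paper's set $\mathcal B$; this is an equivalent standard variant and changes nothing.
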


Now we consider the problem of global existence. In order to state our result, we require a definition proposed by van de Lune in the case of general Dirichlet $L_m$ functions.

\begin{definition}[van de Lune \cite{VanLune}]\label{sigma0}
Given a general Dirichlet $L$-function $L_m$, we define $\sigma_0=\sigma_0(L_m)$ as follows:
\begin{equation}\label{sigma00}
\sigma_0 := \sup \big\{ \sigma\in\mathbb R ~ : ~ \hbox{there exists } t\in\mathbb R \hbox{ such that } \re L_m(\sigma+ it )=0 \big\}.
\end{equation}
\end{definition}

%{\color{red} 
%Creo que sería mejor considerar solo sigma en $[1,2]$, pues así queda más claro que es un conjunto acotado y no vacio, esto no cambia el valor de considerarlo sobre todo R. 

%También comentar que el 2 es un valor más cerrado, pero lo que se sabe es que es menor o igual a 1.71
%}

In other words, in the case where it is attained, $\sigma_0$ is the largest real number $\sigma$ such that the real part of $L_m$ may become zero at some point $\sigma+ it$, for some $t\in\R$. Above this point (if finite), one should have always $\re L_m \neq 0$.

\begin{remark}\label{caso_zeta_VDL}
It is well-known that in the case where $L_m= \zeta$, one has that $\sigma_0=\sigma_0(\zeta)$ as in Definition \ref{sigma0} satisfies $1<\sigma_0 <2$, and for all $t\in\R$,  $\re \zeta(\sigma + it )>0$ if $\sigma >\sigma_0$. Indeed, in this case one can take $\sigma_0\sim 1.11$ \cite{VanLune}. 
\end{remark}

\begin{remark}\label{caso_general_VDL}
Let $\sigma_1\in\R$ be such that $\zeta(\sigma_1)=2$, $\sigma_1\sim 1.71$. In the general case of a Dirichlet $L_m$, it is not difficult to show that $\sigma_0(L_m) $  in Definition \ref{sigma0} is finite and satisfies
\[
\sigma_0(L_m) \leq \sigma_1,
\]
since for all $t\in\R$, one has $\re L_m(\sigma + it ) >0$ if $\sigma>\sigma_1$. See Lemma \ref{lem:Repos} for a proof of this fact.
\end{remark}

\medskip

Our first global well-posedness result is the following. Notice that we will assume $\lambda=1$ in \eqref{eqn:edp}. Finally, for a function $g(x)=g_1(x)+ig_2(x)$, $g_1,g_2$ real-valued, consider
\begin{equation}\label{I_S}
\begin{aligned}
&~{} I_1=\inf_{x\in\R^d}g_1(x), \qquad\qquad  I_2=\inf_{x\in\R^d}g_2(x), \\
&~{}  S_1=\sup_{x\in\R^d}g_1(x), \quad \hbox{and} \quad S_2=\sup_{x\in\R^d}g_2(x).
\end{aligned}
\end{equation}

\begin{theorem}[Global well-posedness]\label{teo:globalpos}
Let $L_{m}$ be a Dirichlet $L$-function associated to a principal character $\chi_m$, and $\sigma_0(L_m)$ from \eqref{sigma00}. Let $g\in L^{\infty}(\R^d,\Com)\cap C(\R^d,\Com)$ be an initial datum of the form $g(x)=g_1(x)+ig_2(x)$, $g_1,g_2$ real-valued, and satisfying the condition
%\begin{equation}\label{cond_00}
%\sigma_0(L_m)>1, %, \quad \hbox{where} \quad \zeta(\sigma)=2. 
%\end{equation}
%and
\begin{equation}\label{cond_0}
I_1> \max\{ 1,\sigma_0(L_m)\}. %, \quad \hbox{where} \quad \zeta(\sigma)=2. 
\end{equation}	 
Let $u$ be the local solution to the problem 
\begin{equation}\label{eqn:PDZN}
\begin{aligned}
\partial_t u (t,x)= &~{}\Delta u (t,x)+ L_m(u(t,x)), \quad x\in\R^d, \quad t\ge 0,\\
u(0,x) = &~{} g(x).
\end{aligned}
\end{equation}
Then $u$ is globally well-defined. Moreover, if $u=u_1+iu_2$, $u_1,u_2$ real-valued, one has for all $t\geq 0$,
\begin{equation}\label{des_1}
\max\{0,2-\zeta(I_1)\}t+I_1\leq u_1(t,x)\leq \zeta(I_1)t+S_1,
\end{equation}
and
\begin{equation}\label{des_2}
(1-\zeta(I_1))t+I_2\leq u_2(t,x)\leq (\zeta(I_1)-1)t+S_2.
\end{equation}
\end{theorem}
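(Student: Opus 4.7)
My approach is to combine the Duhamel formulation of \eqref{eqn:PDZN},
\[
u(t,x) = e^{t\Delta} g(x) + \int_0^t e^{(t-s)\Delta} L_m(u(s,x))\, ds,
\]
with the positivity and contraction properties of the heat semigroup and a bootstrap argument on the lower bound of the real part $u_1$. Once the a priori bound $u_1(t,x)\geq I_1>1$ is established on the full interval of existence, the linear-in-$t$ estimates \eqref{des_1}--\eqref{des_2} follow, and global existence is then immediate from the blow-up alternative \eqref{dicotomia} of Theorem \ref{LDirichlet}: the bound $u_1\geq I_1>1$ rules out approach to the pole $s=1$, while at most linear growth prevents $|u|$ from blowing up in finite time.

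The pointwise analytic input on $L_m$ is elementary from the series \eqref{L_m_serie}: for $\sigma=\re s>1$,
\[
|L_m(s)|\leq \zeta(\sigma),\qquad |L_m(s)-1|\leq \zeta(\sigma)-1,
\]
which yields $|\ima L_m(s)|\leq \zeta(\sigma)-1$ and $\re L_m(s)\geq 2-\zeta(\sigma)$. Combining these with condition \eqref{cond_0}, Remark \ref{caso_general_VDL}, and the definition \eqref{sigma00} of $\sigma_0$, as soon as $u_1(s,x)\geq I_1>\max\{1,\sigma_0(L_m)\}$ one has
\[
0\leq \re L_m(u) \leq \zeta(I_1), \qquad |\ima L_m(u)|\leq \zeta(I_1)-1,
\]
and moreover $\re L_m(u)\geq 2-\zeta(I_1)$, using that $\zeta$ is strictly decreasing on $(1,\infty)$.

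The bootstrap is the heart of the argument. Let $T^*\leq\infty$ be the maximal time of existence and define
\[
\tau:=\sup\set{t\in[0,T^*)\,:\, u_1(s,x)\geq I_1 \text{ for all } s\in[0,t],\, x\in\R^d}.
\]
Since $u_1(0,\cdot)\geq I_1$ and $u_1\in C([0,T^*);L^\infty)$, we have $\tau>0$. On $[0,\tau]$, taking real parts in Duhamel and using $\re L_m(u)\geq 0$ together with the positivity and unit mass of the heat kernel gives $u_1(t,x)\geq e^{t\Delta}g_1(x)\geq I_1$. If $\tau<T^*$, then $u_1(\tau,\cdot)\geq I_1>\sigma_0(L_m)$ uniformly in $x$; continuity of $u_1$ in $L^\infty$ produces $\varepsilon>0$ on which $u_1>\sigma_0(L_m)$, hence $\re L_m(u)\geq 0$, and a second application of Duhamel on $[0,\tau+\varepsilon]$ again yields $u_1\geq I_1$ there, contradicting maximality. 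Therefore $\tau=T^*$, and plugging the sharp bounds on $\re L_m(u)$ and $\ima L_m(u)$ from the previous paragraph into the real and imaginary parts of the Duhamel formula, together with $I_j\leq e^{t\Delta}g_j\leq S_j$, produces \eqref{des_1}--\eqref{des_2}.

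The main subtlety I expect lies in the $L^\infty$ bootstrap step, since we do not have an a priori pointwise maximum principle for merely bounded continuous (non-smooth) solutions; this is precisely why we argue via the Duhamel integral equation and the positivity of the heat kernel, rather than invoking a parabolic comparison principle directly on the PDE.
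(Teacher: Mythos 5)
Your argument is correct, and it takes a genuinely different route from the paper's. The paper obtains the a priori bound $u_1\geq I_1$ and the affine-in-$t$ estimates \eqref{des_1}--\eqref{des_2} via the invariant-domain machinery of Weinberger and Guo--Ninomiya--Shimojo--Yanagida (Lemmas \ref{lem:lema1}--\ref{lem:lema2}), filtered through the comparison principle (Lemma \ref{comparisonprinciple} and Corollaries \ref{comparisonprinciple2}--\ref{comparisonprinciple3}): one compares $u_1$ and $u_2$ against constant and affine solutions $V(t)$ of scalar ODEs and reads off invariance of half-planes. You instead work entirely in the mild (Duhamel) formulation, using only positivity and unit $L^1$-mass of $K_t$ together with a continuity bootstrap, and you get the same pointwise control of $L_m$ ($\re L_m(s)\geq 2-\zeta(\re s)$, $\re L_m(s)\leq \zeta(\re s)$, $|\ima L_m(s)|\leq \zeta(\re s)-1$ for $\re s>1$) by the triangle inequality on the Dirichlet series; this is precisely the content of Lemma \ref{lem:Repos} and the estimates \eqref{eqn:realpart}--\eqref{eqn:imagpart}. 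What your route buys is that you never leave the fixed-point framework in which Theorem \ref{LDirichlet} lives, so there is no hidden appeal to a parabolic maximum principle for merely mild solutions, the subtlety you flag at the end. What the paper's route buys is uniformity: the same invariant-domain toolkit is reused verbatim for Theorems \ref{teo:globalReal}, \ref{ThmZS} and \ref{ThmC5}, where the relevant invariant sets are intervals and shrinking discs that do not translate as cleanly into Duhamel positivity statements.

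One small imprecision to fix: the assertion ``$\tau>0$ since $u_1(0,\cdot)\geq I_1$ and $u_1\in C([0,T^*);L^\infty)$'' is not, as stated, a proof, because if $\inf_x g_1(x)=I_1$ is attained then continuity alone does not prevent $u_1$ from dipping below $I_1$ for arbitrarily small $t>0$. The repair is exactly the mechanism you already deploy at time $\tau$: the open hypothesis driving the bootstrap is $u_1>\sigma_0(L_m)$, which by \eqref{cond_0} is separated from the closed target $u_1\geq I_1$ by the positive gap $I_1-\max\{1,\sigma_0(L_m)\}$. Continuity gives $u_1>\sigma_0$ on some $[0,\delta]$; on that interval $\re L_m(u(s,\cdot))\geq 0$, and Duhamel plus kernel positivity then returns $u_1(t,\cdot)\geq e^{t\Delta}g_1\geq I_1$ for $t\in[0,\delta]$, so $\tau\geq\delta>0$. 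Keeping the open hypothesis ($u_1>\sigma_0$) distinct from the closed conclusion ($u_1\geq I_1$) makes both the $\tau>0$ step and the continuation step uniform and clean.
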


Some remarks are in order:

\begin{remark}
Condition \eqref{cond_0} is equivalent to $I_1\geq \sigma_0$ in the case of $L_m=\zeta$, since $\re \zeta(1+ it )$ takes negative values for some values of $t$. Additionally, condition \eqref{cond_0} is demanded to ensure that the initial datum is on the right of the possibly existing pole and the real part of $L_m$ is positive.
\end{remark}
\begin{remark}
The term of first lower bound $2-\zeta(I_1)$ in \eqref{I_S} can be replaced by some better bound of $\re(L_m(s))$, $s=s_1+is_2,$ when $s_1\geq I_1.$
\end{remark}

\begin{remark}
Although solutions are globally defined, notice that their sizes are growing in time. This is sometimes referred as ``infinite time blow up'', in the sense that the $L^\infty$ norm of the solution continuously grows to infinity as time evolves.   
\end{remark}

An interesting consequence of the previous result is the following:
\begin{cor}[The case of real-valued characters]\label{Cor:faible}
      Let $L_m$  be a Dirichlet $L$-function obtained from a real-valued character $\chi_m$ and $\lambda=1$. Let $g(x)=g_1(x)+ig_2(x)\in L^{\infty}(\R^d,\Com)\cap C(\R^d,\Com)$ a given initial datum. If  $I_1\geq \max\{1,\sigma_0(L_m)\}$ and $I_2>0$, then the local solution to \eqref{eqn:PDZN} is global in time and satisfies 
%    \begin{equation}\label{eqn:PDZ2}
%\begin{aligned}
%\partial_t u (t,x)= &~{}\Delta u (t,x)+  L_{m}(u(t,x)), \quad x\in\R^d, \quad t\ge 0,\\
%u(0,x) = &~{} g(x).
%\end{aligned}
%\end{equation}
%We have that $u(t,x)$ the solution of equation \eqref{eqn:PDZ2} is global and satisfies 
\begin{equation}\label{eq:vieja}
\max\{0,(2-\zeta(I_1))\}t+I_1\leq u_1(t,x)\leq \zeta(I_1)t+S_1,
\end{equation}
and
\begin{equation}\label{eq:nueva}
0 < u_2(t,x)\leq (\zeta(I_1)-1)t+S_2.
\end{equation}
\end{cor}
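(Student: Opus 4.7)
The plan is to leverage two ingredients: (i) Theorem \ref{teo:globalpos} combined with a perturbation argument to accommodate the equality case $I_1 = \max\{1,\sigma_0(L_m)\}$, and (ii) the reflection symmetry $L_m(\bar s) = \overline{L_m(s)}$ that holds for real-valued characters, which forces $\ima L_m$ to vanish on the real axis and enables a linearization argument for $u_2$.

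First, I would obtain global existence and the bound \eqref{eq:vieja} by applying Theorem \ref{teo:globalpos} to the perturbed datum $g^\varepsilon := g + \varepsilon$, for which $I_1(g^\varepsilon) = I_1 + \varepsilon > \max\{1,\sigma_0\}$ and condition \eqref{uniform_g} is trivially fulfilled (since $g^\varepsilon_1 - 1 \geq I_1 + \varepsilon - 1 > 0$). The resulting global solutions $u^\varepsilon$ satisfy \eqref{des_1}--\eqref{des_2} with $I_1$ replaced by $I_1 + \varepsilon$. Passage to the limit $\varepsilon \to 0^+$ is justified by continuity of $\zeta$ on $(1,\infty)$ together with the continuous dependence of the solution map from Theorem \ref{LDirichlet}, on each compact time interval.

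Second, for the strict positivity $u_2 > 0$, I would exploit the reality of $\chi_m$, which gives $\ima L_m(u_1) = 0$ whenever $u_1 \in \R$. The fundamental theorem of calculus combined with the Cauchy--Riemann relation $\partial_\tau \ima L_m = \re L_m'$ yields the linearization
\[
\ima L_m(u_1 + i u_2) \;=\; u_2 \int_0^1 \re L_m'(u_1 + i\theta u_2)\, d\theta \;=:\; u_2\, A(t,x).
\]
Since the bound in \eqref{eq:vieja} already guarantees $u_1(t,x) \geq I_1 > 1$, the series defining $L_m'(s) = -\sum_{n\geq 2} \chi_m(n)(\log n) n^{-s}$ converges absolutely on the half-plane $\re s \geq I_1$, which gives the uniform estimate
\[
|A(t,x)| \;\leq\; M \;:=\; \sum_{n\geq 2} \frac{\log n}{n^{I_1}} \;<\; +\infty.
\]
Thus $u_2$ satisfies the linear parabolic equation $\partial_t u_2 - \Delta u_2 = A(t,x)\, u_2$ with bounded coefficient. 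The standard comparison principle applied to $e^{Mt}u_2$ (or equivalently the Feynman--Kac representation) then produces the pointwise lower bound $u_2(t,x) \geq I_2\, e^{-Mt} > 0$, which gives the claimed strict positivity.

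Third, for the upper bound in \eqref{eq:nueva}, I would use that for $\re s \geq I_1 > 1$ the $n=1$ term does not contribute to the imaginary part, so $|\ima L_m(s)| \leq \zeta(I_1) - 1$, and compare $u_2$ with the affine supersolution $(\zeta(I_1) - 1)t + S_2$ exactly as in the proof of Theorem \ref{teo:globalpos}. The main technical obstacle is justifying the parabolic maximum principle on the unbounded domain $\R^d$; this requires uniform boundedness of $u_2$ on compact time intervals, which is guaranteed by the a priori linear-in-$t$ bounds obtained in the previous steps, so the argument closes.
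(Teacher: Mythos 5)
Your proof is correct, but for the central novel claim $u_2>0$ you take a genuinely different route from the paper. The paper's proof is one short paragraph: it invokes Theorem~\ref{teo:globalpos} to get global existence and the bounds \eqref{des_1}--\eqref{des_2}, and then, to improve the lower bound on $u_2$, it applies the invariant-domain machinery of Section~\ref{sec:2b} directly. Namely, it takes $H_2(u_1,u_2)=-u_2$, so that $D_2=\{u_2>0\}$ is the upper half plane; on $\partial D_2=\{u_2=0\}$ the reality of $\chi_m$ forces $L_m$ to be real-valued, hence $\frac{d}{dt}H_2(U(t))=-\ima L_m(U(t))=0\le 0$ there, so Lemma~\ref{lem:lema2} shows $D_2$ is invariant under the ODE flow, and Lemma~\ref{lem:lema1} transfers this to the PDE, giving $u_2(t,x)>0$ for all $t\ge 0$. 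Your argument instead linearizes $\ima L_m$ around the real axis, writing $\ima L_m(u_1+iu_2)=u_2\int_0^1\re L_m'(u_1+i\theta u_2)\,d\theta$ by the fundamental theorem of calculus and Cauchy--Riemann, so that $u_2$ solves a linear parabolic equation with bounded potential $A(t,x)$; a comparison (or Feynman--Kac) argument then yields $u_2\ge I_2 e^{-Mt}$. Both arguments are valid, but yours buys a quantitative lower bound $u_2\ge I_2 e^{-Mt}$ which the paper's topological invariance argument does not provide, and it also highlights the need for $I_1>1$ so that $M=\sum_{n\ge 2}(\log n)n^{-I_1}$ is finite. You also flag a real, if minor, gap in the paper: the corollary allows $I_1=\max\{1,\sigma_0(L_m)\}$ while Theorem~\ref{teo:globalpos} is stated (and proved) under the strict inequality $I_1>\max\{1,\sigma_0(L_m)\}$; your perturbation-and-limit argument addresses this, whereas the paper silently invokes the theorem without comment. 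One remaining caveat common to both you and the paper: when $\sigma_0(L_m)<1$ and $I_1=1$ exactly, the right-hand sides of \eqref{eq:vieja}--\eqref{eq:nueva} involve $\zeta(1)$ and are not finite, so the equality case is really only meaningful when $\max\{1,\sigma_0\}>1$ or $I_1>1$ --- a point your limiting argument makes visible but does not fully resolve.
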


\begin{remark}
Note that \eqref{eq:vieja} coincides with \eqref{des_1} obtained in the general case. However, \eqref{eq:nueva}, compared to \eqref{des_2}, is an improvement obtained from the additional conditions on $L_m$ (real-valued characters).
\end{remark}

It turns out that the previous global existence results can be improved if one assumes that the initial data is real-valued. Although simpler than the previous result, it is very enlightening to describe the role of the pole $s=1$ in the long time dynamics. Notice that no assumption \eqref{cond_0} is required, and $-2k$, $k=1,2,3\ldots$ are the trivial zeros of the zeta, exact solutions of \eqref{eqn:PDZN} in the case $L_m=\zeta$. Moreover, one has

\begin{remark}\label{signos}
It is well-known that in the case of $L_m=\zeta$, and $s_1\in\mathbb R$, one has for $s_1<0$
\[
\sgn\,\zeta(s_1)= \sgn \, \left(\sin\left(\frac\pi2s_1\right)\right). 
\]
In particular, $\zeta(s_1)<0$ in $(-2,0)$, $\zeta(s_1)>0$ in $(-4,-2)$, and so on. This is a consequence of the functional equation $\zeta(s)= 2^s \pi^{s-1} \Gamma(1-s) \zeta(1-s) \sin(\frac\pi2s)$.
\end{remark}

\begin{theorem}[Global well-posedness, zeta and real-valued case]\label{teo:globalReal}
    Consider $L_m=\zeta$, the classical zeta function, in equation \eqref{eqn:PDZN}. Assume now that the initial datum $g$ is real-valued and satisfies
    \begin{equation}\label{1p6_0}
    g\in L^{\infty}(\R^d)\cap C(\R^d), \quad \hbox{and} \quad \inf_{x\in \R^d}|g(x)-1|>0.
    \end{equation}
    Let
    \begin{equation}\label{I_S_new}
    S=\sup_{x\in\R^d}g(x),\quad  I=\inf_{x\in\R^d}g(x).
    \end{equation}
%    \begin{equation}\label{eqn:PDZR}
%\begin{aligned}
%\partial_t u (t,x)= &~{}\Delta u (t,x)+ \zeta(u(t,x)), \quad x\in\R^d, \quad t\ge 0,\\
%u(0,x) = &~{} g(x) \quad \hbox{given}.
%\end{aligned}
%\end{equation}
Then $u$ is global in time and the following alternative hold: 
\begin{enumerate}
\item[$(i)$] If $I>1$, then for $t\geq 0,$
\begin{equation}\label{1p6_1}
t+I\leq u(t,x)\leq \zeta(I)t+S.
\end{equation}
\item[$(ii)$] If $I<1$, we have that 
\begin{equation}\label{1p6_2}
-2k_1\leq u(t,x)\leq -2k_2,
\end{equation}
where $k_1$ and $k_2$ are such that:
%\[
%k_1:=\inf \left\{k\in \N ~ \big| ~ -2k\leq I\right\} \quad and\quad k_2:=\sup\left\{k\in \N ~ \big| ~ -2k\geq S\right\}. 
%\] 
\begin{equation}\label{k1k2}
\begin{aligned}
-2 k_1:= &~{} \max \left\{-2k \leq I ~ \big|  ~ k\in \N \right\} \\
 -2 k_2:=&~{}  \begin{cases} \inf  \left\{-2k \geq S  ~ \big| ~ k \in \N \right\} , \quad S\leq  -2,\\
 S, \quad S>-2 .
 \end{cases} 
\end{aligned}
\end{equation}
\item[$(iii)$] Additionally, if $I<1$ and if $n_1$ and $n_2$ are the unique positive integers such that $I\in (-4n_1,-4n_1+4)$, and $S\in (-4n_2,-4n_2+4)$ respectively, then
\begin{equation}\label{1p6_3}
-4n_1+2\leq \liminf_{t\to \infty }u(t,x)\leq \limsup_{t\to \infty }u(t,x)\leq -4n_2+2.
\end{equation}
\end{enumerate}
\end{theorem}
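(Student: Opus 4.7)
The plan is to reduce everything to one-dimensional ODE comparison. Since $\zeta$ maps $\R\setminus\{1\}$ into $\R$ and the heat semigroup preserves realness, uniqueness in Theorem \ref{LDirichlet} applied to $u$ and $\overline u$ forces $u$ to be real-valued. The hypothesis \eqref{1p6_0} together with continuity of $g$ on the connected set $\R^d$ places $g$ entirely above or entirely below $1$, giving the dichotomy between (i) and (ii)--(iii). The central observation is that every ODE trajectory $y(t)$ of $\dot y=\zeta(y)$ is a spatially constant solution of \eqref{eqn:PDZN}, so the standard parabolic comparison for bounded continuous heat-flow solutions yields $\underline y(t)\le u(t,x)\le \bar y(t)$ whenever $\underline y(0)\le g\le \bar y(0)$, provided $u$ and the trajectories remain in a compact subset of $\R\setminus\{1\}$ where $\zeta$ is Lipschitz.

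For (i), take ODE trajectories $\underline y,\bar y$ with $\underline y(0)=I$ and $\bar y(0)=S$. On $(1,\infty)$, $\zeta$ is smooth and strictly decreasing from $+\infty$ to $1$, with $1\le \zeta(y)\le \zeta(I)$ for every $y\ge I$. Both trajectories are nondecreasing and remain $\ge I>1$, and integrating the two elementary bounds yields $\underline y(t)\ge t+I$ and $\bar y(t)\le \zeta(I)t+S$, which gives \eqref{1p6_1}. The lower bound $u\ge t+I>1$ bars the pole alternative in \eqref{dicotomia}, the upper bound bars the blow-up alternative on every compact time interval, so $u$ extends globally.

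Parts (ii)--(iii) exploit that every trivial zero $-2k$ is a stationary (constant) solution. The lower bound $u\ge -2k_1$ comes directly from $g\ge I\ge -2k_1$ and comparison with the constant $-2k_1$. For the upper bound in (ii): if $S\le -2$ the constant $-2k_2\ge S$ works directly as a supersolution; if $-2<S<1$, Remark \ref{signos} gives $\zeta<0$ on $(-2,1)$, so the ODE trajectory $\bar y$ starting at $S$ decreases monotonically, stays in $[-2,S]$, and comparison yields $u\le \bar y\le S=-2k_2$. Either way $u$ is trapped in $[-2k_1,-2k_2]\subset(-\infty,1)$, so \eqref{dicotomia} is again excluded and $u$ is global. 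For (iii) one pushes the same ODE comparison to $t\to\infty$: by Remark \ref{signos}, on each interval $(-4n,-4n+4)$ the sign of $\zeta$ switches from $+$ to $-$ across the trivial zero $-4n+2$, which is therefore a stable attractor for the whole interval. Hence $\underline y(t)\to -4n_1+2$ and $\bar y(t)\to -4n_2+2$, and passing to limits in $\underline y(t)\le u(t,x)\le \bar y(t)$ yields \eqref{1p6_3}.

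The main technical obstacle is a clean implementation of the parabolic comparison principle on the unbounded domain $\R^d$ for a nonlinearity that is only locally Lipschitz away from $s=1$. Writing $w=u-y$ gives a linear heat equation $\partial_t w-\Delta w=c(t,x)w$ with potential $c(t,x)=\int_0^1 \zeta'(sy+(1-s)u)\,ds$, which is bounded because $u$ and $y$ are a priori confined to a fixed compact subset of $\R\setminus\{1\}$; one then invokes a Phragm\'en--Lindel\"of-type maximum principle for bounded continuous solutions with bounded coefficients on $\R^d$. This has to be coupled with a continuation argument that feeds the resulting bounds back to block both alternatives in \eqref{dicotomia}, thereby upgrading the local solution provided by Theorem \ref{LDirichlet} to a global one on $[0,\infty)$.
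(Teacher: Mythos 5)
Your proof is correct and follows the same overall strategy as the paper: reduce to ODE comparison with spatially constant super-/subsolutions, exploit the monotonicity of the scalar autonomous flow $\dot y=\zeta(y)$ on each interval between consecutive trivial zeros, and feed the resulting a priori bounds back through the blow-up alternative in Theorem~\ref{LDirichlet} to conclude global existence. Parts (i)--(iii) are argued essentially as in the paper, and the attractor analysis in (iii) matches the paper's Lemma~\ref{convergenceU}.

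The one genuine point of divergence is how the comparison principle itself is justified. You propose linearizing, writing $w=u-y$ and $\partial_t w-\Delta w=c(t,x)w$ with bounded potential $c=\int_0^1\zeta'(sy+(1-s)u)\,ds$, and invoking a Phragm\'en--Lindel\"of-type maximum principle for bounded solutions on $\R^d$. The paper instead builds its comparison tools (Lemma~\ref{comparisonprinciple} and Corollaries~\ref{comparisonprinciple2}--\ref{comparisonprinciple3}) on Weinberger's invariant-set lemma for weakly coupled parabolic systems (Lemmas~\ref{lem:lema1}--\ref{lem:lema2}, following \cite{Wein,GNHY}), which handles directly the sets $\{H(t,u_1,u_2)<0\}$ without passing through the linearization. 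Both routes are standard and yield identical conclusions; the Weinberger route is tailored to complex-valued (i.e., $\R^2$-valued) systems and is also what the paper uses for Theorems~\ref{teo:globalpos} and~\ref{ThmZS}, so adopting it keeps the machinery uniform across the paper, whereas your linearization argument is arguably more self-contained in the real scalar case treated here. Two additional remarks you make explicitly -- that uniqueness forces the solution to stay real when $g$ is real, and that the hypothesis $\inf_x|g(x)-1|>0$ plus connectedness puts $g$ entirely on one side of $s=1$ -- are used tacitly in the paper, so spelling them out is a small improvement in exposition. Finally, when you quote Remark~\ref{signos} for $\zeta<0$ on $(-2,1)$, note that the remark only covers $s_1<0$; the extension to $[0,1)$ (where $\zeta(0)=-\tfrac12$ and $\zeta<0$) is exactly what the paper supplies in the remark following Lemma~\ref{convergenceU}, and should be cited or proved rather than folded into Remark~\ref{signos}.
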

See Figures \ref{fig1} and \ref{fig2} for additional details.

\begin{figure}[htb]
\tikzset{every picture/.style={line width=0.75pt}} %set default line width to 0.75pt        
\begin{center}
\begin{minipage}{0.4\textwidth}
    
    \begin{tikzpicture}[x=0.75pt,y=0.75pt,yscale=-1,xscale=1,scale=0.4,midway]

    \draw [line width=0.75] (48,124.6) -- (640,124.6);
    \draw [shift={(643,124.6)}, rotate=180] [fill={rgb, 255:red, 0; green, 0; blue, 0 }  ][line width=0.08]  [draw opacity=0] (8.93,-4.29) -- (0,0) -- (8.93,4.29) -- cycle;
    
    \draw [draw opacity=0][line width=0.75] (129.57,154.59) .. controls (129.38,154.6) and (129.19,154.6) .. (129,154.6) .. controls (112.43,154.6) and (99,141.17) .. (99,124.6) .. controls (99,108.03) and (112.43,94.6) .. (129,94.6) -- (129,124.6) -- cycle;
    \draw [color={rgb, 255:red, 0; green, 0; blue, 0 }  ,draw opacity=1 ][line width=0.75] (129.57,154.59) .. controls (129.38,154.6) and (129.19,154.6) .. (129,154.6) .. controls (112.43,154.6) and (99,141.17) .. (99,124.6) .. controls (99,108.03) and (112.43,94.6) .. (129,94.6);
    
    \draw [draw opacity=0][line width=0.75] (329.57,154.59) .. controls (329.38,154.6) and (329.19,154.6) .. (329,154.6) .. controls (312.43,154.6) and (299,141.17) .. (299,124.6) .. controls (299,108.03) and (312.43,94.6) .. (329,94.6) .. controls (329,94.6) and (329,94.6) .. (329,94.6) -- (329,124.6) -- cycle;
    \draw [color={rgb, 255:red, 0; green, 0; blue, 0 }  ,draw opacity=1 ][line width=0.75] (329.57,154.59) .. controls (329.38,154.6) and (329.19,154.6) .. (329,154.6) .. controls (312.43,154.6) and (299,141.17) .. (299,124.6) .. controls (299,108.03) and (312.43,94.6) .. (329,94.6) .. controls (329,94.6) and (329,94.6) .. (329,94.6);
    
    \draw [draw opacity=0][line width=0.75] (268.55,94.6) .. controls (268.74,94.6) and (268.93,94.6) .. (269.12,94.6) .. controls (285.69,94.67) and (299.07,108.15) .. (299,124.72) .. controls (298.93,141.29) and (285.45,154.67) .. (268.88,154.6) -- (269,124.6) -- cycle;
    \draw [color={rgb, 255:red, 0; green, 0; blue, 0 }  ,draw opacity=1 ][line width=0.75] (268.55,94.6) .. controls (268.74,94.6) and (268.93,94.6) .. (269.12,94.6) .. controls (285.69,94.67) and (299.07,108.15) .. (299,124.72) .. controls (298.93,141.29) and (285.45,154.67) .. (268.88,154.6);
    
    \draw [draw opacity=0][line width=0.75] (516.55,94.6) .. controls (516.74,94.6) and (516.93,94.6) .. (517.12,94.6) .. controls (533.69,94.67) and (547.07,108.15) .. (547,124.72) .. controls (546.93,141.29) and (533.45,154.67) .. (516.88,154.6) .. controls (516.88,154.6) and (516.88,154.6) .. (516.88,154.6) -- (517,124.6) -- cycle;
    \draw [color={rgb, 255:red, 0; green, 0; blue, 0 }  ,draw opacity=1 ][line width=0.75] (516.55,94.6) .. controls (516.74,94.6) and (516.93,94.6) .. (517.12,94.6) .. controls (533.69,94.67) and (547.07,108.15) .. (547,124.72) .. controls (546.93,141.29) and (533.45,154.67) .. (516.88,154.6) .. controls (516.88,154.6) and (516.88,154.6) .. (516.88,154.6);
    
    \draw [color={rgb, 255:red, 208; green, 2; blue, 27 }  ,draw opacity=1 ][line width=0.75] (100,124.8) -- (192,124.8);
    \draw [shift={(196,124.8)}, rotate=180] [fill={rgb, 255:red, 208; green, 2; blue, 27 }  ,fill opacity=1 ][line width=0.08]  [draw opacity=0] (11.61,-5.58) -- (0,0) -- (11.61,5.58) -- cycle;
    
    \draw [color={rgb, 255:red, 208; green, 2; blue, 27 }  ,draw opacity=1 ][line width=0.75] (300,124.8) -- (392,124.8);
    \draw [shift={(396,124.8)}, rotate=180] [fill={rgb, 255:red, 208; green, 2; blue, 27 }  ,fill opacity=1 ][line width=0.08]  [draw opacity=0] (11.61,-5.58) -- (0,0) -- (11.61,5.58) -- cycle;
    
    \draw [color={rgb, 255:red, 208; green, 2; blue, 27 }  ,draw opacity=1 ][line width=0.75] (297,124.8) -- (206,124.8);
    \draw [shift={(202,124.8)}, rotate=360] [fill={rgb, 255:red, 208; green, 2; blue, 27 }  ,fill opacity=1 ][line width=0.08]  [draw opacity=0] (11.61,-5.58) -- (0,0) -- (11.61,5.58) -- cycle;
    
    \draw [color={rgb, 255:red, 208; green, 2; blue, 27 }  ,draw opacity=1 ][line width=0.75] (546,124.8) -- (407,124.8);
    \draw [shift={(403,124.8)}, rotate=360] [fill={rgb, 255:red, 208; green, 2; blue, 27 }  ,fill opacity=1 ][line width=0.08]  [draw opacity=0] (11.61,-5.58) -- (0,0) -- (11.61,5.58) -- cycle;
    
    \draw [draw opacity=0][line width=0.75] (68.55,94.6) .. controls (68.74,94.6) and (68.93,94.6) .. (69.12,94.6) .. controls (85.69,94.67) and (99.07,108.15) .. (99,124.72) .. controls (98.93,141.29) and (85.45,154.67) .. (68.88,154.6) -- (69,124.6) -- cycle;
    \draw [color={rgb, 255:red, 0; green, 0; blue, 0 }  ,draw opacity=1 ][line width=0.75] (68.55,94.6) .. controls (68.74,94.6) and (68.93,94.6) .. (69.12,94.6) .. controls (85.69,94.67) and (99.07,108.15) .. (99,124.72) .. controls (98.93,141.29) and (85.45,154.67) .. (68.88,154.6);
    
    \draw [color={rgb, 255:red, 208; green, 2; blue, 27 }  ,draw opacity=1 ][line width=0.75] (97,124.8) -- (48,124.8);
    \draw [shift={(44,124.8)}, rotate=360] [fill={rgb, 255:red, 208; green, 2; blue, 27 }  ,fill opacity=1 ][line width=0.08]  [draw opacity=0] (11.61,-5.58) -- (0,0) -- (11.61,5.58) -- cycle;
    
    \draw [fill={rgb, 255:red, 208; green, 2; blue, 27 }  ,fill opacity=1 ] (195,124.7) circle (2);
    \draw [fill={rgb, 255:red, 208; green, 2; blue, 27 }  ,fill opacity=1 ] (396,124.7) circle (2);
    \draw [fill={rgb, 255:red, 0; green, 0; blue, 0 }  ,fill opacity=1 ] (95,124.7) circle (2);
    \draw [fill={rgb, 255:red, 0; green, 0; blue, 0 }  ,fill opacity=1 ] (295,124.7) circle (2);
    
    \draw (370,150) node [anchor=north west][inner sep=0.75pt] {\tiny $-2$};
    \draw (270,150) node [anchor=north west][inner sep=0.75pt] {\tiny $-4$};
    \draw (170,150) node [anchor=north west][inner sep=0.75pt] {\tiny $-6$};
    \draw (70,150) node [anchor=north west][inner sep=0.75pt] {\tiny $-8$};
    \draw (535,150) node [anchor=north west][inner sep=0.75pt] {\tiny $1$};
    \draw (480,150) node [anchor=north west][inner sep=0.75pt] {\tiny $0$};
    \draw (615,130) node [anchor=north west][inner sep=0.75pt] {$\mathbb{R}$};
\end{tikzpicture}
\end{minipage}\hspace{1cm}
\begin{minipage}{0.4\textwidth}
 
\begin{tikzpicture}[x=0.75pt,y=0.75pt,yscale=-1,xscale=1,scale=0.4]
%uncomment if require: \path (0,453); %set diagram left start at 0, and has height of 453

%Curve Lines [id:da4767431026201303] 
\draw [color={rgb, 255:red, 155; green, 155; blue, 155 }  ,draw opacity=1 ][line width=0.75]    (3,277) .. controls (15.38,267.71) and (11.58,204.88) .. (90,207) .. controls (168.42,209.12) and (140.06,383.25) .. (205,394.8) .. controls (269.94,406.35) and (256,217) .. (307,228) .. controls (358,239) and (342.02,328.21) .. (378,335) .. controls (413.98,341.79) and (447.02,244.03) .. (477,236) .. controls (506.98,227.97) and (515.84,371.71) .. (542,370) .. controls (568.16,368.29) and (617.17,276.82) .. (622,273.2) ;
%Straight Lines [id:da5070042442908465] 
\draw [color={rgb, 255:red, 208; green, 2; blue, 27 }  ,draw opacity=1 ][line width=0.75]    (0,200) -- (651,200.2) ;
%Straight Lines [id:da6732847302508222] 
\draw [color={rgb, 255:red, 208; green, 2; blue, 27 }  ,draw opacity=1 ][line width=0.75]    (0,400) -- (651,400.2) ;
%Straight Lines [id:da30107643680422846] 
\draw    (300.99,101) -- (300,433.8) (304.84,151.01) -- (296.84,150.99)(304.69,201.01) -- (296.69,200.99)(304.54,251.01) -- (296.54,250.99)(304.4,301.01) -- (296.4,300.99)(304.25,351.01) -- (296.25,350.99)(304.1,401.01) -- (296.1,400.99) ;
\draw [shift={(301,98)}, rotate = 90.17] [fill={rgb, 255:red, 0; green, 0; blue, 0 }  ][line width=0.08]  [draw opacity=0] (8.93,-4.29) -- (0,0) -- (8.93,4.29) -- cycle    ;
%Straight Lines [id:da40432880636038426] 
\draw    (-2,150.2) -- (650,150.2) ;
\draw [shift={(652,150.2)}, rotate = 180] [fill={rgb, 255:red, 0; green, 0; blue, 0 }  ][line width=0.08]  [draw opacity=0]   (10.93,-3.29) --   (0,0) -- (10.93,3.29) -- cycle  ;

% Text Node
\draw (627,110) node [anchor=north west][inner sep=0.75pt]    {$\mathbb{R}$};
% Text Node
\draw (310,190.2) node [anchor=north west][inner sep=0.75pt]    {\tiny ${-2}$};
% Text Node
\draw (310,241.2) node [anchor=north west][inner sep=0.75pt]    {\tiny ${-4}$};
% Text Node
\draw (310,290.2) node [anchor=north west][inner sep=0.75pt]    {\tiny ${-6}$};
% Text Node
\draw (310,340.2) node [anchor=north west][inner sep=0.75pt]    {\tiny ${-8}$};
% Text Node
\draw (310,390.2) node [anchor=north west][inner sep=0.75pt]    {\tiny ${-10}$};
% Text Node
\draw (60,88.4) node [anchor=north west][inner sep=0.75pt]    {$\textcolor[rgb]{0.61,0.61,0.61}{\lim\limits_{t\ \rightarrow \infty }}\textcolor[rgb]{0.61,0.61,0.61}{u}\textcolor[rgb]{0.61,0.61,0.61}{(}\textcolor[rgb]{0.61,0.61,0.61}{t,x}\textcolor[rgb]{0.61,0.61,0.61}{)}$};

\end{tikzpicture}
\end{minipage}
\end{center}
\caption{Left: Sets considered in Theorem \ref{Domaint} $(ii)$. Red points are sinks, while black ones sources. Right: A scheme of the long time behavior of $u$ in  Theorem \ref{Domaint} $(ii)$ in the case $g(x)\in (-10,-2)$.}\label{fig1}
\end{figure}
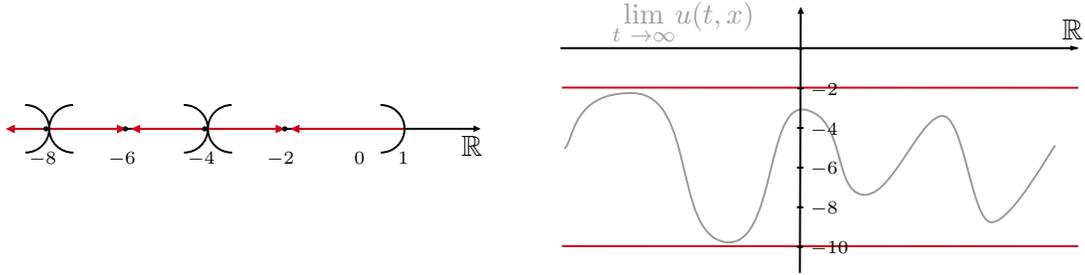

\begin{remark}
From Section 2 of \cite{BB} one  can see that $-4n_i+2$, $i\in\{1,2\}$ represents a trivial stable zero while $-4n_i$, $i\in\{1,2\}$  a  trivial unstable zero. In particular, if $n_1=n_2$, one gets for each $x\in \mathbb{R}^d$,
\begin{equation*}
  \lim_{t\to \infty }u(t,x)=-4n_1+2.
\end{equation*}
\end{remark}

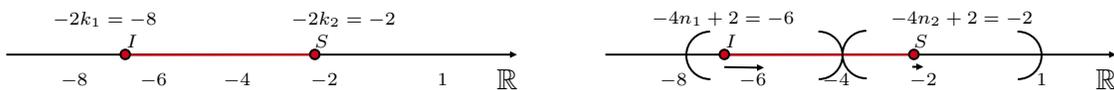
\begin{figure}

        \tikzset{every picture/.style={line width=0.75pt}} %set default line width to 0.75pt  
      \begin{tikzpicture}[x=0.75pt,y=0.75pt,yscale=-1,xscale=1,scale=0.4]
    %uncomment if require: \path (0,453); %set diagram left start at 0, and has height of 453
  
%uncomment if require: \path (0,454); %set diagram left start at 0, and has height of 454

%Straight Lines [id:da6461721518215542] 
\draw [line width=0.75]    (0,249.6) -- (641,249.6) ;
\draw [shift={(644,249.6)}, rotate = 180] [fill={rgb, 255:red, 0; green, 0; blue, 0 }  ][line width=0.08]  [draw opacity=0] (8.93,-4.29) -- (0,0) -- (8.93,4.29) -- cycle    ;
%Shape: Boxed Line [id:dp516957224887697] 
\draw [color={rgb, 255:red, 208; green, 2; blue, 27 }  ,draw opacity=1 ][line width=0.75]    (152,250.12) -- (389,249.8) ;
%Shape: Circle [id:dp576039295001854] 
\draw  [fill={rgb, 255:red, 208; green, 2; blue, 27 }  ,fill opacity=1 ] (144,249.4) .. controls (144,246.09) and (146.69,243.4) .. (150,243.4) .. controls (153.31,243.4) and (156,246.09) .. (156,249.4) .. controls (156,252.71) and (153.31,255.4) .. (150,255.4) .. controls (146.69,255.4) and (144,252.71) .. (144,249.4) -- cycle ;
%Shape: Circle [id:dp5781586239820675] 
\draw  [fill={rgb, 255:red, 208; green, 2; blue, 27 }  ,fill opacity=1 ] (383,249.4) .. controls (383,246.09) and (385.69,243.4) .. (389,243.4) .. controls (392.31,243.4) and (395,246.09) .. (395,249.4) .. controls (395,252.71) and (392.31,255.4) .. (389,255.4) .. controls (385.69,255.4) and (383,252.71) .. (383,249.4) -- cycle ;

% Text Node
\draw (380,270) node [anchor=north west][inner sep=0.75pt]    {\tiny $-2$};
% Text Node
\draw (270,270) node [anchor=north west][inner sep=0.75pt]    {\tiny $-4$};
% Text Node
\draw (165,270) node [anchor=north west][inner sep=0.75pt]    {\tiny $-6$};
% Text Node
\draw (65,270) node [anchor=north west][inner sep=0.75pt]    {\tiny $-8$};
% Text Node
\draw (540,270) node [anchor=north west][inner sep=0.75pt]    {\tiny $1$};
% Text Node
\draw (148,220.4) node [anchor=north west][inner sep=0.75pt]    {\tiny $I$};
% Text Node
\draw (385,221.4) node [anchor=north west][inner sep=0.75pt]    {\tiny $S$};
% Text Node
\draw (55,191.4) node [anchor=north west][inner sep=0.75pt]    {\tiny $-2k_{1} =-8$};
% Text Node
\draw (356,191.4) node [anchor=north west][inner sep=0.75pt]    {\tiny $-2k_{2} =-2$};
% Text Node
\draw (614.6,264.4) node [anchor=north west][inner sep=0.75pt]    {$\mathbb{R}$};
\end{tikzpicture}\hspace{1cm}
\begin{tikzpicture}[x=0.75pt,y=0.75pt,yscale=-1,xscale=1,scale=0.4]
%Straight Lines [id:da9841492308862272] 
\draw [line width=0.75]    (0,249.6) -- (641,249.6) ;
\draw [shift={(644,249.6)}, rotate = 180] [fill={rgb, 255:red, 0; green, 0; blue, 0 }  ][line width=0.08]  [draw opacity=0] (8.93,-4.29) -- (0,0) -- (8.93,4.29) -- cycle    ;
%Shape: Boxed Line [id:dp5657183900498219] 
\draw [color={rgb, 255:red, 208; green, 2; blue, 27 }  ,draw opacity=1 ][line width=0.75]    (152,250.12) -- (389,249.8) ;
%Shape: Circle [id:dp21305613110816468] 
\draw  [fill={rgb, 255:red, 208; green, 2; blue, 27 }  ,fill opacity=1 ] (144,249.4) .. controls (144,246.09) and (146.69,243.4) .. (150,243.4) .. controls (153.31,243.4) and (156,246.09) .. (156,249.4) .. controls (156,252.71) and (153.31,255.4) .. (150,255.4) .. controls (146.69,255.4) and (144,252.71) .. (144,249.4) -- cycle ;
%Shape: Circle [id:dp9921184012839073] 
\draw  [fill={rgb, 255:red, 208; green, 2; blue, 27 }  ,fill opacity=1 ] (383,249.4) .. controls (383,246.09) and (385.69,243.4) .. (389,243.4) .. controls (392.31,243.4) and (395,246.09) .. (395,249.4) .. controls (395,252.71) and (392.31,255.4) .. (389,255.4) .. controls (385.69,255.4) and (383,252.71) .. (383,249.4) -- cycle ;
%Shape: Arc [id:dp11443486753542986] 
\draw  [draw opacity=0][line width=0.75]  (132.57,280.59) .. controls (132.38,280.6) and (132.19,280.6) .. (132,280.6) .. controls (115.43,280.6) and (102,267.17) .. (102,250.6) .. controls (102,234.03) and (115.43,220.6) .. (132,220.6) -- (132,250.6) -- cycle ; \draw  [color={rgb, 255:red, 0; green, 0; blue, 0 }  ,draw opacity=1 ][line width=0.75]  (132.57,280.59) .. controls (132.38,280.6) and (132.19,280.6) .. (132,280.6) .. controls (115.43,280.6) and (102,267.17) .. (102,250.6) .. controls (102,234.03) and (115.43,220.6) .. (132,220.6) ;  
%Shape: Arc [id:dp14590213491353143] 
\draw  [draw opacity=0][line width=0.75]  (329.57,279.59) .. controls (329.38,279.6) and (329.19,279.6) .. (329,279.6) .. controls (312.43,279.6) and (299,266.17) .. (299,249.6) .. controls (299,233.03) and (312.43,219.6) .. (329,219.6) .. controls (329,219.6) and (329,219.6) .. (329,219.6) -- (329,249.6) -- cycle ; \draw  [color={rgb, 255:red, 0; green, 0; blue, 0 }  ,draw opacity=1 ][line width=0.75]  (329.57,279.59) .. controls (329.38,279.6) and (329.19,279.6) .. (329,279.6) .. controls (312.43,279.6) and (299,266.17) .. (299,249.6) .. controls (299,233.03) and (312.43,219.6) .. (329,219.6) .. controls (329,219.6) and (329,219.6) .. (329,219.6) ;  
%Shape: Arc [id:dp6931153464110213] 
\draw  [draw opacity=0][line width=0.75]  (268.55,220.6) .. controls (268.74,220.6) and (268.93,220.6) .. (269.12,220.6) .. controls (285.69,220.67) and (299.07,234.15) .. (299,250.72) .. controls (298.93,267.29) and (285.45,280.67) .. (268.88,280.6) -- (269,250.6) -- cycle ; \draw  [color={rgb, 255:red, 0; green, 0; blue, 0 }  ,draw opacity=1 ][line width=0.75]  (268.55,220.6) .. controls (268.74,220.6) and (268.93,220.6) .. (269.12,220.6) .. controls (285.69,220.67) and (299.07,234.15) .. (299,250.72) .. controls (298.93,267.29) and (285.45,280.67) .. (268.88,280.6) ;  
%Shape: Arc [id:dp5904280645853985] 
\draw  [draw opacity=0][line width=0.75]  (519.55,220.6) .. controls (519.74,220.6) and (519.93,220.6) .. (520.12,220.6) .. controls (536.69,220.67) and (550.07,234.15) .. (550,250.72) .. controls (549.93,267.29) and (536.45,280.67) .. (519.88,280.6) .. controls (519.88,280.6) and (519.88,280.6) .. (519.88,280.6) -- (520,250.6) -- cycle ; \draw  [color={rgb, 255:red, 0; green, 0; blue, 0 }  ,draw opacity=1 ][line width=0.75]  (519.55,220.6) .. controls (519.74,220.6) and (519.93,220.6) .. (520.12,220.6) .. controls (536.69,220.67) and (550.07,234.15) .. (550,250.72) .. controls (549.93,267.29) and (536.45,280.67) .. (519.88,280.6) .. controls (519.88,280.6) and (519.88,280.6) .. (519.88,280.6) ;  
%Straight Lines [id:da3427417387588372] 
\draw    (150,265.4) -- (193.02,266.05) -- (197,266.02) ;
\draw [shift={(200,266)}, rotate = 179.55] [fill={rgb, 255:red, 0; green, 0; blue, 0 }  ][line width=0.08]  [draw opacity=0] (8.93,-4.29) -- (0,0) -- (8.93,4.29) -- cycle    ;
%Straight Lines [id:da8025384593136264] 
\draw    (387.02,265.05) -- (398,265.01) ;
\draw [shift={(401,265)}, rotate = 179.78] [fill={rgb, 255:red, 0; green, 0; blue, 0 }  ][line width=0.08]  [draw opacity=0] (8.93,-4.29) -- (0,0) -- (8.93,4.29) -- cycle    ;

% Text Node
\draw (380,270) node [anchor=north west][inner sep=0.75pt]    {\tiny $-2$};
% Text Node
\draw (270,270) node [anchor=north west][inner sep=0.75pt]    {\tiny $-4$};
% Text Node
\draw (165,270) node [anchor=north west][inner sep=0.75pt]    {\tiny $-6$};
% Text Node
\draw (65,270) node [anchor=north west][inner sep=0.75pt]    {\tiny $-8$};
% Text Node
\draw (540,270) node [anchor=north west][inner sep=0.75pt]    {\tiny $1$};
% Text Node
\draw (148,220.4) node [anchor=north west][inner sep=0.75pt]    {\tiny $I$};
% Text Node
\draw (385,221.4) node [anchor=north west][inner sep=0.75pt]    {\tiny $S$};
% Text Node
\draw (55,191.4) node [anchor=north west][inner sep=0.75pt]    {\tiny $-4n_{1} +2=-6$};
% Text Node
\draw (356,191.4) node [anchor=north west][inner sep=0.75pt]    {\tiny $-4n_{2} +2=-2$};
% Text Node
\draw (614.6,264.4) node [anchor=north west][inner sep=0.75pt]    {$\mathbb{R}$};

\end{tikzpicture}

\caption{Left: Example of a real-valued initial condition $g$ is located on the region determined by $I$ and $S$, in this case, above -8 and below -2. Right: Evolution of the solution $u$ in the previous setting. It is shown in Theorem \ref{teo:globalReal} $(iii)$ that $\lim_{t\to+\infty} u$ will stay between $-6$ by below and $-2$ by above.}\label{fig2}
\end{figure}

%Now we discuss the stability of general Riemann zeros, seen as global exact solutions of the Riemann zeta flow.  Recall that from \cite{BB} stable zeros $z_0$ of the holomorphic Riemann zeta flow $s'(t)=\zeta(s(t))$ are characterized by the sign condition $\re \zeta'(z_0)<0.$
%
%\begin{theorem}[Asymptotic stability of stable zeros]\label{ThmZS}  Consider $L_m=\zeta$, the classical zeta function, in equation \eqref{eqn:PDZN}. Let $z_0$ be a stable zero of $\zeta$. There exists $\delta=\delta(\re\zeta'(z_0)) > 0$ such that for any bounded continuous initial datum $g$ in the disc $D(z_0, \delta)$, the associated solution $u$ is global in time and for each $x \in \mathbb{R}^d$,
%\begin{equation*}
%  \lim_{t\to \infty }u(t,x)=z_0.
%\end{equation*}
%\end{theorem}

Now we discuss the stability of general Riemann zeros, seen as global exact solutions of the Riemann zeta flow.  Recall that from \cite{BB} stable zeros $z_0$ of the holomorphic Riemann zeta flow $s'(t)=\zeta(s(t))$ are characterized by the sign condition $\re \zeta'(z_0)<0.$ In particular, this zero is nondegenerate.

\begin{theorem}[Asymptotic stability of stable zeros]\label{ThmZS}  Consider $L_m=\zeta$, the classical zeta function, in equation \eqref{eqn:PDZN}. Let $z_0$ be a stable zero of $\zeta$. There exists $\delta=\delta(\re\zeta'(z_0)) > 0$ such that for any bounded continuous initial datum $g$ in the disc $D(z_0, \delta)$, the associated solution $u$ is global in time and for each $x \in \mathbb{R}^d$,
\begin{equation*}
  \lim_{t\to \infty }u(t,x)=z_0.
\end{equation*}
\end{theorem}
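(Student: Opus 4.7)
The plan is to linearize the PDE around $z_0$ and exploit the spectral gap $a := \re\zeta'(z_0) < 0$ to obtain a contractive semigroup estimate in $L^\infty$. Writing $v(t,x) := u(t,x) - z_0$, the assumption $\zeta(z_0)=0$ and the holomorphy of $\zeta$ away from its unique pole $s=1$ give the Taylor expansion
\begin{equation*}
\zeta(z_0 + w) = \zeta'(z_0)\,w + R(w), \qquad |R(w)| \leq C|w|^2 \text{ for } |w| \leq r_0,
\end{equation*}
where $r_0 := |z_0 - 1|/2$ keeps us uniformly away from the pole. Then $v$ satisfies $\partial_t v = \Delta v + \zeta'(z_0) v + R(v)$ with $v(0,\cdot) = g - z_0$ and $\|v(0)\|_{L^\infty} \leq \delta$.

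The mechanism that gives stability is the scalar factorisation $e^{t(\Delta + \zeta'(z_0))} = e^{\zeta'(z_0)\, t}\,e^{t\Delta}$, valid precisely because $\zeta'(z_0)\in\Com$ is a constant. Combined with the standard $L^\infty$-contractivity of the heat semigroup, this yields the key bound
\begin{equation*}
\|e^{t(\Delta + \zeta'(z_0))}\|_{L^\infty \to L^\infty} \leq e^{at}.
\end{equation*}
Applying Duhamel's formula to the equation for $v$ and the remainder estimate, I would then obtain, as long as $M(s) := \|v(s)\|_{L^\infty} \leq r_0$ on $[0,t]$,
\begin{equation*}
M(t) \leq e^{at}\,\|v(0)\|_{L^\infty} + C\int_0^t e^{a(t-s)}\, M(s)^2\,ds.
\end{equation*}

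A standard continuity/bootstrap argument closes the estimate. Choosing $\delta < \min\{r_0,\,|a|/(4C)\}$, I would assume the running bootstrap $M(s) \leq 2\delta\, e^{as}$ on $[0,t]$, substitute into the Duhamel inequality, and use $\int_0^t e^{as}\,ds \leq 1/|a|$ (since $a<0$) to conclude
\begin{equation*}
M(t) \leq e^{at}\Big(\delta + \frac{4C\delta^2}{|a|}\Big) < 2\delta\,e^{at},
\end{equation*}
which, together with the time continuity provided by Theorem \ref{LDirichlet}, propagates the bootstrap bound for all times at which the local solution exists.

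To finish, I would invoke the blow-up alternative \eqref{dicotomia}: the a priori bound $|u(t,x) - z_0| \leq 2\delta \leq r_0$ simultaneously rules out $L^\infty$-blow-up and any approach to the pole $s=1$ (since $|u-1| \geq |z_0 - 1| - 2\delta > 0$), forcing $T^\ast = +\infty$. The exponential decay $\|u(t) - z_0\|_{L^\infty} \leq 2\delta\,e^{at}$ then delivers the claimed uniform-in-$x$ convergence $u(t,x) \to z_0$ as $t \to \infty$. The main point I expect to require care is the clean separation of the complex scalar $\zeta'(z_0)$ from the Laplacian in $L^\infty$: the stabilising factor $e^{at}$ emerges only because multiplication by the constant $e^{\zeta'(z_0)\,t}$ commutes with $e^{t\Delta}$ and has modulus $e^{at}$, while the nonlinear remainder $R(v)$ must genuinely satisfy a quadratic bound on a ball of radius $r_0$ so that the bootstrap closes without ever invading a neighbourhood of the pole.
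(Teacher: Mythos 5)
Your proposal is correct, and it takes a genuinely different route from the paper. The paper proves Theorem~\ref{ThmZS} via the invariant-set/comparison machinery of Section~\ref{sec:2b}: it builds the disc $D(z_0,\delta)$ as an invariant region for the ODE flow by computing $\tfrac{d}{dt}H_3$ on $\partial D(z_0,\delta)$ (getting $2\delta^2\re\zeta'(z_0)+O(\delta^3)<0$), transfers invariance to the PDE by Lemmas~\ref{lem:lema1} and~\ref{lem:lema2}, and then obtains convergence to $z_0$ by replacing the fixed disc with a shrinking family of radius $\delta e^{-t\delta^2/2}$. You instead linearize, factor the scalar $\zeta'(z_0)$ out of the semigroup, use $L^\infty$-contractivity of $e^{t\Delta}$ to get $\|e^{t(\Delta+\zeta'(z_0))}\|_{L^\infty\to L^\infty}\le e^{at}$ with $a=\re\zeta'(z_0)<0$, and close a quadratic Duhamel/bootstrap estimate. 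Both arguments are sound, but they buy different things. Your route gives an \emph{explicit exponential rate}: $\|u(t)-z_0\|_{L^\infty}\le 2\delta e^{at}$ with rate $a$ uniform in $\delta$, which is sharper than the paper's shrinking-disc rate $\delta^2/2$, a rate that degenerates as $\delta\to 0$. The paper's route, on the other hand, is more uniform with the toolkit already set up for the other theorems (global existence, blow-up), avoids Duhamel estimates entirely, and makes the geometric picture (invariant discs around a sink of the holomorphic flow) completely transparent.

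Two small fixes. First, to guarantee that the bootstrap hypothesis $M(s)\le 2\delta e^{as}\le 2\delta$ keeps $v$ inside the ball of radius $r_0$ where the remainder bound $|R(w)|\le C|w|^2$ holds, you need $2\delta\le r_0$, so the smallness condition should read $\delta<\min\{r_0/2,\,|a|/(4C)\}$ rather than $\delta<\min\{r_0,\,|a|/(4C)\}$. Second, you should say explicitly that the complex constant $\zeta'(z_0)$ acting on $v=v_1+iv_2\in\Com$ corresponds to the real $2\times 2$ matrix $aI+bJ$ (with $b=\ima\zeta'(z_0)$ and $J$ the rotation generator), so that $e^{t\zeta'(z_0)}$ has operator norm $e^{at}$ on $\R^2$; that is precisely why the factorization $e^{t(\Delta+\zeta'(z_0))}=e^{t\zeta'(z_0)}e^{t\Delta}$ gives decay rather than just boundedness. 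With these two remarks in place, your Duhamel argument is complete and yields the theorem with a quantitatively stronger conclusion than the paper's.
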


The previous result shows that every nontrivial zero of the Riemann zeta flow is asymptotically stable, showing that the behavior of the holomorphic flow can be extended to the space-time PDE setting. The case of unstable zeros requires more work since separatrices are present, see \cite{BB} for more details. Additionally, Theorem \ref{ThmZS} can be guessed by the natural identity (valid for sufficiently decaying data $u$ and $\partial_t u$)
\[
\frac{d}{dt}\frac12\int_{\mathbb R^d} |\partial_t u|^2 =- \int_{\mathbb R^d} |\nabla \partial_t u|^2 +\int_{\mathbb R^d}  \re L_m'(u)|\partial_t u|^2,
\]
obtained by formally taking time derivative in \eqref{eqn:PDZN}, testing against $\overline{\partial_t u},$ and recalling that $\re L_m'(u)=\re \zeta'(u)$ should not change sign in the vicinity of a zeta zero.

\medskip

Some important remarks concerning the zeros of the $\zeta$ as critical points of the Riemann zeta flow are necessary.

\begin{remark}[On stable and unstable critical points of the holomorphic flow]
From the holomorphic flow case \cite[Theorem 4.2]{BB}, the stability of trivial zeros is well-established. Indeed, (unstable) sources alternate with (stable) sinks, thanks to the identities
\[
    \zeta^{\prime}(-2 n)=(-1)^n \frac{n(2 n-1) !}{(2 \pi)^{2 n}} \zeta(2 n+1), \quad n \in \{1,2,\ldots\}.
\]
Nontrivial simple zeros always behave locally as focii \cite{BB}.  Clearly, their stability is complicated. In \cite{BB}, it was conjectured that there exist an infinite number of sources and an infinite number of sinks on the critical line. Apparently, sinks represent a small proportion compared to sources, as seen in the encoding of the first 500 critical zeros provided in \cite[Table 2]{BB}. We conjecture that the proportion of sinks vs. nontrivial zeros follows a logarithmic distribution of the form $P_n\sim a\log(n)+b $, for some $a,b\in \R$ and $n \leq n_0$ fixed sufficiently large. See Fig. \ref{Fig:cuatica} for additional details. In the case of nontrivial zeros outside the critical line but inside the critical strip, of the form $z=\frac12 +x+iy$, $x,y>0$, there is a symmetric zero $z=\frac12 - x+iy$, and both share the same ODE analysis \cite{BB}.
\end{remark}

\begin{center}
\begin{figure}[h]   
    \includegraphics[scale=0.4]{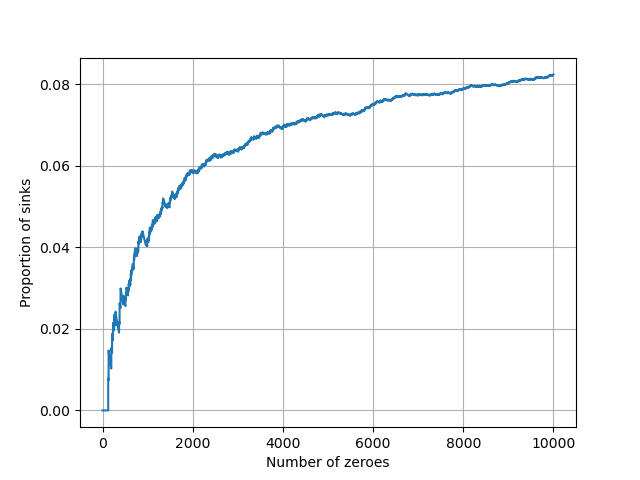}
    \includegraphics[scale=0.4]{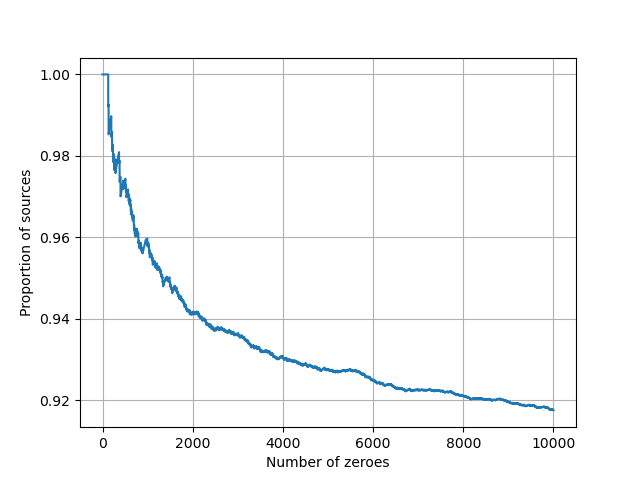}    
    \caption{\small Left: Distribution graph depicting the proportion sinks vs. nontrivial zeros among the first 10000 nontrivial zeros on the critical line. The $y$-axis describes the proportion 
    \[
    P_n = \dfrac{\# \text{sinks of the first } n \text{ nontrivial zeros}}{n},
    \]
    where $n$ represents the first $n$ nontrivial zeros (on the $x$-axis). Right: The proportion sources vs. nontrivial zeros.}\label{Fig:cuatica}
\end{figure}
\end{center}

Finally, our last result concerns the blow up problem. Notice that in this particular case we have chosen $\lambda=-1$, emulating the defocusing/focusing dichotomy presented in many nonlinear models. In this case, the defocusing/focusing character is instead related to an attractor character played by the (possible) pole at $s=1$ (see Fig. \ref{fig3}).

\begin{theorem}[Blow-up, real-value case]\label{ThmC5}
Let $L_m=\zeta$ and $\lambda=-1$. Let $g(x)\in L^{\infty}(\R^d)\cap C(\R^d)$ denote a real-valued initial datum, and consider $I$ and $S$ as in \eqref{I_S_new}.  If $I>1$, or if $-2 <I <S <1$, then the solution to
\begin{equation}\label{eqn:PDZR-}
\begin{aligned}
\partial_t u (t,x)= &~{}\Delta u (t,x)- \zeta(u(t,x)), \quad x\in\R^d, \quad t\ge 0,\\
u(0,x) = &~{} g(x),
\end{aligned}
\end{equation}
must blow-up in finite time. Finally, if $S<-2$, $u$ is global in time.
\end{theorem}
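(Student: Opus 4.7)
The plan is to use the parabolic comparison principle with spatially-constant super- and sub-solutions drawn from the ODE
\begin{equation*}
y'(t) = -\zeta(y(t)),
\end{equation*}
combined with the local well-posedness dichotomy of Theorem \ref{LDirichlet}. Let $y_-$ and $y_+$ denote the maximal real-valued solutions of this ODE with initial data $y_-(0) = I$ and $y_+(0) = S$ respectively. Since $-\zeta$ is locally Lipschitz on every compact subset of $\R \setminus \{1\}$, the standard parabolic comparison principle gives
\begin{equation*}
y_-(t) \leq u(t, x) \leq y_+(t), \qquad x \in \R^d,
\end{equation*}
on the common existence interval of $u$, $y_-$, and $y_+$.

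For $I > 1$: on $(1,\infty)$ one has $\zeta > 0$ and $\zeta(y) \sim 1/(y-1)$ as $y\to 1^+$, so $y_\pm$ strictly decrease and hit $1$ in finite times $T_\pm = \int_1^{y_\pm(0)} dz/\zeta(z)$, satisfying $T_- \leq T_+$. First I would prove $u > 1$ strictly on $[0, T^*)$: the inequality $u \geq y_- \geq 1$ holds on $[0, T_-]$, strictness is automatic because $u=1$ would make $\zeta(u)$ undefined and hence contradict the dichotomy of Theorem \ref{LDirichlet}, and the same dichotomy propagates $u > 1$ past $T_-$ throughout $[0, T^*)$ by continuity. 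Simultaneously $u \leq y_+(t) \to 1^+$ as $t\to T_+^-$, so assuming $T^* > T_+$ and passing to the limit in this inequality forces $u(T_+, \cdot) \leq 1$, contradicting the strict lower bound. Hence $T^* \leq T_+ < \infty$. The case $-2 < I < S < 1$ is symmetric: $\zeta < 0$ on $(-2,1)$ with $-\zeta(y) \sim 1/(1-y)$ as $y\to 1^-$, so $y_\pm$ strictly increase and hit $1$ in finite times $T_+ \leq T_-$; the super-solution enforces $u < 1$ strictly by the analogous argument, the sub-solution gives $\inf_x u(t,x) \geq y_-(t) \to 1^-$ as $t\to T_-^-$, and we conclude $T^* \leq T_- < \infty$.

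For $S < -2$: the real zeros of $\zeta$ in $(-\infty, -2)$ are the trivial zeros $-2k$ with $k \geq 2$. By the sign pattern recalled in Remark \ref{signos}, the points $-4k$ are asymptotically stable equilibria of $y' = -\zeta(y)$, whereas the intermediate points $-4k-2$ are unstable. Consequently every orbit $y_\pm$ starting at $y_\pm(0) \leq S < -2$ is confined to a bounded basin $(-4k_\pm-2, -4k_\pm+2)$ for some integer $k_\pm \geq 1$ and converges monotonically to $-4k_\pm$ as $t \to \infty$; in particular $y_-$ and $y_+$ are globally defined and stay at distance at least $2$ from the pole $s=1$. The comparison $y_- \leq u \leq y_+$ then confines $u$ to a bounded region strictly to the left of $1$, and the dichotomy of Theorem \ref{LDirichlet} yields $T^* = +\infty$.

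The main obstacle is the two blow-up cases: the one-sided ODE comparison only forces $\sup_x u(t, \cdot)$ (respectively $\inf_x u$) to approach $1$, which is not \emph{a priori} the same as the PDE solution reaching the pole pointwise. The crux is to combine both sides of the comparison with the continuity of $u$ and the dichotomy of Theorem \ref{LDirichlet} so as to pinch $u$ between two real-analytic curves that collide at $s=1$ in finite time. In the global case $S < -2$ the only care needed is to read off the correct bounded basins of attraction of the stable trivial zeros $-4k$ from the sign data in Remark \ref{signos}.
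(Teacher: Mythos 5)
Your proposal is correct and follows the paper's general strategy (reduce to an ODE comparison via Lemma \ref{comparisonprinciple} and its corollaries, then derive a contradiction for the blow-up cases and confinement for the global case). The one substantive variation is that in the two blow-up cases the paper replaces the nonlinear comparison ODE $\dot y = -\zeta(y)$ by the \emph{linear} (constant-drift) comparisons $\dot V = -1$ (for $I>1$, giving $u \le S-t$) and $\dot V = -\zeta(I)$ (for $-2<I<S<1$, giving $u \ge I - \zeta(I)\,t$), using only the one-sided bounds $\zeta(u)\ge 1$ on $(1,\infty)$ and $0 < -\zeta(I) \le -\zeta(u)$ on $[I,1)$. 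This avoids any need to argue that solutions of $\dot y = -\zeta(y)$ hit the pole in finite time: the contradiction with $T^* = +\infty$ falls out immediately from the explicit linear bound leaving the admissible strip. Your version, using the true ODE orbits $y_\pm$, is sharper (it produces the actual ODE hitting times) but requires the extra, if elementary, step of verifying $\int \frac{dz}{\zeta(z)} < \infty$ near $z=1$ via the residue $\zeta(z)\sim 1/(z-1)$, together with the connectedness argument that $u$ cannot cross $1$ while it exists. For $S<-2$ your reading of the sign data in Remark \ref{signos} (stable sinks of $\dot y = -\zeta(y)$ at $-4k$, unstable at $-4k-2$) matches the paper exactly, and the two-sided comparison then confines $u$ in a compact set away from the pole, giving global existence. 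Both routes are sound; the paper's is marginally more economical because it sidesteps the ODE blow-up time analysis entirely.
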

\begin{figure}

\begin{center}

    \tikzset{every picture/.style={line width=0.75pt}} %set default line width to 0.75pt        

    \begin{tikzpicture}[x=0.75pt,y=0.75pt,yscale=-1,xscale=1]
    %uncomment if require: \path (0,453); %set diagram left start at 0, and has height of 453
    
    %Straight Lines [id:da6631541560086549] 
    \draw    (99,250.4) -- (301,250.62) -- (462,250.8) ;
    \draw [shift={(465,250.8)}, rotate = 180.06] [fill={rgb, 255:red, 0; green, 0; blue, 0 }  ][line width=0.08]  [draw opacity=0] (8.93,-4.29) -- (0,0) -- (8.93,4.29) -- cycle    ;
    %Straight Lines [id:da9729496990332198] 
    \draw [color={rgb, 255:red, 155; green, 155; blue, 155 }  ,draw opacity=1 ][line width=0.75]    (150,251) -- (292,250.81) ;
    \draw [shift={(296,250.8)}, rotate = 179.92] [fill={rgb, 255:red, 155; green, 155; blue, 155 }  ,fill opacity=1 ][line width=0.08]  [draw opacity=0] (11.61,-5.58) -- (0,0) -- (11.61,5.58) -- cycle    ;
    %Shape: Circle [id:dp4023376424852372] 
    \draw  [fill={rgb, 255:red, 208; green, 2; blue, 27 }  ,fill opacity=1 ] (297,250.7) .. controls (297,248.66) and (298.66,247) .. (300.7,247) .. controls (302.74,247) and (304.4,248.66) .. (304.4,250.7) .. controls (304.4,252.74) and (302.74,254.4) .. (300.7,254.4) .. controls (298.66,254.4) and (297,252.74) .. (297,250.7) -- cycle ;
    %Straight Lines [id:da28369302310172273] 
    \draw [color={rgb, 255:red, 155; green, 155; blue, 155 }  ,draw opacity=1 ][line width=0.75]    (450,251.2) -- (309,250.81) ;
    \draw [shift={(305,250.8)}, rotate = 0.16] [fill={rgb, 255:red, 155; green, 155; blue, 155 }  ,fill opacity=1 ][line width=0.08]  [draw opacity=0] (11.61,-5.58) -- (0,0) -- (11.61,5.58) -- cycle    ;
    
    % Text Node
    \draw (296,259.4) node [anchor=north west][inner sep=0.75pt]    {$1$};
    % Text Node
    \draw (137,259.4) node [anchor=north west][inner sep=0.75pt]    {$-2$};
    % Text Node
    \draw (245,258.4) node [anchor=north west][inner sep=0.75pt]    {$0$};
    % Text Node
    \draw (445,259.2) node [anchor=north west][inner sep=0.75pt]    {$\mathbb{R}$};

    \end{tikzpicture}
\end{center}
\caption{Blow up (a.k.a. quenching) gathered by the pole $s=1$ as the attractor of real-valued trajectories placed in its vicinity.}\label{fig3}
\end{figure}
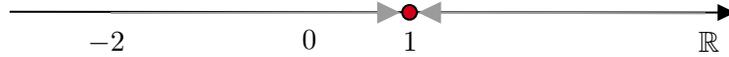
Here, we must precise the notion of blow up that we invoke. Naturally, given the space where solutions are defined (continuous and bounded), equation \eqref{eqn:PDZR-} makes no sense if $u$ touches the value 1 at some space-time point $(t,x)$. In that sense, we prove that there exists a time $t_0>0$ and a point $x_0\in \R^d$ under which $\limsup_{t\uparrow t_0} u(t,x_0)=1$.

\subsection*{Organization of this paper} This work is organized as follows: In Section \ref{sect:2} we introduce and recall the main tools Number Theory needed for the conception of main results.  Section \ref{sec:2b} states and proves several comparison principles needed along this work. Section \ref{sect:3} is devoted to the proof of uniform bounds in the Riemann zeta, Hurwitz zeta, and general Dirichlet $L$-functions. Section \ref{sect:4} contains the proof of local existence and well-posedness, Theorem \ref{LDirichlet}. In Section \ref{sec:5} we prove the global existence results, Theorems \ref{teo:globalpos} and \ref{teo:globalReal}, and Corollary \ref{Cor:faible}. Section \ref{estabilidad} is devoted to the proof of the asymptotic stability of stable zeros, Theorem \ref{ThmZS}. Finally, Section \ref{sec:6} contains the proof of blow-up, Theorem \ref{ThmC5}.

%{\color{blue}
%Ahora con los cambios podriamos dejar el \eqref{ThmZS} en la Section \ref{sec:6} 
%}
\subsection*{Acknowledgements} This work was supported by a national contract ``Exploraci\'on ANID No. 13220060''.  The authors thank Professors Eduardo Friedman, Juan Carlos Pozo and Felipe Gonçalves for comments and suggestions. Prof. Friedman suggestions and key comments concerning the behavior of zeta functions are deeply appreciated. Prof. Gonçalves discussions and comments while visiting DIM at U. Chile are also very much appreciated.  Part of this work was done while C.M. and V. S. were visiting Universidad Austral in Valdivia Chile, as invited speakers of the ``First Symposium on PDEs and Number Theory''. Part of this work was done while V. S. was visiting profs. Felipe Linares and Felipe Gonçalves at IMPA, Rio de Janeiro Brazil. He deeply thanks IMPA and both professors for enlightening discussions and support in the conception of this work and other projects.

\section{Preliminaries}\label{sect:2}

\subsection{Banach spaces}%\label{sect:21} 
We start with the definitions that we will use in this work.

\begin{definition}\label{def:YX} 
Let $Y:=L^{\infty}(\R^d;\Com)\cap C(\R^d;\Com)$ the space of bounded continuous functions  from $\R^d$  to the complex numbers. For each $g\in Y$ we write $g=g_1+ig_2$, where $g_1$ and $g_2$ are real-valued. For a $g\in Y$ we introduce the norm:  
\[
\|g\|_Y=\max\{\|g_1\|_{L^{\infty}},\|g_2\|_{L^{\infty}}\}.
\]
It is direct to check that $(Y,\|\cdot\|_Y)$ is a Banach space. % {is complete}), because it is a closed subspaces of a Banach.

\medskip

For   $T>0$ fixed, we denote  $X:=X_T=C([0,T],Y)$  the space of continuous functions from $[0,T]$ with values in $Y$, endowed with the supremum  norm:
\[
\|u\|_X=\sup_{t\in [0,T]}\|u(t)\|_Y. 
\]
Since $[0,T]$ is compact and $Y$ complete, one easily have that $(X,\|\cdot\|_X)$ defines a Banach vector space. 
\end{definition}

\begin{definition}%\label{def:P} 
Let $P:Y\to Y$ be the nonlinear mapping defined as for $h=h_1+ih_2$,
\begin{equation}\label{def:P}
P(h)=|h_1-1|+|h_2|.
\end{equation}
\end{definition}

It is important to stress that $P(h)$ is a measure of how far from $s=1$ is $h$, in the case where $L_m$ has a pole at $s=1$. In the case where there is no pole, $P$ will not be necessary. The main obstacle to get a defined zeta flow will naturally be the possibility of touching the pole at a certain time.

\subsection{General Hurwitz zeta functions} It turns out that in order to describe the long time behavior in the case of general $L$-functions, we will need information about general zeta functions, in particular, a class including the standard Riemann zeta function.

\medskip

Let $\alpha\not\in \{\ldots, -3,-2,-2,0\}$. The Hurwitz zeta function $\zeta(s,\alpha)$  is defined (see \cite{Ada07}) by
\begin{equation}\label{zeta_general}
\zeta(s,\alpha) := \sum_{n\geq 0} \frac1{(\alpha+n)^s}, \quad \re(s) >1. %, \quad \alpha\not\in \{\ldots, -3,-2,-2,0\}.
\end{equation}
 This series converges absolutely for $\re(s) >1$ and uniform  in the complex subset described by $\re(s) \geq 1+\delta$ where $\delta>0$. Also the Hurwitz zeta function  can be analytically extended to the whole complex plane except by $s=1$ (unique pole). Note that the zeta function can be recover setting $\alpha=1$:
\[
\zeta(s,1)= \zeta(s).
\]
Let us recall the  Hermite integral representation of $\zeta(s,\alpha)$, valid for $\alpha\in (0,1]$.
\begin{lemma}{\cite[eqn. (10)]{Ada07}}
 One has
 \begin{equation}\label{zetahext}
 \zeta(s,\alpha)=\dfrac{1}{2\alpha^s}+\dfrac{\alpha^{1-s}}{s-1}+2\int_0^\infty\dfrac{\sin(s\arctan{\left(\frac{t}{\alpha} \right)})}{(\alpha^2+t^2)^{\frac{s}{2}}(e^{2\pi t}-1)}dt,
 \end{equation}
 for any  $ s\in \Com\backslash\{1\}$, and $\alpha\in (0,1]$.
\end{lemma}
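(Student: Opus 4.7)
The plan is to derive the formula from the Abel--Plana summation formula applied to $f(n)=(n+\alpha)^{-s}$, and then extend the resulting identity from the half-plane $\re(s)>1$ to $\Com\setminus\{1\}$ by analytic continuation.

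First, I would recall the Abel--Plana formula: for a function $f$ holomorphic in the right half-plane with suitable decay, one has
\[
\sum_{n=0}^{\infty} f(n) = \tfrac{1}{2}f(0) + \int_0^{\infty} f(x)\,dx + i\int_0^{\infty} \frac{f(ix)-f(-ix)}{e^{2\pi x}-1}\,dx.
\]
With $f(x)=(x+\alpha)^{-s}$ and $\alpha\in(0,1]$, the hypotheses are satisfied in the half-plane $\re(s)>1$ because of absolute convergence and polynomial decay along vertical lines. The boundary term is $\tfrac{1}{2}\alpha^{-s}$ and the real integral is $\int_0^\infty (x+\alpha)^{-s}\,dx = \alpha^{1-s}/(s-1)$, giving the first two terms of \eqref{zetahext}.

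Next, I would compute the oscillatory integral. Writing $\alpha\pm it = \sqrt{\alpha^{2}+t^{2}}\,e^{\pm i\arctan(t/\alpha)}$ for $t\geq 0$ and using the principal branch of the logarithm (valid since $\alpha>0$), one obtains
\[
(\alpha+it)^{-s}-(\alpha-it)^{-s} = (\alpha^{2}+t^{2})^{-s/2}\bigl(e^{-is\arctan(t/\alpha)}-e^{is\arctan(t/\alpha)}\bigr) = -2i\,\frac{\sin\!\bigl(s\arctan(t/\alpha)\bigr)}{(\alpha^{2}+t^{2})^{s/2}}.
\]
Multiplying by $i$ and integrating against $1/(e^{2\pi t}-1)$ yields exactly the last term of \eqref{zetahext}. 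Summing the three contributions produces the desired identity for $\re(s)>1$.

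Finally, for the analytic continuation I would argue that the right-hand side of \eqref{zetahext} defines a meromorphic function of $s$ on all of $\Com$ with its only pole at $s=1$. The term $\alpha^{1-s}/(s-1)$ accounts for the pole. For the integral, near $t=0$ the integrand behaves like $\sin(st/\alpha)/(2\pi t\,\alpha^{s})\sim s/(2\pi\alpha^{s+1})$, which is integrable; at infinity, the exponential $e^{2\pi t}$ dominates any polynomial factor $(\alpha^{2}+t^{2})^{-s/2}|\sin(s\arctan(t/\alpha))|$ uniformly for $s$ in compact subsets of $\Com$, so by dominated convergence the integral is entire in $s$. Since the left-hand side $\zeta(s,\alpha)$ is the unique meromorphic continuation of the Dirichlet series, and both sides agree on the half-plane $\re(s)>1$, they must agree on $\Com\setminus\{1\}$.

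The main obstacle in this plan is carefully verifying the Abel--Plana hypotheses (decay of $(x+\alpha)^{-s}$ along horizontal lines in the right half-plane as $|\ima x|\to\infty$) uniformly enough to justify the contour deformation that underlies Abel--Plana; once this is in place, the rest is algebra and a standard analytic-continuation argument. A cleaner alternative, avoiding Abel--Plana entirely, would be to start from the Mellin representation $\zeta(s,\alpha)=\tfrac{1}{\Gamma(s)}\int_0^\infty \tfrac{t^{s-1}e^{-\alpha t}}{1-e^{-t}}dt$ and deform the contour around the poles at $t=2\pi i k$, but the Abel--Plana route is the most direct path to the explicit $\arctan$ and $\sin$ structure in \eqref{zetahext}.
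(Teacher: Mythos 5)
The paper does not prove this lemma at all; it is stated as a direct citation of Adamchik \cite[eqn.~(10)]{Ada07}, so you are supplying a proof where the authors simply reference the literature. Your Abel--Plana derivation is in fact the classical route to the Hermite integral representation and your algebra is correct: with $f(z)=(z+\alpha)^{-s}$ one has $f(0)/2=\tfrac{1}{2}\alpha^{-s}$, $\int_0^\infty f = \alpha^{1-s}/(s-1)$ for $\re(s)>1$, and writing $\alpha\pm it=(\alpha^2+t^2)^{1/2}e^{\pm i\arctan(t/\alpha)}$ (principal branch, valid since $\alpha>0$) gives $f(it)-f(-it)=-2i(\alpha^2+t^2)^{-s/2}\sin\bigl(s\arctan(t/\alpha)\bigr)$, and multiplication by $i$ produces exactly the stated integrand. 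Your continuation argument is also sound: the integrand extends continuously to $t=0$ with limit $s/(2\pi\alpha^{s+1})$, and the factor $1/(e^{2\pi t}-1)$ kills the at-most-polynomial growth of $(\alpha^2+t^2)^{-s/2}\sin(s\arctan(t/\alpha))$ as $t\to\infty$ locally uniformly in $s$, so the integral converges locally uniformly and is entire (strictly speaking you want Morera's theorem or differentiation under the integral here rather than "dominated convergence," but that is a phrasing issue, not a gap). The only detail you wave at — verifying the Abel--Plana hypotheses — is routine: for $\re(s)>1$ and $\re z\geq 0$ one has $|\arg(z+\alpha)|<\pi/2$, so $|(z+\alpha)^{-s}|\leq e^{\pi|\ima s|/2}|z+\alpha|^{-\re s}$ decays polynomially, which gives both the vanishing as $|\ima z|\to\infty$ and the vanishing of the contribution from the far right edge. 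Your alternative suggestion (Mellin representation plus residues at $t=2\pi i k$) would also work and is the route some references prefer, but Abel--Plana is indeed the shortest path to the explicit $\arctan$ and $\sin$ structure, and your sketch is a complete proof in outline.
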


\begin{remark}
Notice that  the right hand \eqref{zetahext} is a well-defined  for any $\alpha\in \Com$ such that $\re(\alpha)>0$ \cite[eqn. (10)]{Ada07}, but this will not be used in this paper.
\end{remark}

\subsection{Dirichlet characters and Dirichlet $L$-functions}%\label{sect:22}

In this subsection we introduce elements of analytic number theory needed through this work. As usual, $\hbox{gcd}(m,n)$ denotes the greatest common divisor between $m$ and $n$ of non negative integers. In case  
$\hbox{gcd}(m,n)=1$ we say that $m$ and $n$ are coprime. For the purposes of this work, we will only consider as input the set $\N$, but the following definitions are also available in the case of the integers $\Z$.

\begin{definition}\label{def:Dirichlet}
   A Dirichlet character with period $m\in\N$, $m>0$, is a complex-valued function $\chi_{m}:\N\to \Com$ satisfying:
    \begin{enumerate}
    \item Periodicity: $\chi_{m}(n+m)=\chi_{m}(n)$,\quad for all $n\in \N$.
        \item $\chi_{m}(ab)=\chi_{m}(a)\chi_{m}(b)$\quad for all  $ a,b\in \N$.   
        \item For all $n\in \N$,
       \begin{align*}
         &\chi_{m}(n)\neq 0 \quad \text{ if } \quad {gcd}(n,m)=1,\\
        &\chi_{m}(n)=0 \quad  \text{ if } \quad  gcd(n,m)>1. 
        \end{align*}
    \end{enumerate}
\end{definition}

\begin{remark}\label{propiedades_chi}
One easily gets that $\chi_m(1)=1$, $\chi_m(n^p)=\chi_m(n)^p$, $p\in\N$, and if $n_1\equiv n_2$ (mod $m$), then $\chi_m(n_1)=\chi_m(n_2)$. Additionally, if $m>1$, one always has $\chi_m(km)=0$ for any $k\in \{1,2,\ldots\}$. Finally, if $m=1$, $\chi_m(n)=1$ for all $n\in \N$.
\end{remark}

\begin{definition}\label{def:Dirichletprincipal}
   A Dirichlet character with period $m$ is called principal if satisfies the following conditions:  For all $n\in \N$
       \begin{align*}
         &\chi_{m}(n)=1 \quad \text{ if } \quad gcd(n,m)=1,\\
        &\chi_{m}(n)=0 \quad  \text{ if } \quad  gcd(n,m)>1. 
        \end{align*}
\end{definition}

A principal character acts as an identity in the group of characters: if $\chi_{m}$ is a principal character, and $\widetilde{\chi}_{m}$ is a general character, then one has $\widetilde{\chi}_{m}(n) \chi_n(n) =\chi_m(n)\widetilde{\chi}_{m} = \widetilde{\chi}_{m}(n)$ (notice that the product of two characters of the same modulus is another character of the same modulus).

\medskip

Let us recall the definition of the Dirichlet $L$-functions, previously introduced in \eqref{L_m_serie}. Let $\chi_{m}$ be a Dirichlet character of period $m$. The associated Dirichlet $L$-function is defined for $\re(s)>1$ as
\[%\begin{equation}\label{L_m_serie}
L_{m}(s)=\displaystyle{\sum_{n=1}^\infty \dfrac{\chi_{m}(n)}{n^s}}.
\]%end{equation}
As in the case of the $\zeta$ function, Dirichlet $L$-functions have an associated critical line, and a corresponding unique extension to $\re(s)<1$, which will be either meromorphic or analytic on $\Com$ depending on the character. Indeed, if the character $m$ is principal, then $L_{m}$ has a pole at $s=1$, and in the non principal case, it does not.  

\medskip

Dirichlet $L$-functions are deeply related to general zeta functions. Indeed, one has the following representation formula:
\begin{lemma}\label{lema:formulalsum}
Let $s\in \Com$, and $m$ a Dirichlet character. Then 
\begin{equation}\label{formula_Lm}
L_{m}(s)=\frac1{m^{s}}\sum_{r=1}^m\chi_{m}(r)\zeta\left(s,\dfrac{r}{m}\right).
\end{equation}
\end{lemma}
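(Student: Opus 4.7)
\medskip

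\noindent\textbf{Proof proposal.} The plan is to establish the identity in the half-plane of absolute convergence $\re(s)>1$ by a direct rearrangement, and then extend it to all of $\Com\setminus\{1\}$ by analytic continuation. The key structural input is just the periodicity of $\chi_m$, which allows us to split the defining series of $L_m$ into residue classes modulo $m$, each of which can be matched with a Hurwitz zeta value via a trivial rescaling.

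First, I would fix $s$ with $\re(s)>1$, so that the series defining $L_m(s)$ converges absolutely and can be reordered freely. I would write every $n\in\N$ uniquely as $n=mk+r$ with $k\in\{0,1,2,\ldots\}$ and $r\in\{1,2,\ldots,m\}$. By periodicity (Definition~\ref{def:Dirichlet}(1)) and Remark~\ref{propiedades_chi}, $\chi_m(mk+r)=\chi_m(r)$. Hence
\[
L_m(s)=\sum_{n=1}^\infty\frac{\chi_m(n)}{n^s}=\sum_{r=1}^m\chi_m(r)\sum_{k=0}^\infty\frac{1}{(mk+r)^s}.
\]

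Next, I would factor $m^{-s}$ out of the inner sum and compare with the definition \eqref{zeta_general} of the Hurwitz zeta:
\[
\sum_{k=0}^\infty\frac{1}{(mk+r)^s}=\frac{1}{m^{s}}\sum_{k=0}^\infty\frac{1}{\left(k+\tfrac{r}{m}\right)^s}=\frac{1}{m^{s}}\,\zeta\!\left(s,\tfrac{r}{m}\right).
\]
Substituting gives precisely \eqref{formula_Lm}, valid in $\{\re(s)>1\}$. Note that for $r=m$, the Hurwitz zeta $\zeta(s,1)=\zeta(s)$ has a pole at $s=1$, but the factor $\chi_m(m)$ equals $0$ when $m>1$ (since $\gcd(m,m)=m>1$), so this term either vanishes or, when $m=1$, trivially reduces to the identity $L_1=\zeta$.

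Finally, I would invoke analytic continuation. The left-hand side $L_m(s)$ extends to a meromorphic function on $\Com$ (with at most a simple pole at $s=1$ in the principal case), while each $\zeta(s,r/m)$ extends meromorphically to $\Com\setminus\{1\}$ as recalled in \eqref{zetahext}; therefore the right-hand side defines a meromorphic function on $\Com$. Since the two meromorphic functions agree on the open half-plane $\{\re(s)>1\}$, the identity principle forces equality throughout their common domain of meromorphy, completing the proof. I do not expect any serious obstacle here; the only minor care needed is the bookkeeping at $r=m$ and checking that the possibly singular contributions from the Hurwitz zetas are exactly compensated by the vanishing of $\chi_m(m)$ (for $m>1$) or reproduce the pole of $\zeta$ (for $m=1$).
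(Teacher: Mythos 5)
Your derivation is correct and is essentially the standard argument found in the reference (Apostol) that the paper simply cites in lieu of a proof: split the Dirichlet series over residue classes modulo $m$, apply periodicity of $\chi_m$, rescale to identify Hurwitz zeta values, and then extend by analytic continuation. One small correction to your closing remark on poles: every Hurwitz zeta $\zeta(s,r/m)$, $r=1,\ldots,m$, has a simple pole at $s=1$ with residue $1$ (not only $\zeta(s,1)$), so the residue of the right-hand side of \eqref{formula_Lm} at $s=1$ is $m^{-1}\sum_{r=1}^{m}\chi_m(r)$; by orthogonality this sum vanishes exactly when $\chi_m$ is non-principal (matching $L_m$ being entire) and is nonzero for the principal character (matching the pole of $L_m$), so the vanishing of $\chi_m(m)$ alone does not account for the cancellation. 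This does not affect the validity of the proof, since the identity-principle argument you give only needs meromorphy of both sides and agreement on the half-plane $\re(s)>1$.
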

\begin{proof}
See \cite[p. 249]{Apo76}.
\end{proof}

Notice that \eqref{formula_Lm} is valid in the whole complex plane, provided $L_{m}(s)$ is well-defined, and does not use in principle the standard series representation of $L_m$. Indeed, 
\eqref{formula_Lm} consists only of a finite number of terms written in terms of the more involved general zeta $\zeta\left(s,\dfrac{r}{m}\right)$ introduced in \eqref{zeta_general}.

\section{Comparison principles}\label{sec:2b}

 In what follows, we recall invariant-manifold results stated in \cite{GNHY}, see also \cite{Wein}. To start with, consider the $2\times2$ ODE system % \eqref{eqn:edog}:
\begin{equation}\label{eqn:edog}
\begin{aligned}
\dot U(t)= &~{} F(U(t)), \quad t\ge 0,\\
U(0) = &~{} U_0 \in\R^2 \quad \hbox{given}.
\end{aligned}
\end{equation}
Here, $F:\R^2 \to \R^2$, $F=(F_1,F_2)$, is assumed as a locally Lipschitz function in oder to ensure at least local existence in time. Assume that $U$ is defined in the interval $[0,T]$. Consider also the Heat flow $2\times 2$ system associated to the previous model: %\eqref{eqn:edpg}
    \begin{equation}\label{eqn:edpg}
\begin{aligned}
\partial_t u (t,x)= &~{}\Delta u (t,x)+ F(u(t,x)), \quad x\in\R^d, \quad t\ge 0,\\
u(0,x) = &~{} (u_{0,1}(x),u_{0,2}(x)) = g(x) \quad \hbox{given}.
\end{aligned}
\end{equation}
In order to clarify $F$, later we shall use 
\[
F=  L_m , \quad F_1= \re L_m, \quad \quad F_2= \ima L_m.
\]
Let $u=u(t,x)\in\R^2$ be the solution of the equation \eqref{eqn:edpg}, and by taking $T>0$ smaller if necessary, consider $u(t,x)$ defined for $t\in[0,T]$. Let $\mathcal I[u](t)$ be its image at time $t$% \in[0,T]$ %we can define:
\[
\mathcal I[u](t):=\left\{ u(t,x)\in \R^2 ~ | ~ x\in \R^d\right\}, \quad t\in[0,T].
\]

\subsection{Invariant domains} By a classical argument, if the solution $U=U(t)$ of \eqref{eqn:edog} stays bounded during the maximal interval of existence $[0,T]$, then $T=+\infty$ and $U(t)$ exists globally. The following definition is precisely used to ensure that a priori estimates on the solutions are enough to obtain global existence in parabolic models.

\begin{definition}\label{Domaint}
Let $D(t)\subseteq \R^2$ be a domain parametrized by $t\geq 0$, bounded at each finite time $t$. We say that $D=(D(t))_{t\geq 0}$ is invariant under the flow $U(t)$ of \eqref{eqn:edog} if $U_0 \in D(0)$ implies $U(t)\in D(t)$, for all $t>0$.
\end{definition}

Even if the previous definition seems to require the global existence of $U(t)$, it turns out that it is easily concluded that if $D=(D(t))_{t\geq 0}$ is invariant under the flow $U(t)$ for any time $[0,T]$, then it is global. This subtle property will be assumed in several parts of this paper, but always checking first that the required invariant domains are locally bounded in time.

\medskip

Invariant sets are key in the study of long time properties of diffusive models, as shown in the following well-known result.

\begin{lemma}[Weinberger \cite{Wein}, see also Lemma 2.1 in \cite{GNHY}]\label{lem:lema1}
 Suppose that $D(t) \subset \R^2$ is convex for each $t > 0$ and, $\{D(t)\}_{t>0}$ is
 invariant under the flow $U(t)$ in \eqref{eqn:edog}. If $\mathcal I[u](t_0) \subseteq D(t_0)$ for some $t_0 > 0$, then $\mathcal I[u](t)\subseteq D(t)$ for all $t > t_0.$
\end{lemma}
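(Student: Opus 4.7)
The plan is to exploit the convex structure of $D(t)$ to reduce the two-dimensional invariance claim to a family of scalar parabolic comparison inequalities. Since $D(t)\subset\R^2$ is closed and convex for every $t>0$, by Hahn--Banach it coincides with the intersection of its closed supporting half-planes:
\[
D(t)=\bigcap_{\nu\in S^1}\bigl\{p\in\R^2:\nu\cdot p\le h_\nu(t)\bigr\},\qquad h_\nu(t):=\sup_{q\in D(t)}\nu\cdot q.
\]
Therefore it suffices to fix an arbitrary unit vector $\nu\in S^1$ and establish, under the hypothesis $\mathcal I[u](t_0)\subseteq D(t_0)$, that the scalar function $\psi_\nu(t,x):=\nu\cdot u(t,x)-h_\nu(t)$ remains non-positive for every $t\ge t_0$ and $x\in\R^d$.

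Contracting $\nu$ against \eqref{eqn:edpg} gives $\partial_t\psi_\nu-\Delta\psi_\nu=\nu\cdot F(u(t,x))-h_\nu'(t)$, with $h_\nu'$ read as an upper Dini derivative. The key structural input is the ODE-invariance of $\{D(t)\}$: for any boundary point $p_0\in\partial D(t_*)$ at which $\nu$ is a supporting outward normal, i.e.\ $\nu\cdot p_0=h_\nu(t_*)$, the trajectory $U(\cdot)$ of \eqref{eqn:edog} with $U(t_*)=p_0$ lies in $D(\cdot)$ for times slightly larger than $t_*$, so comparing $\nu\cdot U(t)\le h_\nu(t)$ at $t=t_*^+$ yields the Nagumo subtangency bound $\nu\cdot F(p_0)\le D^+h_\nu(t_*)$. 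In particular the source in the equation for $\psi_\nu$ is non-positive at every $(t,x)$ with $u(t,x)\in\partial D(t)$ and $\psi_\nu(t,x)=0$.

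With this sign information I would run a scalar parabolic maximum principle on the slab $(t_0,T]\times\R^d$. Arguing by contradiction, suppose $\psi_\nu$ becomes strictly positive somewhere. Using boundedness of $u$ on finite time intervals, the barrier $\psi_\nu^\varepsilon(t,x):=\psi_\nu(t,x)-\varepsilon(1+|x|^2+Ct)$ with $C>2d$ is non-positive at $t=t_0$, tends to $-\infty$ as $|x|\to\infty$ uniformly on $[t_0,T]$, and satisfies $(\partial_t-\Delta)\psi_\nu^\varepsilon=\nu\cdot F(u)-h_\nu'(t)-(C-2d)\varepsilon$. At a first interior space-time point $(t_1,x_1)$ where $\psi_\nu^\varepsilon$ would touch zero, the inequalities $\partial_t\psi_\nu^\varepsilon\ge 0$ and $\Delta\psi_\nu^\varepsilon\le 0$ combined with the subtangency bound and the strictly negative correction $-(C-2d)\varepsilon$ produce a contradiction; sending $\varepsilon\to 0$ and then intersecting over $\nu\in S^1$ yields the conclusion. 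The main technical obstacle is that $t\mapsto h_\nu(t)$ is only Lipschitz in general, so all time-derivatives must be interpreted in the Dini sense and the subtangency bound has to be extended, via Lipschitz continuity of $F$, to a neighbourhood of $\partial D(t)$ rather than only to the boundary itself, in order for the contradiction step on the perturbed function $\psi_\nu^\varepsilon$ to close cleanly.
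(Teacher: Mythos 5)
The paper does not prove this lemma; it is quoted from Weinberger's invariant\mbox{-}region theorem and from Lemma~2.1 of \cite{GNHY}. Your proposal is therefore being measured against the classical argument rather than against anything in the present text. The framework you lay out (intersecting supporting half-planes, a Nagumo subtangency condition extracted from ODE invariance, a perturbed parabolic maximum principle on $\R^d$) is the right one, and you have correctly located where the difficulty sits. However, the plan as written has two genuine gaps.

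First, the per-$\nu$ contradiction does not close. At a first touching point $(t_1,x_1)$ for a \emph{fixed} $\nu$, all you know is that $\nu\cdot u(t_1,x_1)=h_\nu(t_1)+\sigma$ with $\sigma>0$; this does not say that $u(t_1,x_1)$ is Euclidean-close to the $\nu$-face $\{q\in\partial D(t_1):\nu\cdot q=h_\nu(t_1)\}$, which is the only place the Nagumo bound $\nu\cdot F(q)\le D^+h_\nu(t_1)$ lives. For instance, if $D$ is the unit disc, $\nu=(1,0)$, and $p=(1+\sigma,\,1)$, then $\nu\cdot p - h_\nu=\sigma$ while $\mathrm{dist}\bigl(p,\{(1,0)\}\bigr)\approx 1$, so the Lipschitz estimate $\nu\cdot F(p)\le\nu\cdot F(p_0)+L|p-p_0|$ only gives an $O(1)$ error, not $O(\sigma)$. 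The extension you invoke works precisely when $\nu$ is the \emph{optimal} outward normal at the nearest point $q\in\overline{D(t_1)}$, i.e.\ when $\sigma=\mathrm{dist}(p,\overline{D(t_1)})$ and $\nu=(p-q)/|p-q|$. To secure this, one must either take the first touching time and point over \emph{all} $\nu\in S^1$ at once (at such a point $\nu_0$ realizes $\max_\nu[\nu\cdot u-h_\nu]$, which for convex $D$ equals $\mathrm{dist}(u,\overline D)$ and is attained by the outward normal at the projection, so $|u-q|=\sigma$), or, more transparently, run a single maximum principle on $\phi(t,x):=\mathrm{dist}(u(t,x),\overline{D(t)})$, which is convex in $u$, picks the correct normal automatically, and satisfies $(\partial_t-\Delta)\phi\le L\phi$ in the viscosity sense.

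Second, even with the optimal-normal fix, the barrier $\varepsilon(1+|x|^2+Ct)$ with $C>2d$ cannot absorb the Lipschitz source: at the touching point the source is at most $L\sigma=L\varepsilon(1+|x_1|^2+Ct_1)$, while the barrier supplies only $-(C-2d)\varepsilon$, and since $|x_1|$ need not be small (only bounded by $O(\varepsilon^{-1/2})$ through the boundedness of $u$), there is no contradiction. Because the source scales with the barrier itself, the barrier must grow exponentially in time; with $\varepsilon(1+|x|^2)e^{Ct}$ and $C>L+2d$, one has at the touching point
\[
(\partial_t-\Delta)\psi^\varepsilon_{\nu_0}\le L\sigma - C\sigma + 2d\varepsilon e^{Ct_1}\le (L-C+2d)\,\varepsilon e^{Ct_1}<0,
\]
which is the desired contradiction. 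Taken together, the global-in-$\nu$ (or distance-function) reduction plus the exponential barrier recover Weinberger's theorem; as written, your argument omits both ingredients.
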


Notice that this lemma states that the property of invariance under the flow $U(t)$ (Definition \ref{Domaint}) is transferred to the PDE \eqref{eqn:edpg}.  Additional to the previous result, we shall need the following invariant subspace lemma.

\begin{lemma}[Lemma 2.2 in \cite{GNHY}]\label{lem:lema2}
Consider a finite set of $M$ functions of the form $\{H_j=H_j(t,u_1,u_2)\}_{1\leq j \leq M}$, where each $H_j$ is a $C^1$ function from $\R^3$ into $\R$. Suppose additionally that $D(t)$ is expressed as
 \begin{equation}\label{D_inter}
 D(t)=\bigcap_{j=1}^M\left\{(u_1,u_2) \in \R^2 : H_j(t,u_1,u_2) < 0\right\},\quad t > 0.
 \end{equation}
If 
\[ 
\dfrac{d}{dt}H_j(t,U_1(t),U_2(t)) \leq 0\quad on \quad \left\{(u_1,u_2) \in \partial D(t) : H_j(t,u_1,u_2) = 0\right\},
\]
 and for all $j = 1,2,...,M.$
Then $\{D(t)\}_{t>0}$ is  invariant under the flow \eqref{eqn:edog}.
\end{lemma}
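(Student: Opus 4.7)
The plan is a first-exit-time contradiction argument. Let $U:[0,T]\to\R^2$ denote the solution of \eqref{eqn:edog} with $U(0)=U_0\in D(0)$, and set
\[
T^*:=\sup\left\{\tau\in[0,T]~\big|~U(t)\in D(t)\text{ for all }t\in[0,\tau]\right\}.
\]
Each $D(t)$ is a finite intersection of open half-spaces $\{H_j(t,\cdot)<0\}$ and is therefore open, so continuity of $U$ forces $T^*>0$. The goal is to prove $T^*=T$.

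Assume for contradiction that $T^*<T$. Continuity of $U$ and of the $H_j$ puts $U(T^*)$ in $\overline{D(T^*)}$, and it cannot lie in the open interior $D(T^*)$, since otherwise $U(t)$ would remain in $D(t)$ slightly past $T^*$, contradicting the maximality of $T^*$. Hence $U(T^*)\in\partial D(T^*)$, and by the intersection structure \eqref{D_inter} at least one index $j_0$ is active:
\[
H_{j_0}(T^*,U(T^*))=0, \qquad H_k(T^*,U(T^*))\leq 0 \text{ for every }k.
\]

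The key step is to examine the scalar function $\phi(t):=H_{j_0}(t,U_1(t),U_2(t))$, which is $C^1$ under the hypotheses of the lemma. By the definition of $T^*$ one has $\phi(t)<0$ on $[0,T^*)$ and $\phi(T^*)=0$, so the left derivative satisfies $\phi'(T^*)\geq 0$. On the other hand, the lemma's hypothesis applied at the active boundary point $(T^*,U(T^*))$ gives $\phi'(T^*)\leq 0$. If these two bounds combine to give $\phi'(T^*)<0$ strictly, the proof is complete, since $\phi$ would be strictly decreasing through $0$, contradicting $\phi<0$ on $[0,T^*)$. The residual possibility is the tangential case $\phi'(T^*)=0$.

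The main obstacle is precisely this degenerate tangency, because a non-strict boundary inequality $\leq 0$ alone does not force a trajectory to remain strictly inside an open set. I would resolve it by the standard perturbation recipe: modify the vector field near the active boundary by a small inward bias $F_\eta:=F-\eta\,g$, where $g$ is smooth and aligned with $\nabla H_{j_0}/|\nabla H_{j_0}|^2$ on a neighborhood of $\{H_{j_0}=0\}\cap\partial D(T^*)$, so that the derivative hypothesis for the perturbed flow becomes strictly negative on the active boundary. The contradiction argument above then applies verbatim to $F_\eta$, giving $U_\eta(t)\in D(t)$ for all $t$. Passing $\eta\downarrow 0$ and invoking continuous dependence of solutions of \eqref{eqn:edog} on the vector field recovers invariance for the original flow and closes the proof.
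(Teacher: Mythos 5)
Your first-exit-time setup and the identification of the degenerate tangency $\phi'(T^*)=0$ as the crux of the matter are both correct and show a good grasp of where the difficulty lies. However, the perturbation-and-limit argument you propose to handle that case does not close the proof, and the gap is not a minor technicality.

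The most serious problem is the passage $\eta\downarrow 0$. For each fixed $\eta>0$ you would obtain $U_\eta(t)\in D(t)$, i.e.\ $H_j(t,U_\eta(t))<0$ for every $j$. Continuous dependence of solutions on the right-hand side then gives $U_\eta(t)\to U(t)$, and taking the limit in the strict inequality only yields $H_j(t,U(t))\leq 0$, that is $U(t)\in\overline{D(t)}$. Since $D(t)$ is open, this is strictly weaker than the claimed conclusion $U(t)\in D(t)$: the limit argument cannot by itself rule out the trajectory ending up on the boundary, which is precisely the scenario you were trying to exclude. You would need a quantitative lower bound $\inf_j(-H_j(t,U_\eta(t)))\geq c>0$ uniform in $\eta$, and nothing in the perturbation construction produces one -- the ``inward bias'' of size $\eta$ only pushes the trajectory a distance of order $\eta$ away from the boundary. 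So the argument is circular: it reduces open-set invariance back to open-set invariance.

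There are also several secondary issues with the perturbation itself. The field $g=\nabla H_{j_0}/|\nabla H_{j_0}|^2$ is undefined or unbounded where $\nabla H_{j_0}$ vanishes, and the lemma imposes no nondegeneracy hypothesis. The choice of active index $j_0$ and the neighborhood of $\{H_{j_0}=0\}\cap\partial D(T^*)$ both depend on the (unknown) first exit point, which makes the construction self-referential. And when several constraints are simultaneously active at a corner of $\partial D(T^*)$, biasing the field inward relative to $H_{j_0}$ may push $\frac{d}{dt}H_k$ positive for a different active index $k$, destroying the very hypothesis you are trying to restore. Finally, note that your argument never invokes the locally Lipschitz property of $F$, which is essential here: the standard route to strict (open-set) invariance under a non-strict tangential inequality is a Gronwall-type estimate exploiting Lipschitzness of $F$ together with continuity of $\nabla H_j$, giving a bound of the form $\phi_{j_0}'(t)\lesssim|\phi_{j_0}(t)|$ near the boundary, from which $\phi_{j_0}(t)<0$ persists. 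Without some such quantitative use of the regularity hypotheses, the tangential case cannot be dispatched.
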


Notice that the set \eqref{D_inter} can be expressed in a simpler form as
\[
D(t)=\bigcap_{j=1}^M (H_j(t,\cdot,\cdot))^{-1}((-\infty,0)),
\]
which in topological terms represents a sort of open set of the related inverse topology induced by the family of functions $H_j$.
%{\color{blue} Entiendo la idea de la otra expresión, pero no creo que este bien, ya que el parametro t queda libre, pues la pre imagen nos entragaria algo de 
%3 componentes, creo que hay un detalle que modificar como fijando el tiempo}

%\begin{proof}
%    These Lemmas are enunciate in \cite{GNHY} and 
%    proved in \cite{Wein}.
%\end{proof}
 
\subsection{Comparison Principles} The subsequent result is a classical lemma that provides  pointwise bounds of solutions to nonlinear heat systems, presented within a broader and more general context.
%Let $F :\mathbb R^2 \to \mathbb R^2$ and $G:\R \to \R$ be fixed and locally Lipschitz functions, where $F=(F_1,F_2)$. As in \eqref{eqn:edpg}, for some small $T>0$ to be determined later, we consider the PDE system for $u=(u_1,u_2)$:
%  \begin{equation}\label{eqn:edpg}
%    \begin{aligned}
%        \partial_t u(t,x)&=\Delta u(t,x)+F(u(t,x)), \quad (t,x)\in [0,T]\times \R^d,\\
%       % \partial_t u_2(t,x)&=\Delta u_2(t,x)+F_2(u(t,x)),\\ %\\  \quad \forall (t,x)\in [0,T]\times \R^d\notag\\
%        u(0,x)&=(u_{0,1}(x),u_{0,2}(x))\quad  x\in \R^d.
%        \end{aligned}
%   \end{equation}
%Let us also consider the equivalent ODE system for $U=(U_1,U_2)$ (compare with \eqref{eqn:edog})
%\begin{equation}\label{eqn:edog}
%\begin{aligned}
%\dot U(t)= &~{} F(U(t)), \quad t\ge 0,\\
%U(0) = &~{} U_0 \in\R^2 \quad \hbox{given}.
%\end{aligned}
%\end{equation}  
%This system has a unique solution, by making $T$ smaller if necessary.  

\medskip

Let $G:\R \to \R$ be fixed and locally Lipschitz function.  $G$ will describe the nonlinear term in a scalar ODE system. Indeed, let $V_0\in\R$ and $\varepsilon\in \R$. Consider the scalar ODE given by
 \begin{equation}\label{heat2}
    \begin{aligned}
        \dot V(t,V_0,\varepsilon)=G(V(t,V_0,\varepsilon)), \quad V(0,V_0,\varepsilon)=V_0+\varepsilon, \quad \varepsilon\in\mathbb R.
    \end{aligned}
    \end{equation}
Let $V$ be a solution defined in the internal of existence $[0,T_\varepsilon)$, with $T_\varepsilon\leq +\infty$ maximal. We shall assume that for all  $- \varepsilon_0 <\varepsilon <\varepsilon_0$ one has $0<T_\varepsilon\leq +\infty$, and that 
\[
T_*:= \inf_{-\varepsilon_0 < \varepsilon <\varepsilon_0} T_\varepsilon  >0.
\]
Later we will verify that this is precisely the case. In what follows, by taking $T$ smaller if necessary, we assume $t<T\leq T_*$.

%{\color{red}ac'a el tiempo de existencia $T_\varepsilon$ depende de epsilon, es necesario epsilon?}{\color{blue} Si para poder tomar las desigualdades con mayor o igual y no estricto. Pero podriamos tomar el minimo de los tiempos de existencia en un rango entre $-\varepsilon_0,\varepsilon_0$.}

\begin{lemma}[Comparison principle]\label{comparisonprinciple}
   Under the previous assumptions on $T$, the following is satisfied. There exists $\varepsilon_0>0$ such that, for all $0<\varepsilon <\varepsilon_0$ the solutions $u$ and $V(\cdot,V_0,\varepsilon)$ of \eqref{eqn:edpg}  and \eqref{heat2}, respectively,  satisfy
    \medskip
    \begin{enumerate}
    \item If $F_1(V(t,V_0,\varepsilon),u_2)\leq G(V(t,V_0,\varepsilon))$ for all $t\geq 0$, $u_2\in\R$ and $\varepsilon>0$, and $u_{0,1}(x)\leq V_{0}$ for all $x\in\R^d$, then one has
    \[
    u_1(t,x)\leq V(t,V_0,0),\quad \text{for all } (t,x)\in[0,T]\times \R^d.
    \]
\item If now $F_1(V(t,V_0,\varepsilon),u_2)\geq G(V(t,V_0,\varepsilon))$ for all $t\geq 0$, $u_2\in\R$ and $\varepsilon<0$, and $u_{0,1}(x)\geq V_0$  for all $x\in\R^d$, then one has
    \[
    u_1(t,x)\geq V(t,V_0,0),\quad \text{for all } (t,x)\in[0,T]\times \R^d.
    \]
   \end{enumerate} 
\end{lemma}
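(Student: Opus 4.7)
The plan is to linearise the difference $w^\varepsilon := V(\cdot,V_0,\varepsilon) - u_1$, apply a bounded--coefficient maximum principle on $\R^d$, and pass to the limit $\varepsilon \to 0^+$; part $(2)$ will follow by a completely symmetric argument with $\varepsilon < 0$.

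First, for part $(1)$, fix $\varepsilon \in (0,\varepsilon_0)$ and set $V^\varepsilon(t) := V(t,V_0,\varepsilon)$, so that $V^\varepsilon(0) = V_0 + \varepsilon$. Define
\[ w^\varepsilon(t,x) := V^\varepsilon(t) - u_1(t,x). \]
Since $V^\varepsilon$ is independent of $x$, subtracting the two evolution equations yields
\[ \partial_t w^\varepsilon - \Delta w^\varepsilon = G(V^\varepsilon) - F_1(u_1,u_2). \]
Adding and subtracting $F_1(V^\varepsilon,u_2)$ and using the hypothesis $F_1(V^\varepsilon,u_2) \le G(V^\varepsilon)$,
\[ \partial_t w^\varepsilon - \Delta w^\varepsilon \ge F_1(V^\varepsilon,u_2) - F_1(u_1,u_2). \]
Because $F$ is locally Lipschitz and $u_1, u_2, V^\varepsilon$ are uniformly bounded on $[0,T]\times\R^d$ for $T \le T_*$ (this is precisely where the restriction $T \le T_*$ enters: it ensures $\varepsilon$--independent $L^\infty$ bounds for $V^\varepsilon$), a standard mean--value argument provides $b^\varepsilon \in L^\infty([0,T]\times\R^d)$ with
\[ F_1(V^\varepsilon,u_2) - F_1(u_1,u_2) = b^\varepsilon(t,x)\, w^\varepsilon, \qquad \|b^\varepsilon\|_\infty \le K, \]
for $K$ independent of $\varepsilon$.

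The next step is the maximum principle. Combining the above, $w^\varepsilon$ satisfies
\[ \partial_t w^\varepsilon - \Delta w^\varepsilon - b^\varepsilon(t,x)\, w^\varepsilon \ge 0 \quad\text{on } (0,T]\times\R^d, \qquad w^\varepsilon(0,\cdot) \ge \varepsilon > 0. \]
Setting $\widetilde w := e^{-Kt} w^\varepsilon$ absorbs the zero--order coefficient and reduces the problem to $\partial_t \widetilde w - \Delta \widetilde w \ge 0$ with nonnegative, bounded initial data and a bounded solution. A classical maximum principle for bounded solutions of the heat equation on $\R^d$ then gives $\widetilde w \ge 0$, hence $w^\varepsilon \ge 0$ and $u_1(t,x) \le V^\varepsilon(t)$. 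By continuous dependence of ODE solutions on the initial condition, $V^\varepsilon(t) \to V(t,V_0,0)$ uniformly on $[0,T]$ as $\varepsilon \to 0^+$, completing part $(1)$. Part $(2)$ is obtained verbatim with $\varepsilon \in (-\varepsilon_0,0)$, redefining $w^\varepsilon := u_1 - V^\varepsilon$ (so $w^\varepsilon(0,\cdot) \ge -\varepsilon > 0$) and using the reversed inequality on $F_1$ and $G$; every sign flips consistently.

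The hard part will be justifying the maximum principle on the unbounded domain $\R^d$: it genuinely requires $w^\varepsilon$ to be bounded and $b^\varepsilon$ to belong to $L^\infty$, both of which rely on the a priori bound $T \le T_*$ that is built into the hypotheses. The mild perturbation by $\varepsilon$ is what prevents degeneracy of the inequality at the initial time, allowing the maximum principle to propagate a strict positivity which then survives the limit $\varepsilon \to 0$.
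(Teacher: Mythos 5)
Your proof takes a genuinely different route from the paper's. The paper invokes Weinberger's invariant-domain machinery (Lemmas \ref{lem:lema1} and \ref{lem:lema2}): it sets $H(t,u_1,u_2) := u_1 - V(t,V_0,\varepsilon)$, checks that $\frac{d}{dt}H \le 0$ on $\partial D(t) = \{H = 0\}$ using the hypothesis $F_1(V^\varepsilon,u_2)\le G(V^\varepsilon)$, concludes via Lemma \ref{lem:lema2} that the half-plane $D(t)$ is flow-invariant for the ODE, and then transfers this to the PDE by Lemma \ref{lem:lema1} (Weinberger's convex invariant-set theorem). You instead linearise $w^\varepsilon := V^\varepsilon - u_1$, derive a scalar parabolic inequality with a bounded zero-order coefficient $b^\varepsilon$, and invoke a Phragm\'en--Lindel\"of maximum principle on $\R^d$ directly. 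Both are legitimate; the paper's route plugs into machinery it reuses elsewhere (e.g.\ the disc-invariance argument in the proof of Theorem \ref{ThmZS}), whereas yours is more self-contained and makes the role of the local Lipschitz bound and the uniform-in-$\varepsilon$ $L^\infty$ control explicit.

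There is, however, a technical slip in your exponential substitution. Writing $\widetilde w := e^{-Kt}w^\varepsilon$ yields
\[
\partial_t \widetilde w - \Delta \widetilde w = e^{-Kt}\bigl[(\partial_t - \Delta)w^\varepsilon - K w^\varepsilon\bigr] \ge e^{-Kt}\bigl(b^\varepsilon - K\bigr)w^\varepsilon = (b^\varepsilon - K)\,\widetilde w,
\]
and since $b^\varepsilon - K \le 0$ the sign of the right-hand side is unknown before you know the sign of $\widetilde w$; the substitution does \emph{not} reduce the problem to $\partial_t \widetilde w - \Delta \widetilde w \ge 0$ as claimed. The fix is easy and does not change the conclusion: apply the Phragm\'en--Lindel\"of maximum principle directly to the operator $\partial_t - \Delta - b^\varepsilon(t,x)$ with $b^\varepsilon \in L^\infty$ and $w^\varepsilon$ bounded (this version of the principle does not require first eliminating the zero-order term), or alternatively use the auxiliary barrier $w^\varepsilon + \delta e^{\mu t}(1+|x|^2)$ with $\mu$ large and send $\delta \to 0$. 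With that correction the rest of your argument --- uniform boundedness from $T \le T_*$, positivity of $w^\varepsilon(0,\cdot)\ge\varepsilon$, passage to the limit $\varepsilon\to 0^+$ by continuous dependence of ODE solutions --- is sound, and the symmetric treatment of part $(2)$ is fine.
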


\begin{remark} Notice that the conclusions in the previous lemma are only stated for the interval of time $[0,T]$, this is required since we still do not know whether or not $u(t)$ is globally defined. In any case, later we shall prove that under a particular condition on the initial data $g$, the solution is global and we can extend the previous lemma to any finite time $t$.
\end{remark}

\begin{proof}[Proof of Lemma \ref{comparisonprinciple}]
Let $H(t,u_1,u_2) : = u_1- V(t,V_0,\varepsilon)$, where $V$ solves \eqref{heat2}. Let $U$ be the  solution of \eqref{eqn:edog}. Firstly  note that
\begin{equation}\label{raro0}
\dfrac{d}{dt}H(t,U_1(t),U_2(t))=\dot{U}_1(t)-\dot V(t,V_0,\varepsilon) = F_1(U(t))-G(V(t,V_0,\varepsilon)).
\end{equation}
Secondly, we consider for each $t\in [0,T]$ the set
\[
D(t):=\left\{(u_1,u_2)\in \R^2 ~{} \Big| ~{} H(t,u_1,u_2)<0\right\}.
\]
Notice that in $D(t)$, $t\in [0,T]$, one has
\[
U_1(t) < V(t,V_0,\varepsilon)<+\infty.
\]
%{\color{red} ojo que no hemos asegurado que $U_2(t)$ es finito.}
For each time $t$ we have that $(U_1(t),U_2(t))\in \partial D(t)$ is equivalent to $U_1(t)=V(t,V_0,\varepsilon)$. In this boundary set one has from \eqref{raro0} and the hypothesis, 
\begin{equation}\label{raro}
\begin{aligned}
\dfrac{d}{dt}H(t,U_1(t),U_2(t))= &~{} F_1(U(t))-G(V(t,V_0,\varepsilon)) \\
=&~{} F_1(V(t,V_0,\varepsilon),U_2(t))  -G(V(t,V_0,\varepsilon))\leq 0.
\end{aligned}
\end{equation}
Since \eqref{raro} is satisfied, Lemma \ref{lem:lema2} ensures that $(D(t))_{t\in [0,T]}$ is an invariant set. Therefore  since,  $u_{1,0}(x)\leq V_0 <V(0,V_0,\varepsilon)$ is satisfied,   Lemma \ref{lem:lema1} implies  
\[
H(0,u_{0,1}, u_{0,2})=u_{0,1} - V(0,V_0,\varepsilon)<0.
\]  
Consequently $u_1(t,x)\in D(t)$, or equivalently, $u_1(t,x)< V(t,V_0,\varepsilon)$, at least in the interval of time $[0,T]$. Finally using that $\varepsilon>0$ is arbitrarily small one has 
\[
u_1(t,x)\leq V(t,V_0,0).
\]
This ends the proof of (1). The case (2) is analogous; therefore, we omit its proof.
%\medskip
%{\color{blue}Felipe: tengo la misma duda del tiempo $T_\varepsilon$ pues al mover $\varepsilon$ a cero te puedes quedar sin intervalo de tiempo}
\end{proof}

As a direct consequence of Lemma \ref{comparisonprinciple} the  following result is valid.

\begin{cor}\label{comparisonprinciple2}
Let $I\leq S$ be two fixed real numbers. Let $F=(F_1,F_2)$ be a locally Lipschitz vector function. Let $V(t,I,0)$ and $V(t,S,0)$ be the solutions of \eqref{heat2} with initial conditions $I$ and $S$, respectively, and $G:=F_1$.  Let $u=u(t,x)$ be the solution of \eqref{eqn:edpg} with given initial condition $u(0,x)=g(x)$ and defined on an interval of time $[0,T]$.  By restricting $T$ if necessary, $u(t)$, $V(t,I,0)$ and $V(t,S,0)$ are all assumed being defined on $[0,T]$. Then the following are satisfied:

\begin{enumerate}
\item[$(i)$] If $I\leq u_{0,1}(x)$ for all $x\in\R^d$,  one has
    \[
    V(t,I,0)\leq u_1(t,x),\quad \text{for all } (t,x)\in[0,T]\times \R^d.
    \]   
\item[$(ii)$]    If $u_{0,1}(x)\leq S$ for all $x\in\R^d$,  one has
    \[
    u_1(t,x)\leq V(t,S,0),\quad \text{for all } (t,x)\in[0,T]\times \R^d.
    \]   
\end{enumerate}    
\end{cor}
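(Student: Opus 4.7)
The plan is to reduce the statement directly to Lemma \ref{comparisonprinciple}, applied twice with the choice $G := F_1$ prescribed in the corollary, once with $V_0 = I$ and once with $V_0 = S$. The key simplification is that, since $G$ literally equals $F_1$, the comparison hypothesis of Lemma \ref{comparisonprinciple} collapses to an equality $F_1(V, u_2) = G(V)$ for every $u_2 \in \R$, so both the ``$\leq$'' and ``$\geq$'' forms of that hypothesis are satisfied simultaneously and trivially. In particular, no genuine sign condition on $F_1$ needs to be verified here.

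For part (ii), I would set $V_0 = S$. The assumption $u_{0,1}(x) \leq S$ for all $x\in\R^d$ is exactly the initial-condition hypothesis of case (1) of Lemma \ref{comparisonprinciple}, and the comparison hypothesis holds as discussed above. Applying that case yields
\[
u_1(t,x) \leq V(t, S, 0), \quad (t,x) \in [0,T]\times\R^d,
\]
which is the desired conclusion. For part (i), the argument is entirely symmetric: I would take $V_0 = I$, note that $I \leq u_{0,1}(x)$ is precisely the initial-condition hypothesis of case (2) of Lemma \ref{comparisonprinciple}, and deduce $V(t,I,0) \leq u_1(t,x)$ on $[0,T]\times\R^d$.

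The only point requiring a brief comment is the choice of time interval. The conclusions are stated on $[0,T]$ because one needs all three objects $u(\cdot)$, $V(\cdot, I, 0)$ and $V(\cdot, S, 0)$ to be simultaneously defined there; this is granted by the hypothesis ``by restricting $T$ if necessary''. There is no substantive obstacle in the argument: the corollary is essentially the two-sided specialization of Lemma \ref{comparisonprinciple} obtained when the scalar driving ODE is $\dot V = F_1(V)$, so that the two sides of the comparison hypothesis coincide and both bounds can be extracted at once.
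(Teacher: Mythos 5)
Your proposal is correct and follows essentially the same route as the paper: the paper's own proof is the one-line observation that with $G:=F_1$ the comparison hypothesis of Lemma~\ref{comparisonprinciple} becomes the identity $F_1(V(t,V_0,\varepsilon),u_2)=G(V(t,V_0,\varepsilon))$, so both sides of Lemma~\ref{comparisonprinciple} apply directly, exactly as you argue with $V_0=S$ for part~(ii) and $V_0=I$ for part~(i).
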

\begin{proof}
Direct from the Lemma \ref{comparisonprinciple} by noticing that $F_1(V(t,V_0,\varepsilon),u_2) = G(V(t,V_0,\varepsilon))$.
\end{proof}
 
We finish this section with the following result, direct consequence of Lemma \ref{comparisonprinciple}:
\begin{cor}\label{comparisonprinciple3}
  Let $u=u(t,x)$ be a solution of \eqref{eqn:edpg} defined in $[0,T]$. Let $M_1\leq M_2$ be given such that for some $j\in\{1,2\}$,
  \[
    I\leq u_j(t,x)\leq S, \quad \hbox{ and } \quad M_1\leq F_j(u)(t,x)\leq M_2.
   \]
  Then one has%we have% we have that:
    \[
     I +M_1 t\leq u_j(t,x)\leq S+M_2 t,\quad \hbox{for all} \quad (t,x)\in[0,T]\times \R^d.
    \]   
\end{cor}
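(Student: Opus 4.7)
The strategy is to interpret the hypothesis $M_1\leq F_j(u)(t,x)\leq M_2$ as a uniform bound on the source term of the linear inhomogeneous heat equation
\[
\partial_t u_j-\Delta u_j=F_j(u),
\]
and then to sandwich $u_j$ between the two explicit affine envelopes obtained by integrating the extremal constants. These envelopes are precisely the solutions of the trivial scalar ODEs $\dot V_+=M_2$, $V_+(0)=S$ and $\dot V_-=M_1$, $V_-(0)=I$, namely $V_+(t)=S+M_2 t$ and $V_-(t)=I+M_1 t$. In this sense the result really is a direct specialization of Lemma \ref{comparisonprinciple} (and Corollary \ref{comparisonprinciple2}) to the case of constant right-hand sides $G\equiv M_1$ and $G\equiv M_2$.

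For the upper bound I would introduce $w(t,x):=u_j(t,x)-S-M_2 t$. A direct computation, using the equation for $u_j$, yields
\[
\partial_t w-\Delta w=F_j(u)(t,x)-M_2\leq 0, \qquad w(0,x)=u_j(0,x)-S\leq 0,
\]
by the standing hypothesis. Since $u_j$ is uniformly bounded on $[0,T]\times\R^d$ by the assumption $I\leq u_j\leq S$, the function $w$ satisfies a parabolic differential inequality on the whole space with bounded growth at infinity, and the corresponding maximum principle (or equivalently the Duhamel representation together with the fact that the heat semigroup $e^{t\Delta}$ is positivity-preserving and fixes constants) gives $w\leq 0$ throughout. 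A compact Duhamel-type derivation reads
\[
u_j(t,x)=\bigl(e^{t\Delta} u_j(0,\cdot)\bigr)(x)+\int_0^t \bigl(e^{(t-s)\Delta} F_j(u)(s,\cdot)\bigr)(x)\,ds\leq S+M_2 t,
\]
which is the desired inequality.

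The lower bound $u_j(t,x)\geq I+M_1 t$ is obtained by the symmetric argument applied to $\widetilde w(t,x):=u_j(t,x)-I-M_1 t$, which is a supersolution of the heat equation with nonnegative initial datum. The only delicate point in either direction is invoking a comparison principle on the unbounded domain $\R^d$; however, this is precisely legitimized by the a priori boundedness of $u_j$ that is built into the hypothesis $I\leq u_j\leq S$, so no genuine obstacle appears. I do not expect any step to require more than routine verification once this framework is in place.
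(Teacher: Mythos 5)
Your proof is correct, but it follows a genuinely different route from the paper's. The paper derives Corollary \ref{comparisonprinciple3} as a one-line specialization of the invariant-domain machinery already set up: it takes $V_{M_1}(t,I,0)=I+M_1 t$ and $V_{M_2}(t,S,0)=S+M_2 t$ as solutions of \eqref{heat2} with $G\equiv M_1$ and $G\equiv M_2$, and then invokes Lemma \ref{comparisonprinciple} / Corollary \ref{comparisonprinciple2}, which in turn rest on the Weinberger--GNHY invariant-set lemmas (Lemmas \ref{lem:lema1} and \ref{lem:lema2}). You instead bypass that machinery entirely: you write the Duhamel representation $u_j(t)=e^{t\Delta}u_j(0)+\int_0^t e^{(t-s)\Delta}F_j(u)(s)\,ds$, use that $e^{t\Delta}$ is positivity-preserving and fixes constants, and sandwich $u_j$ directly; equivalently, you apply the parabolic maximum principle to $w=u_j-S-M_2 t$ and $\widetilde w=u_j-I-M_1 t$. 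Both arguments are valid. The paper's approach has the virtue of uniformity — one single comparison lemma is reused throughout Sections 5--8 — whereas yours is more elementary and self-contained (no invariant-domain apparatus needed), and it makes explicit where the a priori boundedness of $u_j$ enters: it is precisely what licenses the maximum principle on the unbounded domain $\R^d$, a point the paper leaves implicit inside Lemma \ref{lem:lema1}. Your remark about needing only $I\le u_j(0,x)\le S$ (rather than the stated uniform-in-$t$ bound) is also accurate and slightly sharpens the hypothesis.
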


\begin{proof}
 Let $V_{M_1}(t,I,0)$, $V_{M_2}(t,S,0)$ be solutions to \eqref{heat2} with $G\equiv M_1$ and $G\equiv M_2$, respectively. One has $V_{M_1}(t,I,0) = I +M_1 t$ and $V_{M_1}(t,I,0) = S +M_2 t$. From Corollary \ref{comparisonprinciple2} we conclude.
\end{proof}

\section{Uniform bounds}\label{sect:3}

The purpose of this section is to exhibit  key  bounds of  the zeta function and some  generalization to other class of functions. These bounds  will be important in the proof of our main local well-posedness result Theorem \ref{LDirichlet}. 
\begin{definition}    
Let $\beta,\varepsilon>0$ and $P$ as in Definition \ref{def:P}. We define the closed set $\mathcal B= \mathcal B_{\varepsilon, \beta, T}$ as follows:
\begin{equation}\label{B}
\mathcal B:=\left\{u\in X \Big| \text{ such that }\|u\|_X\leq \beta \quad \hbox{and}\quad  \inf_{x\in \R^d}P(u(t))\geq \varepsilon, ~ \text{for all } t\in [0,T]\right\}.
\end{equation}
See Fig. \ref{mB} for further details.
\end{definition}

\begin{figure}[H]
\tikzset{every picture/.style={line width=0.75pt}} %set default line width to 0.75pt        
    \begin{center}
        \tikzset{every picture/.style={line width=0.75pt}} %set default line width to 0.75pt                
        \begin{tikzpicture}[x=0.75pt,y=0.75pt,yscale=-0.55,xscale=0.55,scale=0.85]
        
%Shape: Circle [id:dp4464167410317119] 
\draw  [fill={rgb, 255:red, 255; green, 255; blue, 255 }  ,fill opacity=0.55 ] (334,248.6) .. controls (334,239.21) and (341.61,231.6) .. (351,231.6) .. controls (360.39,231.6) and (368,239.21) .. (368,248.6) .. controls (368,257.99) and (360.39,265.6) .. (351,265.6) .. controls (341.61,265.6) and (334,257.99) .. (334,248.6) -- cycle ;
%Straight Lines [id:da02130453848814029] 
\draw [color={rgb, 255:red, 208; green, 2; blue, 27 }  ,draw opacity=1 ]   (351,250) -- (351,234.6) ;
\draw [shift={(351,232.6)}, rotate = 90] [color={rgb, 255:red, 208; green, 2; blue, 27 }  ,draw opacity=1 ][line width=0.75]    (10.93,-3.29) .. controls (6.95,-1.4) and (3.31,-0.3) .. (0,0) .. controls (3.31,0.3) and (6.95,1.4) .. (10.93,3.29)   ;
%Shape: Square [id:dp6928440450151296] 
\draw   (150.2,101) -- (450,101) -- (450,400.8) -- (150.2,400.8) -- cycle ;
%Curve Lines [id:da5567193434664772] 
\draw [color={rgb, 255:red, 155; green, 155; blue, 155 }  ,draw opacity=1 ]   (239,292.6) .. controls (215,279.6) and (204.93,232.11) .. (228,210.6) .. controls (251.07,189.09) and (271,179.6) .. (280,198.6) .. controls (289,217.6) and (256,286.6) .. (297,292.6) .. controls (338,298.6) and (324,179.6) .. (340,175.6) .. controls (356,171.6) and (380.01,235.6) .. (372,268.6) .. controls (363.99,301.6) and (353,302.6) .. (347,296.6) .. controls (341,290.6) and (324.81,266.62) .. (310.87,243.68) .. controls (296.94,220.73) and (224,220.6) .. (237,241.6) ;
%Straight Lines [id:da07756167204598641] 
\draw    (500,99) -- (500,401.4) ;
\draw [shift={(500,401.4)}, rotate = 270] [color={rgb, 255:red, 0; green, 0; blue, 0 }  ][line width=0.75]    (0,5.59) -- (0,-5.59)   ;
\draw [shift={(500,99)}, rotate = 270] [color={rgb, 255:red, 0; green, 0; blue, 0 }  ][line width=0.75]    (0,5.59) -- (0,-5.59)   ;
%Straight Lines [id:da06935528652735856] 
\draw    (49,249.8) -- (599,249.8) ;
\draw [shift={(602,249.8)}, rotate = 180] [fill={rgb, 255:red, 0; green, 0; blue, 0 }  ][line width=0.08]  [draw opacity=0] (8.93,-4.29) -- (0,0) -- (8.93,4.29) -- cycle    ;
%Straight Lines [id:da9223280768992612] 
\draw    (301,429.8) -- (301,51.8) ;
\draw [shift={(301,48.8)}, rotate = 90] [fill={rgb, 255:red, 0; green, 0; blue, 0 }  ][line width=0.08]  [draw opacity=0] (8.93,-4.29) -- (0,0) -- (8.93,4.29) -- cycle    ;

% Text Node
% Text Node
\draw (505,190.4) node [anchor=north west][inner sep=0.75pt]   {$2\beta $};
% Text Node
\draw (379,213.4) node [anchor=north west][inner sep=0.75pt]    {$\textcolor[rgb]{0.82,0.01,0.11}{\varepsilon }$};
% Text Node
\draw (571,210) node [anchor=north west][inner sep=0.75pt]    {$\re$};
% Text Node
\draw (221,145.4) node [anchor=north west][inner sep=0.75pt]  [color={rgb, 255:red, 155; green, 155; blue, 155 }  ,opacity=1 ]  {$\mathcal{I}[ g]$};
% Text Node
\draw (344,268.4) node [anchor=north west][inner sep=0.75pt]    {$1$};
% Text Node
\draw (160,107.4) node [anchor=north west][inner sep=0.75pt]    {$B_{\beta ,\varepsilon }$};
% Text Node
\draw (240,52.4) node [anchor=north west][inner sep=0.75pt]    {$\ima$};

\end{tikzpicture}

        \end{center}
        \caption{The set $\mathcal B$ defined in \eqref{B}, notice that $g$ must be far from the pole and bounded by a value $\beta$.}\label{mB}
\end{figure}
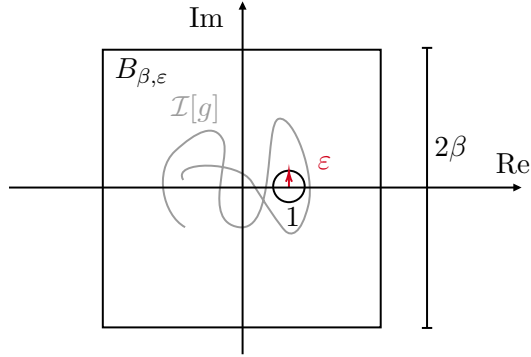
In the next $[-\beta,\beta]^2$ denotes  the closed square in the complex plane with length $2\beta>0$. We start  with a classical estimate for analytic functions. 

 \begin{lemma}\label{lema:lipzholo2}
  Let $h=h(s)$ be an analytic function on $\Com$. Then for all $u,v\in \mathcal B$, one has
  \begin{equation}\label{lema:lipzholo2_b}
  \|h(u)-h(v)\|_X\leq \sqrt{2} \left( \sup_{z\in [-\beta,\beta]^2}  |h'(z)| \right) \|u-v\|_X.
  \end{equation}
 \end{lemma}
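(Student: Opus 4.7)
The plan is to combine a pointwise holomorphic Lipschitz estimate on the convex square $[-\beta,\beta]^2 \subset \Com$ with the equivalence between the pointwise complex modulus and the $Y$-norm.

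First, I would observe that the condition $\|u\|_X \leq \beta$ ensures that at every $(t,x)\in[0,T]\times\R^d$ both $\re u(t,x)$ and $\ima u(t,x)$ lie in $[-\beta,\beta]$, so that $u(t,x),v(t,x)\in[-\beta,\beta]^2$; the same holds for the entire segment joining them, since this square is convex. Since $h$ is entire, the fundamental theorem of calculus along that segment yields
\begin{equation*}
h(u(t,x))-h(v(t,x)) = (u(t,x)-v(t,x))\int_0^1 h'(v(t,x)+\tau(u(t,x)-v(t,x)))\,d\tau,
\end{equation*}
and therefore
\begin{equation*}
|h(u(t,x))-h(v(t,x))| \leq \left(\sup_{z\in[-\beta,\beta]^2}|h'(z)|\right)|u(t,x)-v(t,x)|.
\end{equation*}

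Next I would translate the pointwise complex modulus into the $Y$-norm. For any $w = w_1 + i w_2 \in \Com$ one has $|w| = \sqrt{w_1^2+w_2^2} \leq \sqrt{2}\max(|w_1|,|w_2|)$. Applied to $w = u(t,x)-v(t,x)$ this gives
\begin{equation*}
|u(t,x)-v(t,x)| \leq \sqrt{2}\max(\|u_1(t)-v_1(t)\|_{L^\infty},\|u_2(t)-v_2(t)\|_{L^\infty}) \leq \sqrt{2}\|u-v\|_X.
\end{equation*}
Finally, since $|\re(h(u)-h(v))|$ and $|\ima(h(u)-h(v))|$ are both bounded by the pointwise modulus $|h(u)-h(v)|$, the $Y$-norm of $h(u(t))-h(v(t))$ is itself dominated by $\sup_{x\in\R^d}|h(u(t,x))-h(v(t,x))|$. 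Combining these estimates and taking the supremum over $t\in[0,T]$ produces \eqref{lema:lipzholo2_b}.

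The only subtlety is the $\sqrt{2}$ factor, which appears precisely because the $Y$-norm controls the real and imaginary parts separately while $h$ is evaluated at the complex value $u$; apart from this bookkeeping, the proof is a direct Lipschitz estimate for an entire function on a convex compact set, and I do not anticipate any real obstacle. The set $\mathcal{B}$ defined in \eqref{B} enters only through the uniform bound $\|u\|_X,\|v\|_X \leq \beta$; the condition $\inf_x P(u(t)) \geq \varepsilon$ plays no role in this lemma.
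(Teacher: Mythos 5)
Your proof is correct, and it follows a genuinely different route from the one in the paper. The paper works with the real and imaginary parts $h_1,h_2$ separately: it applies the real multivariable mean value theorem to each (producing intermediate points $\xi$, $\tilde\xi$ on the segment), estimates $|\nabla h_i(\xi)\cdot(s-p)|$ by Cauchy--Schwarz, and then invokes the Cauchy--Riemann equations to recover $\sqrt{h_{ix}^2+h_{iy}^2}=|h'|$. You instead use the holomorphic fundamental theorem of calculus along the segment in $\Com$, which directly yields the pointwise Lipschitz bound $|h(u)-h(v)|\leq \bigl(\sup_{[-\beta,\beta]^2}|h'|\bigr)\,|u-v|$ without ever splitting $h$ into real and imaginary parts or invoking Cauchy--Riemann. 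The $\sqrt{2}$ then enters purely from the algebra relating the complex modulus to the $Y$-norm ($|w|\leq\sqrt 2\max(|w_1|,|w_2|)$ and $\max(|w_1|,|w_2|)\leq|w|$), whereas in the paper it comes from the step $\sqrt{(s_1-p_1)^2+(s_2-p_2)^2}\leq\sqrt2\max_i|s_i-p_i|$. Both arguments are sound and both hinge on convexity of the square $[-\beta,\beta]^2$; your version is a bit shorter and avoids the two separate intermediate points, while the paper's version is entirely real-variable apart from the Cauchy--Riemann step. You are also right that only the bound $\|u\|_X,\|v\|_X\leq\beta$ from $\mathcal B$ is used here and that the constraint $\inf_x P(u(t))\geq\varepsilon$ plays no role in this lemma.
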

 
 \begin{remark}
 Notice that \eqref{lema:lipzholo2_b} is an estimate that only considers functions $u,v\in B$, and locally, the region $[-\beta,\beta]^2$. In that sense, it is an estimate that do not avoid the problem of the zeta, a pole at $s=1$. Later, this estimate will be improved.
 \end{remark}
 
 \begin{proof}[Proof of Lemma \ref{lema:lipzholo2}]
 For all $s,p\in [-\beta,\beta]^2$, $s=s_1+is_2$, $p=p_1+ip_2$, there exist $\xi \in [-\beta,\beta]^2$ such that
\[
|h_1(s)-h_1(p)|=   |\nabla h_1(\xi)\cdot (s-p)|.
\]
From the Cauchy-Schwarz inequality, 
\[
|h_1(s)-h_1(p)|\leq \sqrt{h_{1x}(\xi)^2+h_{1y}(\xi)^2}\sqrt{(s_1-p_1)^2+(s_2-p_2)^2}.
\]
By the Cauchy-Riemann conditions, we obtain
\[
|h_1(s)-h_1(p)|\leq \sqrt{h_{1x}(\xi)^2+h_{2x}(\xi)^2}\sqrt{(s_1-p_1)^2+(s_2-p_2)^2},
\]
hence,
\[
|h_1(s)-h_1(p)|\leq |h'(\xi)|\sqrt{(s_1-p_1)^2+(s_2-p_2)^2},
\]
and
\[
|h_1(s)-h_1(p)|\leq \sqrt{2} |h'(\xi)|\max_{i\in\{1,2\}}\{|s_i-p_i|\}.
\]
Similarly, there exists $\tilde\xi\in [-\beta,\beta]^2$ such that
\[
|h_2(s)-h_2(p)|\leq \sqrt{2} |h'(\tilde\xi)|\max_{i\in\{1,2\}}\{|s_i-p_i|\}.
\]
Therefore 
\[
\max_{i\in\{1,2\}}\{|h_i(s)-h_i(p)|\}\leq  \sqrt{2}  \max\{|h'(\xi)|,|h'(\tilde\xi)|\}\max_{i\in\{1,2\}}\{|s_i-p_i|\}.
\]
Let $I= (0,T]$. If $u,v\in \mathcal B$, then for each $x\in\mathbb R^d$ one has that $\{u(t,x)\}_{t\in I},\;\{v(t,x)\}_{t\in I}\subseteq [-\beta,\beta]^2$ are included in a convex subset of the complex plane. Therefore there are $\xi_{t,x},\tilde \xi_{t,x}\in [-\beta,\beta]^2$ such that
\[
\begin{aligned}
& \max_{i\in\{1,2\}}\{|h_i(u(t,x))-h_i(v(t,x))|\} \\
&~{} \quad \leq \sqrt{2}\max\{|h'(\xi_{t,x})|,|h'(\tilde \xi_{t,x})|\}\max_{i\in\{1,2\}} |u_i(t,x)-v_i(t,x)| .
\end{aligned}
\]
Since $\xi_{t,x}, \tilde\xi_{t,x}\in [-\beta,\beta]^2$ we obtain
\[
\max_{i\in\{1,2\}} |h_i(u(t,x))-h_i(v(t,x))| \leq \sqrt{2} \left( \sup_{z\in [-\beta,\beta]^2}  |h'(z)| \right) \max_{i\in\{1,2\}} |u_i(t,x)-v_i(t,x)| .
\]
Taking supremum norm on $x$ and $t$ respectively, we get
\[
\|h(u)-h(v)\|_{X}\leq \sqrt{2} \left( \sup_{z\in [-\beta,\beta]^2}  |h'(z)| \right) \|u-v\|_X,
\]
as desired.
\end{proof}

\subsection{The Hurwitz zeta case} Recall the Hurwitz's zeta function $\zeta(s,\alpha)$ introduced in \eqref{zeta_general}.  Since $\zeta(s,\alpha)$ has a unique simple pole at $s=1$ (with residue $1$) from equation \eqref{zetahext} we obtain the following decomposition
 \begin{equation}\label{deco_zeta_h}
 \zeta(s,\alpha)=\dfrac{1}{s-1}+d(s,\alpha)+h(s,\alpha),
 \end{equation}
 where
 \begin{equation}\label{d}
 d(s,\alpha) : =\dfrac{\alpha^{1-s}-1}{s-1}+\dfrac{1}{2\alpha^s},
\end{equation}
and
\begin{equation}\label{h}
h(s,\alpha):=2\int_0^\infty\dfrac{\sin(s\arctan{\frac{t}{\alpha}})}{(\alpha^2+t^2)^{\frac{s}{2}}(e^{2\pi t}-1)}dt.
\end{equation}
Note that $d$ represents an entire function, since  $d(s,\alpha)$ can be analytically extended to the full complex plane. In the next lines, $d$ and $h$ will denote the functions in \eqref{d} and \eqref{h}. % describe as above.

\medskip

We will prove some explicit bounds on $d$ and $h$, and its derivatives. These estimates are better than standard ones obtained by invoking the fact that one works with continuous functions on compact subsets of the complex plane, because the better estimates one gets from the nonlinearity, the more precise local and global well-posedness results one obtains. 

\medskip

The following two lemmas are of technical and will be proved in Appendix \ref{app:A} and \ref{app:B}, respectively:

\begin{lemma}[Explicit bounds on $h$ and $h'$]\label{lemma:boundh}
Let $h$ be as in \eqref{h}. There exist $H_{1,\alpha,\beta},H_{2,\alpha,\beta}>0$, only depending on $\alpha$ and $\beta$, and such that for any $s \in [-\beta,\beta]^2$,
    \[
\left| h|_{[-\beta,\beta]^2}(s,\alpha) \right|\leq H_{1,\alpha,\beta} \quad \text{and}\quad   \left| h' |_{[-\beta,\beta]^2}(s,\alpha)\right|\leq H_{2,\alpha,\beta}.
    \]
\end{lemma}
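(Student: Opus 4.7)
The approach is to produce a pointwise majorant of the integrand defining $h(s,\alpha)$ that is uniform in $s\in[-\beta,\beta]^2$ and integrable on $(0,\infty)$; the stated bound on $|h|$ then follows by integration. The estimate for $h'$ is obtained analogously after differentiating under the integral, which is legitimate once a uniform integrable majorant for the differentiated integrand is in hand.

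Two basic pointwise estimates carry the argument. First, writing $s=s_1+is_2$ and using the identity $|\sin(x+iy)|^2=\sin^2 x+\sinh^2 y$, one gets $|\sin(s\arctan(t/\alpha))|\leq\cosh(|s_2|\arctan(t/\alpha))\leq\cosh(\pi\beta/2)$, since $\arctan(t/\alpha)\in[0,\pi/2)$. Near $t=0$ this crude bound fails to compensate for the simple pole of $1/(e^{2\pi t}-1)$, so it must be refined by $|\sin w|\leq |w|\cosh|w|$, which combined with $\arctan(t/\alpha)\leq t/\alpha$ yields $|\sin(s\arctan(t/\alpha))|\leq C(\beta)\cdot t/\alpha$ for $t\in[0,1]$. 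Second, since $|a^z|=a^{\Re z}$ for $a>0$,
\[
\bigl|(\alpha^2+t^2)^{-s/2}\bigr|=(\alpha^2+t^2)^{-s_1/2}\leq (\alpha^2+t^2)^{\beta/2}+(\alpha^2+t^2)^{-\beta/2}
\]
uniformly in $s_1\in[-\beta,\beta]$; both terms on the right are needed because $s_1$ may be negative.

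Splitting $\int_0^\infty=\int_0^1+\int_1^\infty$, on $[1,\infty)$ the inequality $1/(e^{2\pi t}-1)\leq 2e^{-2\pi t}$ provides exponential decay that dominates the polynomial factor $(\alpha^2+t^2)^{\beta/2}$, so this piece is finite and explicit in $\alpha$ and $\beta$. On $[0,1]$ the refined linear bound on $\sin$ makes the integrand controlled by $C(\alpha,\beta)\cdot t/(e^{2\pi t}-1)$ times a bounded factor, and $t/(e^{2\pi t}-1)$ extends continuously to $1/(2\pi)$ at $t=0$; hence this piece is finite as well. Adding the two produces an explicit $H_{1,\alpha,\beta}$. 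For $h'$, differentiating under the integral yields
\[
h'(s,\alpha)=2\int_0^\infty\frac{\arctan(t/\alpha)\cos(s\arctan(t/\alpha))-\tfrac12\log(\alpha^2+t^2)\sin(s\arctan(t/\alpha))}{(\alpha^2+t^2)^{s/2}(e^{2\pi t}-1)}\,dt,
\]
and the same scheme applies: the cosine obeys $|\cos z|\leq\cosh(\Im z)$ just like the sine, $\arctan(t/\alpha)\leq \pi/2$ is harmless, and $|\log(\alpha^2+t^2)|$ grows only sublinearly at infinity while remaining bounded on $[0,1]$ (since $\alpha>0$ is fixed). Repeating the splitting argument produces $H_{2,\alpha,\beta}$.

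The delicate point, which guides the whole proof, is the simultaneous control of the two competing pathologies of the integrand: the simple pole of $1/(e^{2\pi t}-1)$ at $t=0$, which is neutralized only by exploiting the vanishing of $\sin(s\arctan(t/\alpha))$ (and of $\log(\alpha^2+t^2)\sin(s\arctan(t/\alpha))$ for $h'$) as $t\to 0$; and the potential polynomial growth of $(\alpha^2+t^2)^{-s_1/2}$ as $t\to\infty$ when $\Re s<0$, which must be absorbed by the exponential factor $e^{-2\pi t}$. Carefully tracking both estimates, rather than invoking abstract continuity on a compact set, is what yields the \emph{explicit} dependence of $H_{1,\alpha,\beta}$ and $H_{2,\alpha,\beta}$ on the parameters $\alpha$ and $\beta$ required later in the paper.
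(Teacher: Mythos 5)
Your proof is correct and follows essentially the same strategy as the paper's (Appendix~\ref{app:A}): split $\int_0^\infty$ at $t=1$, majorize $|\sin(s\arctan(t/\alpha))|$ by a bound vanishing linearly in $t$ to neutralize the pole of $1/(e^{2\pi t}-1)$ at the origin, control $|(\alpha^2+t^2)^{-s/2}|$ by its worst case over $\Re s\in[-\beta,\beta]$, and absorb the resulting polynomial growth at infinity with $e^{-ct}$. The only cosmetic difference is that the paper performs the change of variable $w=\arctan(t/\alpha)$ before bounding $h'$ so as to work on the finite interval $[0,\pi/2)$ (and, incidentally, your displayed formula for $h'$ with the $\tfrac12\log(\alpha^2+t^2)$ factor is the correct one, whereas the paper's displayed intermediate expression contains a typo that is silently corrected in the subsequent estimates).
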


\begin{lemma}[Explicit bounds on $d$ and $d'$]\label{lemma:boundd}
 Let $d$ be as in \eqref{d}. Then there exist $D_{1,\alpha,\beta},D_{2,\alpha,\beta}>0$ only depending on $\alpha$ and $\beta$, such that
\[
\|d|_{[-\beta,\beta]^2}(\cdot,\alpha)\|_Y\leq D_{1,\alpha,\beta} \quad \text{and} \quad \|d'|_{[-\beta,\beta]^2}(\cdot,\alpha)\|_Y\leq D_{2,\alpha,\beta}.
\]
\end{lemma}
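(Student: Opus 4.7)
The plan is to eliminate the apparent singularity of $d(s,\alpha)$ at $s=1$ by an integral representation, after which the desired bounds on the square $[-\beta,\beta]^{2}$ reduce to the standard estimate $|\alpha^{w}|=\alpha^{\re w}$ valid for $\alpha>0$ and $w\in\Com$. Applying the elementary identity $e^{z}-1=z\int_{0}^{1}e^{tz}\,dt$ with $z=(1-s)\log\alpha$, I would rewrite
\[
\frac{\alpha^{1-s}-1}{s-1}\;=\;-\log\alpha\int_{0}^{1}\alpha^{t(1-s)}\,dt, \qquad s\in\Com,
\]
which both confirms the entireness of $d$ noted after \eqref{d} and yields the explicit form
\[
d(s,\alpha)\;=\;-\log\alpha\int_{0}^{1}\alpha^{t(1-s)}\,dt + \frac{\alpha^{-s}}{2}.
\]

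Next, for $s\in[-\beta,\beta]^{2}$ and $\alpha\in(0,1]$, I would use that $x\mapsto\alpha^{x}$ is non-increasing on $\R$ together with $\re(t(1-s))\geq t(1-\beta)$ and $\re(-s)\geq -\beta$ to obtain the pointwise bounds
\[
|\alpha^{t(1-s)}|\;\leq\;\max(1,\alpha^{1-\beta}), \qquad |\alpha^{-s}|\;\leq\;\alpha^{-\beta}.
\]
Substitution then gives $|d(s,\alpha)|\leq |\log\alpha|\max(1,\alpha^{1-\beta}) + \tfrac{1}{2}\alpha^{-\beta}=:D_{1,\alpha,\beta}$, which after taking the supremum over $s$ establishes the first estimate. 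For $d'$ I would differentiate under the integral sign (legitimate since the integrand is continuous in $t$ and $C^{1}$ in $s$ on the compact square), producing
\[
d'(s,\alpha)\;=\;(\log\alpha)^{2}\int_{0}^{1} t\,\alpha^{t(1-s)}\,dt - \frac{\log\alpha}{2}\,\alpha^{-s},
\]
and the same pointwise bounds yield $|d'(s,\alpha)|\leq(\log\alpha)^{2}\max(1,\alpha^{1-\beta})+\tfrac{|\log\alpha|}{2}\alpha^{-\beta}=:D_{2,\alpha,\beta}$, completing the proof.

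The only real obstacle is controlling the factor $(s-1)^{-1}$ near $s=1$, where any naive term-by-term estimate diverges. The integral formula for $(\alpha^{1-s}-1)/(s-1)$ disposes of this cleanly, and from that point the argument is pure bookkeeping of $\log\alpha$ factors and exponent ranges. I expect no further subtleties, and the resulting constants $D_{1,\alpha,\beta},D_{2,\alpha,\beta}$ are explicit functions of $\alpha$ and $\beta$ alone, as required.
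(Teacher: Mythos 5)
Your proof is correct, and it takes a genuinely cleaner route than the paper's. Both arguments hinge on the same rewriting: the paper sets $f(u)=\frac{e^u-1}{u}$ and writes $d(s,\alpha)=(-\ln\alpha)\,f(-\ln\alpha\,(s-1))+\tfrac12\alpha^{-s}$, which is the same observation you encode via the identity $e^{z}-1=z\int_0^1 e^{tz}\,dt$. Where you part ways is in how the resulting entire function gets bounded on $[-\beta,\beta]^2$: the paper proves a separate technical lemma bounding $f'$ by computing $f'(s)\overline{f'(s)}$ explicitly on the boundary of a square and invoking the maximum principle for the harmonic components of $f'$, whereas you simply push the modulus through the integral and use $|\alpha^{w}|=\alpha^{\re w}$, yielding $|\alpha^{t(1-s)}|\le\max(1,\alpha^{1-\beta})$ and $|\alpha^{-s}|\le\alpha^{-\beta}$. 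Your approach avoids the auxiliary harmonicity argument entirely, applies uniformly to both $d$ and $d'$ (the paper only spells out the $d'$ bound), and produces more transparent constants $D_{1,\alpha,\beta}$ and $D_{2,\alpha,\beta}$. One microscopic point: in the $d'$ bound you dropped the factor $\int_0^1 t\,dt=\tfrac12$, which is still a valid (just slightly looser) upper bound, so no harm done.
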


The next lemma establish  estimates of the Hurwitz's zeta function:
\begin{lemma}\label{lemma:boundZ}
 Let $u,v\in \mathcal B$. Then there exists $Z_{1,\alpha,\beta,\varepsilon}, Z_{2,\alpha,\beta,\varepsilon}>0$, only depending on $\alpha, \beta$ and $\varepsilon$, and such that 
\[
\|  \zeta(u,\alpha)\|_X\leq Z_{1,\alpha,\beta,\varepsilon}\quad \text{and}\quad
\|  \zeta(u,\alpha)-\zeta(v,\alpha)\|_X\leq Z_{2,\alpha,\beta,\varepsilon}\|u-v\|_X.
\]
\end{lemma}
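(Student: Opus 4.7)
The plan is to exploit the decomposition \eqref{deco_zeta_h}, namely $\zeta(s,\alpha)=\frac{1}{s-1}+d(s,\alpha)+h(s,\alpha)$, and to estimate each of the three pieces separately on the set $\mathcal{B}$. The only term that is not already controlled by the preceding Lemmas \ref{lemma:boundh} and \ref{lemma:boundd} is the polar part $\frac{1}{s-1}$, and this is precisely where the constraint $\inf_{x\in\R^d}P(u(t))\geq \varepsilon$ enters.

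First, I would convert the $P$-condition into a lower bound on $|u-1|$: for any complex $s=s_1+is_2$, one has
\[
|s-1|=\sqrt{(s_1-1)^2+s_2^2}\geq \frac{1}{\sqrt{2}}\bigl(|s_1-1|+|s_2|\bigr)=\frac{1}{\sqrt{2}}P(s).
\]
Hence for every $u\in\mathcal{B}$ and every $(t,x)\in[0,T]\times\R^d$,
\[
|u(t,x)-1|\geq \frac{\varepsilon}{\sqrt{2}}.
\]
This immediately yields the pointwise bound $|1/(u-1)|\leq \sqrt{2}/\varepsilon$. Combining with Lemmas \ref{lemma:boundh} and \ref{lemma:boundd} (whose bounds only depend on $\alpha,\beta$) and taking real/imaginary parts one gets
\[
\|\zeta(u,\alpha)\|_X \leq \frac{\sqrt{2}}{\varepsilon}+D_{1,\alpha,\beta}+H_{1,\alpha,\beta}=:Z_{1,\alpha,\beta,\varepsilon},
\]
which proves the first estimate.

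For the Lipschitz-type bound, I would write
\[
\zeta(u,\alpha)-\zeta(v,\alpha)=\frac{v-u}{(u-1)(v-1)}+\bigl(d(u,\alpha)-d(v,\alpha)\bigr)+\bigl(h(u,\alpha)-h(v,\alpha)\bigr).
\]
For the polar difference, the lower bound $|u-1|,|v-1|\geq \varepsilon/\sqrt{2}$ together with the trivial estimate $|u-v|\leq \sqrt{2}\,\|u-v\|_Y$ gives
\[
\left|\frac{1}{u-1}-\frac{1}{v-1}\right|\leq \frac{2|u-v|}{\varepsilon^2}\leq \frac{2\sqrt{2}}{\varepsilon^2}\|u-v\|_Y,
\]
and in particular the real and imaginary parts of this quantity are each bounded by $\frac{2\sqrt{2}}{\varepsilon^2}\|u-v\|_Y$. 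For the remaining two differences, since $d(\cdot,\alpha)$ is entire and $h(\cdot,\alpha)$ is analytic on the square $[-\beta,\beta]^2$, I would apply Lemma \ref{lema:lipzholo2} directly, using the derivative bounds from Lemmas \ref{lemma:boundh} and \ref{lemma:boundd}:
\[
\|d(u,\alpha)-d(v,\alpha)\|_X \leq \sqrt{2}\,D_{2,\alpha,\beta}\,\|u-v\|_X,\qquad \|h(u,\alpha)-h(v,\alpha)\|_X\leq \sqrt{2}\,H_{2,\alpha,\beta}\,\|u-v\|_X.
\]
Summing the three contributions gives
\[
\|\zeta(u,\alpha)-\zeta(v,\alpha)\|_X\leq \Bigl(\tfrac{2\sqrt{2}}{\varepsilon^2}+\sqrt{2}\,D_{2,\alpha,\beta}+\sqrt{2}\,H_{2,\alpha,\beta}\Bigr)\|u-v\|_X=:Z_{2,\alpha,\beta,\varepsilon}\|u-v\|_X,
\]
which is the second estimate. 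The only mildly delicate step is the pole control, but once the inequality $|s-1|\geq P(s)/\sqrt{2}$ is observed the argument reduces to assembling pieces that are already at hand; no further subtlety is expected.
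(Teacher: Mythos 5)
Your proposal is correct and follows essentially the same route as the paper: decompose $\zeta(s,\alpha)=\frac{1}{s-1}+d(s,\alpha)+h(s,\alpha)$ via \eqref{deco_zeta_h}, control $d$ and $h$ by Lemmas \ref{lemma:boundd} and \ref{lemma:boundh} together with the Lipschitz estimate of Lemma \ref{lema:lipzholo2}, and control the polar term using the $\mathcal{B}$-constraint. In fact your handling of the pole is slightly more careful than the paper's: the paper asserts ``$\varepsilon\leq\|u-1\|_X$ implies $\|1/(u-1)\|_X\leq 1/\varepsilon$'', which as written is not a valid implication (a lower bound on a sup-type norm does not yield a pointwise lower bound on $|u-1|$), whereas you correctly convert the pointwise condition $\inf_x P(u(t))\geq\varepsilon$ into the pointwise bound $|u(t,x)-1|\geq\varepsilon/\sqrt{2}$ via the elementary inequality $\sqrt{a^2+b^2}\geq(|a|+|b|)/\sqrt{2}$, picking up harmless extra factors of $\sqrt{2}$ in the final constants. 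Since the lemma only asserts the existence of constants depending on $\alpha,\beta,\varepsilon$, this discrepancy is immaterial.
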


\begin{proof} 

Let $h$ and $d$ be described as in \eqref{h} and \eqref{d}. From Lemmas \ref{lema:lipzholo2_b} and \ref{lemma:boundh} we have that
\[
\begin{aligned}
\|h(u,\alpha)-h(v,\alpha)\|_{X}\leq & \sqrt{2}\|h'|_{[-\beta,\beta]^2}( \cdot ,\alpha)\|_{Y}\|u-v\|_X \\
\leq & \sqrt{2}H_{2,\alpha,\beta}\|u-v\|_X.
\end{aligned}
\]
Also from  Lemma \ref{lemma:boundd} we obtain
\[
\begin{aligned}
\|d(u,\alpha)-d(v,\alpha)\|_{X}\leq & \sqrt{2}\|d'|_{[-\beta,\beta]^2}(\cdot ,\alpha)\|_{Y}\|u-v\|_X \\
\leq &~{}\sqrt{2} D_{2,\alpha,\beta}\|u-v\|_X.
\end{aligned}
\]
On the other hand,  for all $u\in \mathcal B$ we know that 
$\varepsilon\leq \|u-1\|_X$ implies $\left\|\frac{1}{u-1}\right\|_X\leq \frac{1}{\varepsilon}.$
Thus 
\[
\left\|\dfrac{1}{u-1}-\dfrac{1}{v-1}\right\|_X=\left \|\dfrac{v-u}{(u-1)(v-1)}\right\|_X\leq\dfrac{1}{\varepsilon^2}\|v-u\|_X.
\]
Therefore, we get from \eqref{deco_zeta_h}, Lemmas \ref{lemma:boundd} and \ref{lemma:boundh},
\[
\begin{aligned}    
\|\zeta(u,\alpha)\|_X\leq &~{} \dfrac{1}{\varepsilon}+H_{1,\alpha,\beta}+D_{1,\alpha,\beta}=: Z_{1,\alpha,\beta,\varepsilon}.
\end{aligned}
\]
Finally,
\[
\begin{aligned}
\| \zeta(u,\alpha)-\zeta(v,\alpha)\|_{X} \leq  &~{} \dfrac{1}{\varepsilon^2} \|u-v\|_{X} +\sqrt{2}H_{2,\alpha,\beta}\|u-v\|_X+\sqrt{2}D_{2,\alpha,\beta}\|u-v\|_X \\
= :&~{} Z_{2,\alpha,\beta,\varepsilon}\|u-v\|_{X}.
\end{aligned}
\]
The proof is complete.
\end{proof}
\begin{remark}
    Using that all auxiliary constants are decreasing in $\alpha$, we have that the constants $Z_{2,\alpha,\beta,\varepsilon}$ and $Z_{1,\alpha,\beta,\varepsilon}$ are decreasing in $\alpha$.
\end{remark}

\subsection{The general Dirichlet $L$-function case} Lemma \ref{lemma:boundZ} can be extended to  general Dirichlet zeta functions. Indeed, thanks to Lemma \ref{lema:formulalsum}, $L_m$ can be written as a linear combination of Hurwitz zeta functions. 

\begin{lemma}\label{lemma:boundL}
 Let $u,v\in B$. Then there exist $M_{1,\beta,\varepsilon,m},M_{2,\beta,\varepsilon,m}>0$, only depending on $\beta, \varepsilon$ and $m$, such that:
\[
\|L_{m}(u)\|_X\leq M_{1,\beta,\varepsilon,m}, \qquad \text{and}\qquad 
\|L_{m}(u)-L_{m}(v)\|_X\leq M_{2,\beta,\varepsilon,m}\|u-v\|_X.
\]
\end{lemma}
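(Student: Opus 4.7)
The plan is to leverage the representation formula from Lemma \ref{lema:formulalsum}, which expresses $L_m$ as a finite linear combination of Hurwitz zeta functions with shifts $\alpha = r/m$, weighted by the character $\chi_m(r)$ and the entire factor $f_m(s) := m^{-s}$. Since each $\zeta(\cdot, r/m)$ has its unique (simple) pole at $s = 1$ regardless of $r$, the hypothesis $u \in \mathcal{B}$, which enforces $\inf_x P(u(t,x)) \geq \varepsilon$, guarantees that all such Hurwitz zetas are well-controlled simultaneously by Lemma \ref{lemma:boundZ}.

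First I would record two elementary bounds on the entire function $f_m(s) = m^{-s} = e^{-s \log m}$: on $[-\beta,\beta]^2$ one has $|f_m(s)| \leq m^\beta$ and $|f_m'(s)| \leq \log(m)\, m^\beta$ (with the bound being trivial for $m=1$). Together with the fact that $|\chi_m(r)| \leq 1$, the triangle inequality applied to \eqref{formula_Lm} yields
\[
\|L_m(u)\|_X \leq m^\beta \sum_{r=1}^m \|\zeta(u,r/m)\|_X \leq m^{\beta+1} \max_{1\leq r \leq m} Z_{1,r/m,\beta,\varepsilon} =: M_{1,\beta,\varepsilon,m},
\]
which settles the first estimate.

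For the Lipschitz bound, the natural idea is the product-rule decomposition
\[
L_m(u) - L_m(v) = f_m(u) \sum_{r=1}^m \chi_m(r)\bigl[\zeta(u,r/m) - \zeta(v,r/m)\bigr] + \bigl[f_m(u) - f_m(v)\bigr]\sum_{r=1}^m \chi_m(r) \zeta(v,r/m).
\]
The first sum is controlled by the Lipschitz estimate of Lemma \ref{lemma:boundZ} (with Lipschitz constant $\max_r Z_{2,r/m,\beta,\varepsilon}$), multiplied by the uniform bound $m^\beta$ on $f_m$. For the second sum, since $f_m$ is entire, Lemma \ref{lema:lipzholo2} applied to $f_m$ on $[-\beta,\beta]^2$ gives $\|f_m(u)-f_m(v)\|_X \leq \sqrt{2}\log(m)\, m^\beta \|u-v\|_X$, while $\sum_r \chi_m(r)\zeta(v,r/m)$ is bounded by $m \cdot \max_r Z_{1,r/m,\beta,\varepsilon}$ via Lemma \ref{lemma:boundZ}. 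Combining and absorbing the universal multiplicative constant coming from the fact that $\|fg\|_X \leq 2\|f\|_X\|g\|_X$ for the product of complex-valued bounded functions, one obtains a constant $M_{2,\beta,\varepsilon,m}$ depending only on $\beta, \varepsilon, m$, as required.

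I do not expect any serious obstacle here: the work was done in Lemma \ref{lemma:boundZ}, and the present lemma is essentially a bookkeeping exercise combined with the observation that the twisting factor $m^{-s}$ is entire and hence trivially Lipschitz on the compact square $[-\beta,\beta]^2$. The only minor point of care is tracking the product-rule splitting for the Lipschitz estimate, so that the pole-avoidance condition $u,v \in \mathcal{B}$ is used only through Lemma \ref{lemma:boundZ} and not through $f_m$, which has no singularity.
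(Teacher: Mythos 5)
Your proposal is correct and follows essentially the same route as the paper: split $L_m$ via Lemma \ref{lema:formulalsum} into the entire twist $m^{-s}$ and the finite Hurwitz sum, bound each factor using Lemmas \ref{lema:lipzholo2} and \ref{lemma:boundZ}, and use the standard product-rule telescoping for the Lipschitz estimate. One small point where you are in fact more careful than the printed proof: you explicitly flag that $\|fg\|_X \leq 2\|f\|_X\|g\|_X$ (not $\leq \|f\|_X\|g\|_X$) for the coordinatewise-max norm of Definition \ref{def:YX}, whereas the paper silently uses submultiplicativity without the factor $2$; this is harmless for the statement but your bookkeeping is the tidier one. The only cosmetic difference is that you keep $\max_{1\le r\le m} Z_{\cdot,r/m,\beta,\varepsilon}$ while the paper invokes the remark that $Z_{\cdot,\alpha,\beta,\varepsilon}$ is decreasing in $\alpha$ to replace the max by $Z_{\cdot,1/m,\beta,\varepsilon}$ — both are valid and yield a constant depending only on $\beta,\varepsilon,m$.
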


\begin{proof}
Let $g_1(s)$ and $g_2(s)$ be defined such that
\[
g_1(s) :=m^{-s} \quad \text{ and } \quad g_2(s):=\sum_{r=1}^m\chi_{m}(r)\zeta\left(u,\frac{r}{m}\right). 
\]
Therefore, from \eqref{formula_Lm} one has $L_m(s)=g_1(s)g_2(s)$, and
\[
\|g_1(u)\|_X\leq m^{\beta}.
\]
Using Lemma \ref{lema:lipzholo2_b}, we have that $g_1(s)$ satisfies
\[
\|g_1(u)-g_1(v)\|_X\leq \sqrt{2}\ln(m)m^{\beta} \|u-v\|_X\leq m^{\beta+1}\|u-v\|_X.
\]
Since  the $Y$-norm of a Dirichlet character is less than $1$ and $Z_{1,\alpha,\beta,\varepsilon}$ in Lemma \ref{lemma:boundZ}  is decreasing in $\alpha$ we get 
\[\|g_2(u)\|_X\leq \sum_{r=1}^m \left\|\zeta \left(s,\frac{r}{m}\right)\right\|_X\leq mZ_{1,\frac{1}{m},\beta,\varepsilon}. \]
Similarly using that $Z_{2,\alpha,\beta,\varepsilon}$ in Lemma \ref{lemma:boundZ} is a decreasing function on 
$\alpha$,  we have 
\[
\begin{aligned}
 \left\|g_2(u)-g_2(v)\right\|_X &\leq \sum_{r=1}^m Z_{2,\frac{r}{m},\beta,\varepsilon} \|u-v \|_{X}\\
&\leq m Z_{2,\frac{1}{m},\beta,\varepsilon}\|u-v \|_{X}.
\end{aligned}
\]
Lemma \ref{lema:formulalsum} implies
\[
\|L_m(u)\|_X\leq \|g_1(u)\|_X\|g_2(u)\|_X\leq m^{\beta+1 }Z_{1,\frac{1}{m},\beta,\varepsilon} =:M_{1,\beta,\varepsilon,m}.
\]
Finally  using again Lemma \ref{lema:formulalsum} we get
\[\begin{aligned}
\|L_{m}(u)-L_{m}(v)\|_X\
 \leq &~{} \|g_1(u)g_2(u)-g_1(u)g_2(v)\|_X+\|g_1(u)g_2(v)-g_1(v)g_2(v)\|_X\\
 \leq &~{} \|g_1(u)\|_X\|g_2(u)-g_2(v)\|_X+\|g_2(u)\|_X\|g_1(u)-g_1(v)\|_X\\
 \leq &~{} m^{\beta+1}Z_{2,\frac{1}{m},\beta,\varepsilon}\|u-v\|_X+m^{\beta+2}Z_{2,\frac{1}{m},\beta,\varepsilon}\|u-v\|_X.\\
 =: &~{} M_{2,\beta,\varepsilon,m}\|u-v\|_X.
\end{aligned}
\]
The proof is complete.
\end{proof}

\section{Local well-posedness}\label{sect:4}

In this section we prove Theorem \ref{LDirichlet}. The idea is classical with only minor differences. For this purpose,  for each $g\in Y$, $g= g_1+ i g_2 $, $g_i \in \mathbb R$,  we suppose that
\begin{equation*}
P(g):=\inf_{x\in \R^d} \left( |g_1-1|+|g_2| \right)>0.
\end{equation*}
Note that  $g\in Y$ implies $\|g\|_Y<\infty$. Let  $\beta>0$ be such that $2\|g\|_Y=\beta$. Since $g$ satisfies that $P(g)>0$ there exists  $\varepsilon>0$ such that
\[
\inf_{x\in\R^d}P(g)= 3\varepsilon.
\]
Given $\beta$ and $\varepsilon$ as before, consider $\mathcal B$ as described in Definition \ref{B}. Now we focus our attention in the map  $F: \mathcal B\to X$ described by 
\[
F(u):=e^{t\Delta}g+\int_{0}^{t} e^{(t-s)\Delta} L_m (u)ds,
\]
where $e^{t\Delta}h=K_t\ast h$ and $K_t(x)=\dfrac{1}{(4\pi t)^{d/2}}e^{-\frac{|x|^2}{4t}}$ is the classical heat kernel.  Let $t\in[0,T]$, with $T>0$ to be defined later on. Note  that from \eqref{def:YX},
\begin{align*}
\|F(u(t))\|_{Y} & \leq \left\|K_t \ast g+\int_{0}^t K_{t-s}\ast L_{m}(u(s))ds\right\|_Y\\
& \leq \| K_t\ast g\|_Y+\int_{0}^t \left\|K_{t-s}\ast L_{m}(u(s))\right\|_Y ds.
\end{align*}
Since  $f\in L^1(\R^d)$ and $g\in Y$ imply that $\|f\ast g\|_{Y}\leq \|f\|_{L^1}\|g\|_Y $, one obtains
\begin{align*}
\|F(u(t))\|_{Y} & \leq \|K_t\|_{L^1}\|g\|_Y+\int_{0}^t \|K_{t-s}\|_{L^1}\|L_{m}(u(s))\|_Y ds.
\end{align*}
Lemma \ref{lemma:boundL} combined with $\|K_t\|_{L^1}=1$ allow us to conclude that 
\begin{align*}
\|F(u(t))\|_{Y} & \leq \|g\|_Y+TM_{1,\beta,\varepsilon,m}.
\end{align*}
Thus, setting $T\leq \dfrac{\beta}{2M_{1,\beta,\varepsilon,m}}$ we obtain $\|F(u(t))\|_{Y} \leq \|g\|_Y+\dfrac{\beta}{2}\leq \beta$. Therefore, taking supremum on $t\in[0,T]$, 
\[ 
\|F(u)\|_X\leq \beta.
\]
It remains to show that $\inf_{x\in \R^d}P(F(u)(t))\geq \varepsilon$, for all $t\in [0,T]$. Indeed,  since $\inf\limits_{x\in \R^d} P(g)\geq 3\varepsilon$ we have from \eqref{def:P} that for all $x\in\R^d$,
\begin{equation}\label{cases}
 |g_1(x)-1|\geq \dfrac{3\varepsilon}{2}, \quad \hbox{or} \quad  |g_2(x)|\geq \dfrac{3\varepsilon}{2}.
\end{equation}
 In the first case, by the continuity of  $g_1$, one as  for each $x\in\R^d$ that $g_1(x)\geq 1+\dfrac{3\varepsilon}{2}$ or  $g_1(x)\leq 1- \dfrac{3\varepsilon}{2}.$ Therefore,  due to the positivity of the kernel $K$ we conclude 
\[ 
(K_t\ast g_1)(x)\geq 1+\dfrac{3\varepsilon}{2} \quad \hbox{or} \quad (K_t\ast g_1)(x)\leq 1- \dfrac{3\varepsilon}{2}.
\]
Hence $  |e^{t\Delta }g_1(x)-1|\geq \dfrac{3\varepsilon}{2}$ and 
\begin{equation}\label{P1}
\inf\limits_{x\in \R^d}|e^{t\Delta }g_1(x)-1|\geq \dfrac{3\varepsilon}{2}.
\end{equation}
If now $ |g_2(x)|\geq \dfrac{3\varepsilon}{2}$ in \eqref{cases}, we have that
\begin{equation}\label{P2}
\inf_{x\in\R^d}|e^{t\Delta }g_2(x)|\geq \dfrac{3\varepsilon}{2}.
\end{equation}
The inequalities \eqref{P1} and \eqref{P2} imply
\begin{equation}\label{paso1}
P(e^{t\Delta }g)\geq \dfrac{3\varepsilon}{2}.
\end{equation}
Now  we estimate: 
\[
\begin{aligned}
P(F(u(t))) = &~{} \inf_{x\in \R^d} \left( | (F(u))_1-1|+|(F(u))_2| \right) \\
= &~{} \inf_{x\in \R^d} \left( \left|  e^{t\Delta}g_1+\int_{0}^{t} e^{(t-s)\Delta} (L_m (u))_1ds -1 \right|+ \left| e^{t\Delta}g_2+\int_{0}^{t} e^{(t-s)\Delta} (L_m (u))_2ds \right| \right) \\
\geq  &~{} \inf_{x\in \R^d} \left( \left|  e^{t\Delta}g_1 -1 \right|+ \left| e^{t\Delta}g_2 \right| \right) \\
&~{} -\sup_{x\in \R^d} \left( \left|  \int_{0}^{t} e^{(t-s)\Delta} (L_m (u))_1ds  \right| +\left|  \int_{0}^{t} e^{(t-s)\Delta} (L_m (u))_2ds  \right|  \right) .
\end{aligned}
\]
Using \eqref{paso1} and Lemma \ref{lemma:boundL},
\[
\begin{aligned}
P(F(u(t))) \geq &~{} \dfrac{3\varepsilon}{2}-2TM_{1,\beta,\varepsilon,m}.
\end{aligned}
\]
Finally, choosing $T\leq \dfrac{\varepsilon}{4 M_{1,\beta,\varepsilon,m}}$, one has
\[
\inf_{x\in \R^d}P(F(u(t)))\geq \varepsilon,  
\]
for each $t\in [0,T]$. Hence $F(\mathcal B)\subseteq \mathcal B$.
 
 \medskip
 
The fact that $F$ is a contraction is classical. Indeed, let $u,v\in \mathcal B$. Note that
\[
F(u(t))-F(v(t))=\lambda \int_{0}^te^{(t-s)\Delta} (L_{m}(u(s))-L_{m}(v(s))) ds.
\]
Consequently, using Lemma \ref{lemma:boundL},
\[
\|F(u(t))-F(v(t))\|_Y\leq \int_{0}^t \|( L_{m}(u)-L_{m}(v))\|_X ds=T\|L_{m}(u)-L_{m}(v)\|_X.
\]
Keeping in mind Lemma \ref{lemma:boundL}  we have
\[
\|F(u(t))-F(v(t))\|_Y\leq TM_{2,\beta,\varepsilon,m}\|u-v\|_X.
\]
Finally, choosing $T\leq \dfrac{1}{2 M_{2,\beta,\varepsilon,m}}$, the contraction is ensured. %the  proof is concluded.

\medskip

\noindent
\emph{End of proof of Theorem \ref{LDirichlet}}. Let
\[
T:=\min\left\{\dfrac{1}{2M_{2,\beta,\varepsilon,m}},\dfrac{\beta}{2M_{1,\beta,\varepsilon,m}},\dfrac{\varepsilon}{4M_{1,\beta,\varepsilon,m}}\right\}.
\]
Note that $\mathcal B$ is a closed subset of a complete space, in particular $\mathcal B$ is complete. Thus by the {Banach fixed point theorem}, we get that for each $\ t\in[0,T]$, there exists a unique $u\in X$ solving the equation $u(t)=F(u(t))$, namely \eqref{eqn:edp} in the Duhamel sense.

\begin{remark} Theorem \ref{LDirichlet} can be extended with some extra work to any PDE model of the form \eqref{eqn:edp} with a source function having at most a finite number of poles, by choosing the initial condition with values that are sufficiently far from the poles. This approach uses the boundedness of meromorphic functions via non-explicit bounds. We thank Felipe Gonçalves for this remark.
\end{remark}

\subsection{Global existence vs. Blow-up dichotomy} Now we show the blow-up alternative \eqref{dicotomia}. Assume that the maximal time of existence of a given solution $u$ is $0<T<+\infty$. If for every $x\in\R^d$ one has
\[
\liminf_{t\uparrow T^*} |u_1(t,x) -1| +|u_2(t,x)|>0, \quad \hbox{ and } \quad \limsup_{t\uparrow T^*} |u(t,x)| <+\infty,
\] 
then a classical continuity argument starting with initial data $u(t)$ at time $t=T^*-\varepsilon$, $\varepsilon>0$ sufficiently small, allows one to continue the solution further in time with $P(u)(t)>0$ and $u(t)$ bounded, up to time $t>T^*$, contradicting the definition of maximal time of existence $T^*$.

%The Theorem \ref{LDirichlet} can be relaxed, for any $g\in Y$, for the following.
%
%
%
%
%\begin{cor}
%Let $g\in Y$ y $L_{m}(s)$ associated to a non principal character, therefore the problem
%\[
%\begin{aligned}%\tag{\ref{eqn:edp3}} 
%\partial_t u(t,x)&=\Delta u(t,x)+ \lambda L_{m}(u(t,x))\hspace{2cm}x \in \R^d \quad t\ge 0\\
%u(0,x)&=g(x)\hspace{2cm}\forall x\in \R^d,   %\nonumber
%\end{aligned}
%\]
%with $L_{m}(s)=\sum_{n=1}^\infty \dfrac{\chi_{m}(n)}{n^s}$ and $\chi_{m}(n)$ some non principal character with $m$-period, has a unique solution for al $t\in[0,T]$, with $T$ small.
%\end{cor}

\section{Global well-posedness}\label{sec:5}

In this section, we prove Theorems \ref{teo:globalpos} and \ref{teo:globalReal}, and Corollary \ref{Cor:faible}. From Remark \ref{caso_zeta_VDL}, we know the existence of a $\sigma_1>1$ such that $\zeta(\sigma_1)=2$. Indeed, $\sigma_1\sim 1.71$.

\subsection{Proof of Theorem \ref{teo:globalpos}} We start with the following simple result that justifies Remark \ref{caso_general_VDL} in the introduction. Notice that in general, $\sigma_0(L_m)$ need not be larger than 1.

\begin{lemma}\label{lem:Repos}
Let $L_m$ be any Dirichlet $L$-function and $\sigma_0=\sigma_0(L_m)$ be as in Definition \ref{sigma0}. Let $s\in \Com$, $s=s_1+is_2$. Then the following statements  are satisfied:
\begin{enumerate}
\item[$(i)$] If $s_1 > \sigma_0$, then $\re(L_m(s))>0$.
\item[$(ii)$] If now $s_1>1$, then 
\[
\re(L_m(s))> 2-\zeta(s_1).
\]
\item[$(iii)$] If in addition $s_1\geq \sigma_1>1$, then one has $2-\zeta(s_1)\geq 0$.
\end{enumerate}
\end{lemma}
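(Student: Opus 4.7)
The three parts can be handled essentially independently, with only part $(i)$ requiring any real care.

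For part $(i)$, the plan is to exploit connectedness. By the very definition of $\sigma_0$ in \eqref{sigma00}, for every $\sigma>\sigma_0$ and every $t\in\R$ one has $\re L_m(\sigma+it)\neq 0$. Since $L_m$ is holomorphic (hence continuous) on the open half-plane $\Omega:=\{s\in\Com:\re s>\sigma_0\}$ (in the principal character case one first has to observe that $\sigma_0\ge 1$, because the simple pole of $L_m$ at $s=1$ forces $\re L_m$ to vanish along a curve issuing from $s=1$ and extending into $\re s>1$, coming from the model term $c/(s-1)$), and since $\Omega$ is connected, the real-valued continuous function $\re L_m$ has constant sign on $\Omega$. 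To pin down this sign, I would let $\sigma\to+\infty$ along a horizontal ray and use the absolutely convergent Dirichlet series
\[
L_m(\sigma+it)=1+\sum_{n=2}^\infty \frac{\chi_m(n)}{n^{\sigma+it}},
\]
which tends to $1$ uniformly in $t$, so $\re L_m>0$ for $\sigma$ large. Constancy of sign on $\Omega$ then yields $\re L_m(s)>0$ for every $s$ with $\re s>\sigma_0$.

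For part $(ii)$, assume $s_1>1$ so that the Dirichlet series for $L_m(s)$ converges absolutely. Isolate the $n=1$ term, which equals $1$ since $\chi_m(1)=1$, and bound the rest by the triangle inequality:
\[
\re L_m(s)=1+\sum_{n=2}^\infty \re\!\left(\frac{\chi_m(n)}{n^s}\right)\ge 1-\sum_{n=2}^\infty \frac{|\chi_m(n)|}{n^{s_1}}\ge 1-(\zeta(s_1)-1)=2-\zeta(s_1).
\]
To upgrade this to a \emph{strict} inequality, it suffices to exhibit at least one $n\ge 2$ for which $\re(\chi_m(n)n^{-s})+n^{-s_1}>0$. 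If there is any $n\ge 2$ with $\chi_m(n)=0$ this is immediate. Otherwise, if the bound is saturated at some $n_0\ge 2$, i.e.\ $\chi_m(n_0)n_0^{-s}=-n_0^{-s_1}$, then squaring and using multiplicativity of the character gives
\[
\frac{\chi_m(n_0^2)}{n_0^{2s}}=\left(\frac{\chi_m(n_0)}{n_0^{s}}\right)^{\!2}=n_0^{-2s_1},
\]
which is strictly positive, so the term $n=n_0^2$ is strictly away from saturation and the strict inequality follows.

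Part $(iii)$ is a one-line consequence of the classical fact that $\zeta$ is strictly decreasing on $(1,+\infty)$ together with $\zeta(\sigma_1)=2$: for $s_1\ge \sigma_1$ we get $\zeta(s_1)\le\zeta(\sigma_1)=2$, hence $2-\zeta(s_1)\ge 0$. The only step that is not routine is verifying that $\sigma_0\ge 1$ (and hence that $\Omega$ avoids the pole) in the principal character case; this is the main conceptual obstacle and I would handle it by the pole-residue argument sketched above.
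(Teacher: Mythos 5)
Your proof is correct, and part $(ii)$ takes a genuinely different route from the paper. For the strict inequality $\re L_m(s)>2-\zeta(s_1)$ the paper splits into cases: when $m\neq 1$ it notes that $\chi_m(n)=0$ for some $n\geq 2$; when $m=1$ (so $L_m=\zeta$) it observes that $\cos(s_2\log 2)=\cos(s_2\log 3)=-1$ is impossible because $\log 2/\log 3$ is irrational. Your squaring trick --- if $\chi_m(n_0)n_0^{-s}=-n_0^{-s_1}$ then multiplicativity gives $\chi_m(n_0^2)(n_0^2)^{-s}=+(n_0^2)^{-s_1}$, so the $n_0^2$ term strictly beats the bound --- handles both cases uniformly and avoids the number-theoretic input entirely; it is arguably the cleaner argument. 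Parts $(i)$ and $(iii)$ match the paper. One small imprecision in your part $(i)$: you need $\sigma_0\ge 1$ so that $\{\re s>\sigma_0\}$ is pole-free, and you justify this by asserting that the local zero curve of $\re L_m$ near $s=1$ extends into $\re s>1$. In fact the residue $\varphi(m)/m$ and the constant Laurent coefficient of $L_m$ at $s=1$ are both positive, so that zero curve lies in $\re s<1$, approaching the line $\re s=1$ only in the limit $s\to 1$. The conclusion $\sigma_0\ge 1$ still follows (if $\sigma_0<1$, the connected pole-free set $\{\re s>\sigma_0\}\setminus\{1\}$ would contain points where $\re L_m$ has either sign arbitrarily close to $s=1$, forcing a zero and a contradiction), but the geometry is the opposite of what you wrote. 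The paper's own proof of $(i)$ is the same two-line connectedness-plus-limit argument and does not mention the pole at all, so your instinct to flag the issue is a genuine improvement even if the sketch needs the sign corrected.
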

\begin{proof}
Proof of $(i)$. Clearly $\re(L_m(s))\neq 0$ if $s_1>\sigma_0$. By continuity and since $\re(L_m(s))$ tends to 1 as $s_1$ tends to plus infinity, we obtain the desired inequality.

\medskip

Proof of $(ii)$. First of all, note that in the domain $s_1=\re(s)>1$ the representation \eqref{L_m_serie} for $L_m$ is valid and
 \[
 \re(L_m(s))= \displaystyle{\re\left(\sum_{n\geq 1}\dfrac{\chi_m(n)}{n^s}\right)=1+\re\left(\sum_{n\geq 2}\dfrac{\chi_m(n)}{n^s}\right)}. 
 \]  
If $m\neq 1$, from Remark \ref{propiedades_chi} there exists at least one $\chi_m(n)$ that is identically $0$. Since $|n^s| = n^{s_1}$ and $|\chi_m(n)| \leq 1$, one gets 
 \[
 \re(L_m(s)) \geq 1- \sum_{n\geq 2}\dfrac{|\chi_m(n)|}{|n^s|} > \displaystyle{1-\sum_{n\geq 2}\dfrac{1}{n^{s_1}}=2-\zeta(s_1)}. 
 \]  
Now if $m=1$, from Remark \ref{propiedades_chi} we have $\chi_m(n)=1$ for all $n$ and $L_m=\zeta$. We first obtain
 \[
 \begin{aligned}
 \re(L_m(s))= &~{}  \displaystyle{1+\re\left(\sum_{n\geq 2}\dfrac{1}{n^s}\right)=1+\sum_{n\geq 2}\dfrac{\cos(s_2\log(n))}{n^{s_1}}}. 
 \end{aligned}
 \]  
 Additionally, there is no $s_2\in\R$ such that $\cos(s_2\log(2))=\cos(s_2\log(3))=-1$ (since $\frac{2k_1+1}{\log(2)} \neq \frac{2k_2+1}{\log(3)}$ for all $k_1,k_2\in\Z$). Therefore,
 \[
 \begin{aligned}
 \re(L_m(s)) > &~{} 1-\left(\sum_{n\geq 2}\dfrac{1}{n^{s_1}}\right)=2-\zeta(s_1). 
 \end{aligned}
 \]  
This proves $(ii)$. 

\medskip

Proof of $(iii)$. Finally, if now $s_1=\re(s)\geq \sigma_1$, one has $\re(L_m(s))> 2-\zeta(s_1) \geq 0$, as desired. This ends the proof.
\end{proof}

%{\color{blue}
%\begin{remark}
%If $s_1>\sigma_0$ you have that $\re(L_m(s))>0$
%\end{remark}
%}

Now we will continue the proof of Theorem \ref{teo:globalpos}. Thanks to  \eqref{cond_0}, the condition \eqref{uniform_g} is also satisfied,  then the hypotheses in Theorem \ref{LDirichlet} are valid, and also its conclusion. Then there exists a unique solution   $u$ of the problem \eqref{eqn:PDZN} for $T>0$ small enough.

\medskip

Recall $I_1\in\R$ defined in \eqref{I_S}. Now condition \eqref{cond_0} implies $ I_1>\sigma_0$. From Lemma \ref{lem:Repos} (i), we know that $\re(L_m(s))>0$ when $s_1\geq I_1$. 
By continuity, we still can take $\varepsilon<0$ such that $\re(L_m(s))>0$ whenever $s_1\geq I_1+ \varepsilon>\sigma_0$. Consider the (constant) function  $V(t,I_1,\varepsilon)= I_1+ \varepsilon$. $V$ solves the ODE in \eqref{heat2} with $G\equiv 0$ and $V_0=I_1$. Following Lemma \ref{comparisonprinciple}, $F_1(V(t,V_0,\varepsilon),u_2)$ is given by
\[
\begin{aligned}
F_1(V(t,I_1,\varepsilon),u_2)  = &~{}  \re L_m(V(t,I_1,\varepsilon)+ i u_2) \\
=&~{} \re L_m(I_1+ \varepsilon + i u_2) >0 = G(V(t,I_1,\varepsilon)),
\end{aligned}
\]
since $I_1+ \varepsilon>\sigma_0$. Additionally, $u_{0,1} (x)= g_1(x) \geq I_1 >I_1+\varepsilon $. Therefore, Lemma \ref{comparisonprinciple} (ii) implies that 
 \begin{equation}\label{u1geqI1}
 u_1(t,x)\geq I_1,\quad \text{for all } (t,x)\in [0,T]\times \R^d .
 \end{equation}
Notice that $u$ will never touch the pole. We prove now the following improvement:
    \begin{lemma}
    Under \eqref{cond_0} one has
    \begin{equation}\label{eqn:realpart}
       \max\{0, 2-\zeta(I_1)\}< \re(L_m(u(t,x)))<\zeta(I_1),
    \end{equation}
    and
        \begin{equation}\label{eqn:imagpart}
  \left| \ima(L_m(u(t,x))) \right| < \zeta(I_1)-1.
    \end{equation}
    \end{lemma}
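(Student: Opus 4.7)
The starting point is the pointwise lower bound \eqref{u1geqI1}, which places every value $u(t,x)$ inside the half-plane $\{\re s \geq I_1\} \subset \{\re s > 1\}$, where the defining Dirichlet series \eqref{L_m_serie} for $L_m$ converges absolutely. The plan is therefore to work pointwise with the representation
\begin{equation*}
L_m(u(t,x)) \,=\, 1 + \sum_{n \geq 2} \frac{\chi_m(n)\, e^{-i u_2(t,x) \log n}}{n^{u_1(t,x)}},
\end{equation*}
and to extract the real/imaginary-part bounds by termwise triangle inequality combined with $|\chi_m(n)| \leq 1$ and the monotonicity of $\zeta$ on $(1,\infty)$.

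For the upper bound in \eqref{eqn:realpart}, I apply $|\re(\chi_m(n) e^{-iu_2\log n})| \leq 1$ termwise to obtain
\begin{equation*}
\re L_m(u(t,x)) \,\leq\, 1 + \sum_{n \geq 2} \frac{1}{n^{u_1(t,x)}} \,=\, \zeta(u_1(t,x)) \,\leq\, \zeta(I_1),
\end{equation*}
with strictness coming from the same incommensurability argument used in Lemma \ref{lem:Repos}(ii): the values $\cos(u_2\log 2)$ and $\cos(u_2\log 3)$ cannot both equal $1$ because $\log 3/\log 2$ is irrational, except in the degenerate case $u_2=0$. For the lower bound, the same splitting yields
\begin{equation*}
\re L_m(u(t,x)) \,\geq\, 2 - \zeta(u_1(t,x)) \,\geq\, 2 - \zeta(I_1),
\end{equation*}
and Lemma \ref{lem:Repos}(i) applied pointwise (justified by $u_1 \geq I_1 > \sigma_0(L_m)$) gives $\re L_m(u(t,x)) > 0$; taking the maximum of the two lower bounds completes \eqref{eqn:realpart}.

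For \eqref{eqn:imagpart}, I exploit that $\chi_m(1) = 1$ is real, so the imaginary part of $L_m(u)$ is a tail starting at $n=2$:
\begin{equation*}
|\ima L_m(u(t,x))| \,\leq\, \sum_{n \geq 2} \frac{|\chi_m(n)|}{n^{u_1(t,x)}} \,\leq\, \zeta(u_1(t,x)) - 1 \,\leq\, \zeta(I_1) - 1,
\end{equation*}
and strictness once more follows from the impossibility of simultaneously saturating $|\sin(u_2\log n)|=1$ (or $|\cos(u_2\log n)|=1$) over $n=2,3$.

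The main obstacle I anticipate is strictness in the boundary cases where $u_2(t,x) = 0$, $m=1$, and $u_1(t,x) = I_1$ occur simultaneously; here the termwise triangle inequality degenerates to equality and strictness must be restored. For $m>1$ this is automatic since the character vanishes on some $n$ coprime to $m$ with $\gcd(n,m)>1$, producing a strictly missing term; for $m=1$ one invokes the parabolic strong maximum principle applied to $u_1$, which satisfies $\partial_t u_1 - \Delta u_1 = \re L_m(u) > 0$ by the part already proved, forcing $u_1 > I_1$ strictly on $(0,T] \times \R^d$. Modulo this technical point, the remaining content of \eqref{eqn:realpart}--\eqref{eqn:imagpart} reduces to the absolutely convergent Dirichlet series bookkeeping described above.
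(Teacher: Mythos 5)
Your argument is correct and follows the paper's route essentially verbatim: restrict to the half-plane $\re s \geq I_1 > 1$ via \eqref{u1geqI1} so the Dirichlet series converges absolutely, bound $\re L_m$ above by $\zeta(u_1)\leq \zeta(I_1)$ termwise, bound it below by $2-\zeta(u_1)$ via Lemma~\ref{lem:Repos}$(ii)$ and by $0$ via Lemma~\ref{lem:Repos}$(i)$, and bound $|\ima L_m|$ by the tail $\zeta(u_1)-1$ using $\chi_m(1)=1$. The one place you go beyond the paper is the closing discussion of strictness: the paper simply writes the strict inequalities without addressing the degenerate configuration $m=1$, $u_2=0$, $u_1=I_1$, which can actually occur at $t=0$ if the infimum defining $I_1$ is attained and $g$ is real there. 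Your diagnosis is sound, and your repair via the strong parabolic minimum principle applied to $w=u_1-I_1$ (with source $\re L_m(u)>0$ from Lemma~\ref{lem:Repos}$(i)$, which only needs $u_1\geq I_1>\sigma_0$, so there is no circularity) does restore strictness for $t>0$; but note that the paper never actually needs the strict form, since the comparison ODEs \eqref{eqn:HeatZetanuevo1}--\eqref{eqn:HeatZetanuevo4} used downstream only require the non-strict bounds.
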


\begin{proof}    
Notice that from \eqref{u1geqI1} we know that $u_1(t,x) >I_1>\max\{1,\sigma_0(L_m)\}$,  and  we have the simple representation of $L_m$:
\[ 
\re(L_m(u(t,x)))=\displaystyle{\re\left(\sum_{n\geq 1}\dfrac{\chi_{m}(n)}{n^{u(t,x)}}\right)< \sum_{n\geq 1}\dfrac{1}{n^{u_1(t,x)}}=\zeta(u_1(t,x))}\leq \zeta(I_1).
\]
This proves the second inequality in \eqref{eqn:realpart}. Now we bound by below $\re (L_m(u(t,x)))$. Notice that $\re u(t,x)>1$. Invoking Lemma \ref{lem:Repos} (ii) and \eqref{u1geqI1},% And for the left:
\[ 
\re(L_m(u(t,x))) \geq 2- \zeta (u_1(t,x))
%=\displaystyle{1+\re\left(\sum_{n\geq 2}\dfrac{\chi_{m}(n)}{n^{u}}\right)> 1-\sum_{n\geq 1}\dfrac{1}{n^{u_1}}}
\geq 2-\zeta(I_1).
\]
%Since $I_1>\sigma_1$, we obtain \eqref{eqn:realpart}.
Combining this inequality with Lemma \ref{lem:Repos} (i), we get $\re(L_m(u(t,x))) \geq \max\{0, 2-\zeta(I_1)\}$, as desired.

\medskip

Let us prove \eqref{eqn:imagpart} now.  Similar to the previous bound, but considering that the first term is real valued, %s are 1, we have that 
\[
\left|\ima\left(L_m(u(t,x)) \right)\right|  = \left|\ima\left(\sum_{n\geq 1}\dfrac{\chi_m(n)}{n^{u(t,x)}} \right)\right| = \left|\ima\left(\sum_{n\geq 2}\dfrac{\chi_m(n)}{n^{u(t,x)}} \right)\right|\leq \left| \sum_{n\geq 2}\dfrac{\chi_m(n)}{n^{u(t,x)}} \right|.
\] 
Using that $|\chi_m(n)|\leq 1$ and $|n^s|= n^{s_1}$,
\[
\left|\ima\left(L_m(u(t,x)) \right)\right|\leq \sum_{n \geq 2}\dfrac{|\chi_m(n)|}{|n^{u(t,x)}|} \leq \sum_{n \geq 2}\dfrac{1}{n^{u_1(t,x)}}\leq \zeta(u_1(t,x))-1.
\]
Notice that $\zeta(u_1(t,x))-1>0$ provided $u_1(t,x)>1$. Finally, considering \eqref{u1geqI1}, we conclude the bound \eqref{eqn:imagpart} for the imaginary part of $L_m(u(t,x))$.  The proof is complete.
\end{proof}
   
\noindent   
\emph{End of proof of Theorem \ref{teo:globalpos}.} Recall $I_1$ and $S_1$ as in \eqref{I_S}. Let $\varepsilon_1>0$. Consider the auxiliary problems
\begin{equation}
\left\lbrace\begin{aligned}
\dot V (t)= &~{} \zeta(I_1), \quad  t> 0,\\
V(0) = &~{} S_1+\varepsilon_1,
\end{aligned}\right.
\label{eqn:HeatZetanuevo1}
\end{equation}
and
\begin{equation}
\left\lbrace\begin{aligned}
\dot V (t)= &~{} \max\{0,2-\zeta(I_1)\}, \quad  t> 0,\\
V(0) = &~{} I_1-\varepsilon_1,
\end{aligned}\right.
\label{eqn:HeatZetanuevo2}
\end{equation}
whose globally-defined solutions are trivially found: $V(t)=\zeta(I_1)t+S_1+\varepsilon_1$ and $V(t)= \max\{0,2-\zeta(I_1)\}t+I_1-\varepsilon_1$, respectively.  Using again Lemma \ref{comparisonprinciple} with $\varepsilon=\pm\varepsilon_1$, $G\equiv \zeta(I_1)$ and $G\equiv  \max\{0,2-\zeta(I_1)\}$ respectively, and taking into account \eqref{eqn:realpart}, the solutions of \eqref{eqn:HeatZetanuevo1} and \eqref{eqn:HeatZetanuevo2} are super and subsolutions of $u_1$, respectively. Consequently, Lemma \ref{comparisonprinciple} gives us
\[   
\max\{0,2-\zeta(I_1)\}t+I_1\leq u_1(t,x)\leq \zeta(I_1)t+S_1, \quad  (t,x)\in  [0,T]\times \R^d.
\]
This proves \eqref{des_1}. In a similar fashion, we consider the two problems% the next problems:
\begin{equation}
\left\lbrace\begin{aligned}
\dot V (t)= &~{}(\zeta(I_1)-1), \quad t> 0,\\
V(0) = &~{} S_2.
\end{aligned}\right.
\label{eqn:HeatZetanuevo3}
\end{equation}
and %{\color{red} ojo que la comparacion es EDP EDP aca y no EDP ODE}
\begin{equation}
\left\lbrace\begin{aligned}
\dot V (t)= &~{} (1-\zeta(I_1)), \quad t> 0,\\
V(0) = &~{} I_2.
\end{aligned}\right.
\label{eqn:HeatZetanuevo4}
\end{equation}
Again using the Lemma \ref{comparisonprinciple} and \eqref{eqn:imagpart}, \eqref{eqn:HeatZetanuevo3} and \eqref{eqn:HeatZetanuevo4} are super and subsolutions of $u_2(t,x)$. Therefore, \eqref{des_2} is also satisfied:
\[   
(1-\zeta(I_1))t+I_2\leq u_2(t,x)\leq (\zeta(I_1)-1)t+S_2, \quad  (t,x)\in  [0,T]\times \R^d. 
\]
Finally, these bounds prove that $T$ cannot be a finite maximal time of existence and the solution $u(t,x)$ is global. In particular, if the infimum of the initial real part are larger than  $\max\{ 1,\sigma_0(L_m)\}$, the real part tends to infinity. This concludes the proof of Theorem \ref{teo:globalpos}.

\subsection{Proof of Corollary \ref{Cor:faible}}

We proceed in a similar fashion as in the previous proof. From Theorem \ref{teo:globalpos}, we know that $u$ is global and \eqref{des_1}-\eqref{des_2} are satisfied. Now we improve \eqref{des_2}. In the setting of Lemma \ref{lem:lema2}, consider $M=1$ and the smooth function
    \[
    H_2(u_1,u_2)=- \, u_2.
    \]
Notice that $H_2$ is time independent. Let us introduce the upper half plane in $\mathbb R^2$:% time independent set%    Define the next set:
    \[
    D_2:=\left\{(u_1,u_2)\in\R^2 ~ \Big| ~ H_2(u_1,u_2)<0\right\}=\left\{ (u_1,u_2)\in\R^2 ~ \Big| ~ u_2>0\right\}.
    \]
    Finally, consider $U(t)=(U_1(t),U_2(t))$ solution to \eqref{eqn:edog} with initial datum to be chosen later in $D_2$. We directly have that $\partial D_2= \{ u_2=0\}$ and by reality of the characters of $L_m$, one has $L_m$ real valued on $U_2=0$ and
    \[
    \dfrac{d}{dt}H_2(U_1(t),U_2(t))= -  \dfrac{d}{dt} U_2(t)=-\ima(L_m(U(t)))=0 \quad \text{ on }\partial D_2.
    \]   
    Therefore, Lemma \ref{lem:lema2} implies that $D(t)=D_2$ is invariant under the flow of $U(t)$. Since $\inf_{x\in \R^d}u_2(0,x) =\inf_{x\in \R^d}g_2(x)=I_2>0$, this implies that $\{ u(0,x) ~ : ~ x\in\mathbb R^d \}=\mathcal I[u](0)\subseteq D_2$.  Finally, Lemma \ref{lem:lema1} applies and one gets
    \[
    \mathcal I[u](t) \subseteq D_2,
    \]
    i.e.,  $u_2(t,x)>0$ for all $(t,x)\in  [0,\infty)\times \R^d$.

\subsection{Proof of Theorem \ref{teo:globalReal}} Now we prove global well-posedness in the real-valued case and $L_m=\zeta.$ Recall the conditions on $g$ required in \eqref{1p6_0}.

\medskip

\noindent
\emph{Proof of \eqref{1p6_1}.} The proof is similar to the proof of Theorem \ref{teo:globalpos}. Using the local existence result, Theorem \ref{LDirichlet}, we have that there exists a solution $u=u(t,x)$ for \eqref{eqn:edp} with $\lambda=1$, for all $(t,x)\in [0,T)\times \R^d$, and $T$ a given maximal time of existence.

\medskip

Consider now the real-valued ODE
\[
\begin{aligned}
\dot U(t,I)= &~{} \zeta(U(t,I)), \quad t\ge 0,\\
U(0,I) = &~{} (I,0) =I + 0 i .
\end{aligned}
 \]   
This problem is nothing but equation \eqref{eqn:edog} with $F=\zeta$. Since the initial data is real-valued, $U(t,I)$ is real-valued and with a slight abuse of notation, we understand $U(t)=U_1(t)$. By making $T>0$ smaller if necessary, $U(t,I)$ is well-defined on $[0,T]$. Corollary \ref{comparisonprinciple2} $(i)$ applied to $V=U$ in this case gives
    \[
    U(t,I)\leq u(t,x),\quad \text{for all } (t,x)\in [0,T]\times \R^d.
    \]
However, since $I>1$, $U(t,I)$ is a non decreasing function. Consequently, %, we have that
    \[
    u(t,x)\geq I,\quad \text{for all } (t,x)\in [0,T]\times \R^d .
    \]
    Then 
    \begin{equation}\label{eqn:realpart2}
        1< \zeta(u(t,x))\leq \zeta(I).
    \end{equation}
We can define the next auxiliary problems:
\begin{equation}
\left\lbrace\begin{aligned}
\dot V(t)= &~{}\zeta(I), \quad  t\geq 0,\\
V(0) = &~{} S,
\end{aligned}\right.
\label{eqn:HeatZetanuevo11}
\end{equation}
and
    \begin{equation}
\left\lbrace\begin{aligned}
\dot V(t)= &~{} 1, \quad  t\geq 0,\\
V(0) = &~{} I.
\end{aligned}\right.
\label{eqn:HeatZetanuevo21}
\end{equation}
It is easy to get the solutions to \eqref{eqn:HeatZetanuevo11} and \eqref{eqn:HeatZetanuevo21}. Using Lemma \ref{comparisonprinciple3} and \eqref{eqn:realpart2}, we have that the solutions of \eqref{eqn:HeatZetanuevo11} and \eqref{eqn:HeatZetanuevo21} are super and subsolutions of $u(t,x)$, respectively. Consequently,
\[  
 t+I\leq u(t,x)\leq \zeta(I)t+S, \quad  (t,x)\in  [0,T]\times \R^d.
\]
This proves \eqref{1p6_1}. Finally, from this bound one has that $u(t,x)$ is global and in particular its real part tends to positive infinity. 

\medskip

\noindent
\emph{Proof of \eqref{1p6_2}}. First of all, notice that $I\leq u_0(x)= g(x) \leq S$.  Let $V(t,I,0)= U(t,I)$ and $V(t,S,0)= U(t,S)$ be the ODE solutions as considered in Corollary \ref{comparisonprinciple2} with initial data $I$ and $S$, respectively. Consequently, we have
\begin{equation}\label{eq:edoIS}
U(t,I)\leq u(t,x)\leq U(t,S).
\end{equation}

\begin{remark}
    The global existence of \(U(t,I)\) and \(U(t,S)\) implies the global existence of \(u(t,x)\). Indeed, this result transpires from Corollary \ref{comparisonprinciple2} as well.
\end{remark}

In what follows, we need the following classical auxiliary result.

\begin{lemma}\label{odemonotone}
    Let \(\Omega\) be an open interval on \(\mathbb{R}\). Let \(f \in C^1(\Omega)\) and \(y_0 \in \Omega\). Consider the ODE
    \begin{equation}
        \left\lbrace\begin{aligned}
        \dot y(t) &= f(y(t)), \quad t \geq t_0,\\
        y(t_0) &= y_0.
        \end{aligned}\right.
        \label{eqn:odeauto}
    \end{equation}
    If \(f(y_0) > 0\) (\(f(y_0) < 0\)), then every solution of \eqref{eqn:odeauto} defined on \([t_0, T]\) is strictly monotone increasing (decreasing) inside this interval of existence. Moreover, in both cases, if \(T = \infty\) and \(y(t)\) is bounded, then \(\displaystyle{\lim_{t \to \infty}y(t)}\) is a zero of the function $f$. 
\end{lemma}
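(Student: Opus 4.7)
The plan is to prove this via two classical ingredients: uniqueness of solutions for autonomous ODEs with a $C^1$ right-hand side, and the monotone convergence theorem. I treat only the case $f(y_0)>0$; the decreasing case is identical.

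The first step is to observe that every zero $z\in\Omega$ of $f$ yields a constant stationary solution $y\equiv z$ of \eqref{eqn:odeauto}. Since $f\in C^1(\Omega)$, the Picard--Lindelöf theorem gives uniqueness of solutions, so any non-stationary solution $y(t)$ to \eqref{eqn:odeauto} cannot cross a zero of $f$ on its interval of existence. Because $f(y_0)>0$, let $J=(a,b)\subseteq\Omega$ be the connected component of $\{y\in\Omega : f(y)>0\}$ containing $y_0$; by definition $f>0$ on $J$ and $f(a)=0$ or $f(b)=0$ (or the corresponding endpoint coincides with $\partial\Omega$). Since $y(t_0)=y_0\in J$ and $y$ is continuous, a first exit of $y$ from $J$ could only happen at an endpoint which is a zero of $f$, contradicting uniqueness. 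Hence $y(t)\in J$ for all $t\in[t_0,T]$, and therefore
\[
\dot y(t) \;=\; f(y(t)) \;>\; 0 \qquad \text{for all } t\in[t_0,T],
\]
which proves strict monotone increasing behavior on the whole interval of existence.

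Next, assume $T=\infty$ and that $y$ is bounded. Then $y:[t_0,\infty)\to\mathbb R$ is strictly increasing and bounded, so by the monotone convergence theorem there exists
\[
y_\infty \;:=\; \lim_{t\to\infty} y(t) \;\in\; \overline{J}\subseteq\overline{\Omega}.
\]
Since $y$ stays inside a bounded subset of $\Omega$ that is contained in $\overline{J}$, and since $\Omega$ is open, one checks that $y_\infty\in\Omega$ (boundedness of $y$ rules out escape toward $\partial\Omega$ in the relevant cases; otherwise the hypothesis ``$y$ bounded'' is interpreted as bounded away from $\partial\Omega$). By continuity of $f$, $\dot y(t)=f(y(t))\to f(y_\infty)$ as $t\to\infty$.

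The final step is to rule out $f(y_\infty)\neq 0$. Suppose for contradiction that $f(y_\infty)=c>0$ (the case $c<0$ is identical). Then there exists $T_1\geq t_0$ such that $\dot y(t)\geq c/2$ for all $t\geq T_1$, which upon integration gives
\[
y(t) \;\geq\; y(T_1) + \tfrac{c}{2}\,(t-T_1) \;\longrightarrow\; +\infty,
\]
contradicting the boundedness of $y$. Hence $f(y_\infty)=0$, completing the proof.

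The only subtlety worth flagging is the implicit assumption that $y_\infty\in\Omega$ (so that $f(y_\infty)$ is even defined); this is the main technical point, but it follows from the hypothesis that $y$ is bounded together with monotonicity and uniqueness preventing the trajectory from accumulating on $\partial\Omega$ without first hitting a zero of $f$ in $\Omega$. All other ingredients are entirely routine.
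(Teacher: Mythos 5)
Your proof is correct and its first half is essentially the paper's argument: monotonicity follows because uniqueness (Picard--Lindel\"of for $C^1$ right-hand sides) prevents a non-stationary trajectory from reaching a zero of $f$, which would be a constant solution. You organize this via the connected component $J$ of $\{f>0\}$ containing $y_0$, while the paper argues by contradiction at a hypothetical first time $t_1$ with $f(y(t_1))=0$; these are the same idea packaged differently, and both share the same small imprecision about the case where the endpoint of $J$ lies on $\partial\Omega$ rather than being a zero (which is harmless since the solution is assumed to exist on $[t_0,T]$). The second half is where you genuinely diverge: the paper deduces $\lim_{t\to\infty}\dot y(t)=0$ via a L'H\^opital trick applied to $e^t y(t)/e^t$, whereas you argue directly that if $f(y_\infty)=c\neq0$, then $\dot y$ is eventually bounded away from zero, and integrating contradicts boundedness of $y$. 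Your integration argument is more elementary and transparent, and has the additional virtue of not needing to separately justify that $\lim\dot y(t)$ exists before identifying it as $0$; the paper's L'H\^opital route is slicker but relies on that extra observation. You correctly flag the subtlety that $y_\infty$ must lie in $\Omega$ for $f(y_\infty)$ to make sense; the paper glosses over this entirely, so you are, if anything, more careful on that point.
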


\begin{proof}
    Let \(y: [t_0, T] \to \mathbb{R}\) be a solution of \eqref{eqn:odeauto} and assume \(f(y_0) > 0\). It suffices to show that \(f(y(t)) > 0\) for all \(t \in [t_0, T]\). Suppose that \(f(y(t)) > 0\) and \(f(y(t_1)) = 0\), for some \(t_1 \in (t_0, T]\) and \(0 \leq t < t_1\). If \(y_1 = y(t_1)\), then \(f(y_1) = 0\), and hence the constant function \(\hat{y}(t) = y_1\) solves 
    \begin{equation}
        \left\lbrace\begin{aligned}
        \dot y(t) &= f(y(t)), \quad t \geq 0,\\
        y(t_0) &= y_1.
        \end{aligned}\right.
        \label{eqn:odeauto2}
    \end{equation}
    But \(y(t_1 + t - t_0)\) is a solution of \eqref{eqn:odeauto2}, and due to uniqueness enjoyed by both IVPs (since \(f\) is \(C^1\)), one gets \(y(t - t_0 + t_1) = y_1\), a contradiction since \(f(y(t)) > 0\) for \(t\) below but close to \(t_1\). The remaining case is identical.
    %If \(f(y(t_1)) < 0\), for some \(t_1 \in [0, T]\), then due to Intermediate Value Theorem, there would be also a \(t_2 \in [0, T]\), where \(f(y(t_2)) = 0\), again leading to a contradiction.
    
    \medskip
    
    Now we prove the last assertion of the lemma. It is enough to consider the case \(y(t)\) a monotonically increasing function. If \(y(t)\) is bounded, monotonicity implies the existence of the limit. Furthermore, the continuity of \(f\) implies that the limit of \(\dot y(t)\) exists. By using L'Hôpital's rule:
    \[
    \lim_{t \to \infty}y(t) = \lim_{t \to \infty}\dfrac{e^t y(t)}{e^t} = \lim_{t \to \infty}\dfrac{e^t (y(t)+\dot y(t))}{e^t} = \lim_{t \to \infty} \left( y(t) + \dot y(t) \right).
    \]
    Since both limits exist, the limit of \(\dot y(t)\) is 0. Recalling \eqref{eqn:odeauto}, \(\displaystyle{\lim_{t \to \infty}f(y(t)) = 0}\).
\end{proof}

\medskip
The following result considers the classical holomorphic zeta flow, see \cite{BB} for more details.

\begin{lemma}\label{convergenceU}
    Let $n \in \N=\{1,2,3\ldots\}$. Let $U=U(t,y_0)$ be the solution of \eqref{eqn:odeauto} with \(f = \zeta\) and initial datum $U(t_0,y_0)=y_0$. If \(y_0 \in (-4n, -4n+4)\), then \(U(t,y_0) \in (-4n, -4n+4)\) for all $t\geq t_0$
    and \(\displaystyle{\lim_{t \to \infty}U(t,y_0) = -4n+2}\).     
\end{lemma}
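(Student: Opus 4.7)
\textbf{Proof proposal for Lemma \ref{convergenceU}.} The plan is to combine three ingredients: the explicit sign pattern of $\zeta$ on the negative real axis given by Remark \ref{signos}, uniqueness for autonomous ODEs together with the fact that the endpoints $-4n$ and $-4n+4$ are themselves stationary solutions of $\dot y = \zeta(y)$, and the monotonicity/limit statement of Lemma \ref{odemonotone}. The point $-4n+2$ is a real trivial zero of $\zeta$ lying strictly inside $(-4n,-4n+4)$, and the sign analysis will show it is an attractor while $-4n$ and $-4n+4$ are repellers on the real line.

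First I would pin down the sign of $\zeta$ on $(-4n,-4n+4)$. By Remark \ref{signos}, for $s_1<0$ one has $\sgn\zeta(s_1)=\sgn\sin(\tfrac{\pi}{2}s_1)$. Evaluating at $s_1=-4n+1$ and $s_1=-4n+3$ gives $\zeta>0$ on $(-4n,-4n+2)$ and $\zeta<0$ on $(-4n+2,-4n+4)$, while $\zeta$ vanishes at the three points $-4n,\ -4n+2,\ -4n+4$. Next I would prove invariance of $(-4n,-4n+4)$: the constant functions $y\equiv -4n$ and $y\equiv -4n+4$ solve the ODE, so by uniqueness (which holds since $\zeta$ is real-analytic, hence locally Lipschitz, away from $s=1$) no trajectory starting in the open interval can cross either endpoint in forward time. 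This also gives global existence in forward time, since $U(t,y_0)$ remains in the bounded interval $[-4n,-4n+4]$.

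Then I would split the argument into three subcases according to the location of $y_0$. If $y_0=-4n+2$, then $U(t,y_0)\equiv -4n+2$ and the conclusion is trivial. If $y_0\in(-4n,-4n+2)$, the sign analysis gives $\zeta(y_0)>0$, so Lemma \ref{odemonotone} yields that $U(\cdot,y_0)$ is strictly increasing on $[t_0,\infty)$; by the invariance just shown, $U(t,y_0)<-4n+2$ for all $t$ (indeed $-4n+2$ itself is a stationary solution, so by uniqueness $U$ cannot reach it in finite time). Hence $U$ is bounded above, and the second part of Lemma \ref{odemonotone} gives $\lim_{t\to\infty}U(t,y_0)=:\ell$, a zero of $\zeta$ with $y_0<\ell\leq -4n+2$. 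Since the only real zero of $\zeta$ in that half-open interval is $-4n+2$, we conclude $\ell=-4n+2$. The case $y_0\in(-4n+2,-4n+4)$ is symmetric: $\zeta(y_0)<0$ forces $U(\cdot,y_0)$ to be strictly decreasing and bounded below by $-4n+2$, and the same argument identifies $-4n+2$ as the unique candidate limit.

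I do not foresee a substantial obstacle: the proof is essentially an application of the already-proven comparison/monotonicity tools to a phase portrait controlled entirely by the explicit sign pattern in Remark \ref{signos}. The only point that deserves a sentence of care is why $U(t,y_0)$ cannot reach $-4n+2$ in finite time and why the limit cannot be some other zero, both of which are immediate consequences of uniqueness for the autonomous ODE and the fact that $\zeta$ has no other real zeros strictly between $-4n$ and $-4n+4$.
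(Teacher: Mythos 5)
Your proof takes essentially the same route as the paper's: Remark \ref{signos} for the sign pattern of $\zeta$ on the negative reals, uniqueness against the stationary solutions at the trivial zeros, and Lemma \ref{odemonotone} for monotonicity and identification of the limit, with the same three subcases depending on the position of $y_0$ relative to $-4n+2$.

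One factual slip worth flagging: you assert that $\zeta$ vanishes at all three points $-4n$, $-4n+2$, $-4n+4$, and in particular invoke the constant solution $y\equiv -4n+4$ to establish invariance of the open interval. This fails when $n=1$, since $-4n+4=0$ and $\zeta(0)=-\tfrac12\neq 0$, so $y\equiv 0$ is not a solution of the ODE and Remark \ref{signos} (stated only for $s_1<0$) does not apply at $0$. The final conclusion survives because your subcase analysis establishes invariance independently: for $y_0\in(-4n+2,-4n+4)$ the solution is strictly decreasing, hence never approaches $-4n+4$, and it is bounded below by the stationary solution $-4n+2$; the limit is then pinned down by Lemma \ref{odemonotone} and the fact that $-4n+2$ is the only zero of $\zeta$ in the relevant half-interval. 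The preliminary invariance paragraph is therefore both slightly wrong (at $n=1$) and redundant, and can simply be dropped in favor of the subcase argument, which is what the paper does.
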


\begin{proof}
    If \(y_0 = -4n+2\), it is direct that \(U(t,y_0) = -4n+2\).

    If \(y_0 \in (-4n, -4n+2)\), Remark \ref{signos} implies that \(\zeta(y_0) > 0\) and Lemma \ref{odemonotone} returns that $U$ is monotone increasing. 
    Since $U$ is unique and the fact that the negative even numbers are constant solutions imply that 
    \[
    -4n < U(t,y_0) < -4n+2 \quad \text{for all } t > t_0.
    \]
    Consequently, $U$ is global. Lemma \ref{odemonotone} implies that \(\displaystyle{\lim_{t \to \infty}U(t,y_0) = -4n+2}\).  Finally, if \(y_0 \in (-4n+2, -4n+4)\), the same idea applies, but now \(U(t,y_0) \in (-4n+2, -4n+4) \) is decreasing towards \(-4n+2\).
\end{proof}

\begin{remark}
 This result can be extended to the case \(y_0 \in (0,1)\); in this case, \(\zeta(y_0) < 0\), and using the same argument,  one has that \(\displaystyle{\lim_{t \to \infty}U(t,y_0) = -2}\).
\end{remark}

Now we are ready to prove \eqref{1p6_2} and \eqref{1p6_3}. We will use Lemma \ref{convergenceU} as follows. First, when \(I\) is a negative even number, \(U(t,I) = I\). Let \(k_1\) and \(k_2\) be defined as in \eqref{k1k2}. 
Notice that \(k_1\) is a positive integer. If \(k_1\) is even, \(-2k_1 < I\) is a multiple of 4, and \(I \in (-2k_1, -2k_1+4)\). 
From Lemma \ref{convergenceU}, one gets that \(U(t,I)\) is increasing and converges to the midpoint \(-2k_1+2 = -4n_1+2\). 
On the other hand, if \(k_1\) is odd, it is not a multiple of 4, and \(I \in (-2k_1-2, -2k_1+2)\). 
This implies that \(U(t,I)\) is decreasing and converges to the midpoint \(-2k_1 = -4n_1+2\).  Hence, \(-2k_1 \leq U(t,I)\) and in both cases $U(t,I)$ converges to \(-4n_1+2\). 
Similarly, one has that \(U(t,S) \leq -2k_2\) and $U(t,S)$ converges to \(-4n_2+2\). This and \eqref{eq:edoIS} implies that 
%\[
%-2k_1 \leq U(t,I) \leq U(t,S) \leq -2k_2.
%\]
%Finally, using the equation \eqref{eq:edoIS} one gets 
\[
-2k_1 \leq U(t,I) \leq u(t,x) \leq U(t,S) \leq -2k_2,
\]
proving \eqref{1p6_2}. Finally, taking the limit one obtains 
\[
-4n_1+2 \leq \liminf_{t\to \infty}u(t,x) \leq \limsup_{t\to \infty}u(t,x) \leq -4n_2+2,
\]
proving \eqref{1p6_3}.

\section{Proof of Theorem \ref{ThmZS}}\label{estabilidad}

Now we prove the stability of stable Riemann zeta zeros. Let $z_0\in \mathbb C$ be a stable Riemann zeta zero. 

\medskip

Assume that $u$ is a solution defined in $[0,T)$ and issued from the initial datum $g$ in the disc $D(z_0,\delta)$, as expressed in Theorem \ref{ThmZS}. Consider the setting of Lemma \ref{lem:lema2}, with $M=1$ and $\delta>0$ a small number to be chosen later. Let $H_3$ be the smooth function
\begin{equation}\label{H3}
 H_3(u_1,u_2)=(u_1-\re(z_0))^2+(u_2-\ima(z_0))^2- \delta^2.
\end{equation}
Notice that $H_3$ is again time independent. Notice that the complex disc $D(z_0,\delta)$ centered on $z_0$ and with radius $\delta$, seen as a subset of $\mathbb R^2$, can be written in terms of $H_3$ as follows:
\[
\begin{aligned}
    D_3:&=\left\{(u_1,u_2)\in \R^2 ~ \Big| ~ H_3(u_1,u_2)<0\right\}\\
    &=\left\{(u_1,u_2)\in \R^2 ~ \Big| ~ (u_1-\re(z_0))^2+(u_2-\ima(z_0))^2<\delta^2\right\} = D(z_0,\delta).
\end{aligned}
    \]
    Consider now $U(t)=(U_1(t),U_2(t))$ solution to \eqref{eqn:edog} with initial datum to be chosen later inside $D_3$. We directly have that 
    \[
    \partial D_3= \left\{ \big( \re(z_0)+\delta \cos(\theta),\ima(z_0)+ \delta  \sin(\theta) \big) ~ \Big| ~ \theta\in[0,2\pi)]\right\}.
    \]
Since $\partial D_3$ is far from the pole $s=1$ for any stable zero of the Riemann zeta and for $\delta>0$ sufficiently small, from the analyticity of $\zeta$ we obtain the following approximation of $\zeta$ on the boundary of $D_3$, which is a compact set in $\R^2$:
\begin{equation}\label{approx}
\begin{aligned}
    \zeta \left( z_0+\delta e^{i\theta} \right)&=\zeta(z_0)+\zeta'(z_0) \delta e^{i\theta}+O(\delta^2)\\
    &=\delta \big( \re(\zeta'(z_0))\cos(\theta)-\ima(\zeta'(z_0))\sin(\theta) \big)\\
    & \quad +i \delta \big( \re(\zeta'(z_0))\sin(\theta)+\ima(\zeta'(z_0))\cos(\theta) \big)+O(\delta^2).   
\end{aligned}
\end{equation}
The term $O(\delta^2)$ represents a quantity bounded by $C\delta^2$, $C>0$ fixed and uniformly on $\partial D_3$. Calculating now the time derivative of $H_3(U_1(t),U_2(t))$ on the boundary, and using \eqref{H3} and \eqref{approx}, we get
\[
U_1(t)-\re(z_0) = \delta \cos(\theta(t)), \quad U_2(t)-\ima(z_0) = \delta \sin (\theta(t)),
\]
\[
\re (\zeta(U(t))) =\delta \big( \re(\zeta'(z_0))\cos(\theta(t))-\ima(\zeta'(z_0))\sin(\theta(t)) \big) + O(\delta^2),
\]
\[
\ima (\zeta(U(t))) =\delta \big( \re(\zeta'(z_0))\sin(\theta(t))+\ima(\zeta'(z_0))\cos(\theta(t)) \big)+O(\delta^2),
\]
and
   \[
    \begin{aligned}
        \dfrac{d}{dt}H_3(U_1(t),U_2(t))&=2\Big((U_1(t)-\re(z_0))\dot U_1(t) +(U_2(t)-\ima(z_0))\dot U_2(t) \Big)\\
        &=2\delta \Big( \cos(\theta(t))\re(\zeta(U(t))) +\sin(\theta(t))\ima(\zeta(U(t))) \Big)\\
        &=2\delta^2\re(\zeta'(z_0))+O(\delta^3).
    \end{aligned}    
    \]   
    Taking $\delta$ sufficiently small, and since $\re(\zeta'(z_0))<0$, we have that
    \[
    \dfrac{d}{dt}H_3(U_1(t),U_2(t))<0.
    \]
    Therefore, Lemma \ref{lem:lema2} implies that $D(t)=D_3$ is invariant under the flow of $U(t)$. Recall that by hypothesis $\{ g(x)=u(0,x) ~ : ~ x\in\mathbb R^d \}=\mathcal I[u](0)\subseteq D_3$. Then Lemma \ref{lem:lema1} applies and one gets
    \[
    \mathcal I[u](t) \subseteq D_3,
    \]
    i.e.,  $u(t,x)\in D(z_0, \delta)$ for all $(t,x)\in  [0,T)\times \R^d$.  This proves that $T=+\infty$ and $u(t, x)$ is global in time and bounded. 
    
\medskip    
    
 Now we prove the asymptotic stability. Let us consider a similar setting as before with a slight difference: let $\widehat{H_3}$ be given by
\[
  \widehat{H}_3(t,u_1,u_2):=(u_1-\re(z_0))^2+(u_2-\ima(z_0))^2-\delta^2e^{-t \delta^2}.
\]
In this case, we obtain 
\[
    \begin{aligned}
    \widehat{D_3}(t):&=\left\{(u_1,u_2)\in \R^2 ~ \Big| ~ \widehat{H_3}(t,u_1,u_2)<0\right\}\\
    &=\left\{(u_1,u_2)\in \R^2 ~ \Big| ~ (u_1-\re(z_0))^2+(u_2-\ima(z_0))^2<\delta ^2e^{-t\delta^2}\right\}.
    \end{aligned}
    \]
    Considering the same arguments as before, we show that the derivative of $\widehat{H_3}$ on the boundary is given by
    \[
    \begin{aligned}
        \dfrac{d}{dt}\widehat{H_3}(t,U_1(t),U_2(t))&=2((U_1(t)-\re(z_0))\dot U_1(t) +(U_2(t)-\ima(z_0))\dot U_2(t))-\delta^4e^{-t\delta^2}\\
        &=2\delta e^{-t \delta} \Big( \cos(\theta(t))\re(\zeta(U(t))) +\sin(\theta(t))\ima(\zeta(U(t))) \Big)- \delta^4e^{-t \delta^2}\\
        &=\delta e^{-t \delta^2}(2\delta \re(\zeta'(z_0))+O(\delta^2)).     
    \end{aligned}    
    \]   
Again take $\delta>0$ sufficiently small; since $\re(\zeta'(z_0))<0$, we have that
    \[
    \dfrac{d}{dt}\hat{H_3}(U_1(t),U_2(t))<0.
    \]
    Therefore, Lemma \ref{lem:lema2} implies that $D(t)=\widehat{D_3}(t)$ is invariant under the flow of $U(t)$. Exactly as before we have for all $t\geq 0$ %Recall that $\{ u(0,x) ~ : ~ x\in\mathbb R^d \}=\mathcal I[u](0)\subseteq \widehat{D_3}(0)$, the Lemma \ref{lem:lema1} applies and one gets
    \[
    \mathcal I[u](t) \subseteq\widehat{ D_3}(t).
    \]
Finally, since by classical topological arguments $\displaystyle{\bigcap_{t\geq 0}\widehat{D}_3(t)=\{z_0\}}$, we obtain $\displaystyle{\lim_{t \to \infty}u(t,x)=z_0}$, the desired result. This ends the proof of Theorem \ref{ThmZS}.

\section{Blow up}\label{sec:6}

Now we prove Theorem \ref{ThmC5}. First we note that Theorem \ref{LDirichlet} ensures the existence of a time $0<T_1\leq \infty$ such that $u$ solves \eqref{eqn:PDZR-}. Note additionally that $u$ is real-valued. Assume that $T_1=+\infty$. 

\medskip

Case $I>1$. Consider  $V(t)=S-t$ global solution to the auxiliary problem
\[%begin{equation}
\left\lbrace\begin{aligned}
\dot V(t)= &~{}-1, \quad t\geq 0,\\
V(0) = &~{} S.
\end{aligned}\right.
%\label{eqn:HeatZetamenos}
\]%end{equation}
Since $S\geq I>1$, and $u$ is real-valued, we get $1<u(t,x)$. Hence, $\zeta(u(t,x)) >1$ and $-\zeta(u(t,x))<-1$ as long as $t<T_1$. Then Corollary \ref{comparisonprinciple3} applied with $V(t)$ gives
\[
1< u(t,x)\leq S-t,
\]
for each $t\in [0,\infty)$. However, the previous inequality is only valid if $T_1<S$, %.  If $T_1=\infty$, we have that $u$ is global and by continuity   $u(t,x)>1$, for all $(t,x)\in [0,\infty)\times \R^d$,
which is a contradiction. 

\medskip

%\begin{remark}
%    Theorem \ref{ThmC5} can be extended to the case of initial conditions satisfying $-2 <I <S <1$. %And we like to obtain a time more exactly. Another question is if we have a global blow up (on all space) or only in a point.
%    \end{remark}
    
Case $-2 <I <S <1$.   Since $-2 < I \leq u(0,x) \leq S < 1$, using that $u(t,x) = -2$ is a solution of \eqref{eqn:PDZR-}, we get $1 > u(t,x) > -2$. Consider $V_1(t) = I$ global solution to the auxiliary problem
    \[
    \left\lbrace\begin{aligned}
    \dot V(t) = &~{} 0, \quad t \geq 0,\\
    V(0) = &~{} I.
    \end{aligned}\right.
    \]
    Hence, from Remark \ref{signos} $u(t,x)$ satisfies $-\zeta(u(t,x)) \geq 0$. Then Corollary \ref{comparisonprinciple3} applied with $V_1(t)$ gives
    \[
    I < u(t,x) < 1,
    \]
    for each $t \geq 0$. Now we consider $V_2(t) = I - \zeta(I)t$ solving
    \[
    \left\lbrace\begin{aligned}
    \dot V(t) = &~{} -\zeta(I), \quad t \geq 0,\\
    V(0) = &~{} I.
    \end{aligned}\right.
    \]
    Since $I \leq u(t,x) < 1$, the monotonicity of $\zeta$ in this interval implies that $0 < -\zeta(I) \leq -\zeta(u(t,x))$. Then Corollary \ref{comparisonprinciple3} applied this time with $V_2(t)$ gives
    \[
    I - \zeta(I)t \leq u(t,x) < 1.
    \]
    However, the previous inequality is only valid if $T_2 < \dfrac{I}{\zeta(I)}$, which is a contradiction.

\medskip

Case $S<-2$. The proof is similarly to the Theorem \ref{teo:globalpos}. Indeed, one has
\begin{equation}\label{1p6_2_bis}
-2k_1\leq u(t,x)\leq -2k_2,
\end{equation}
where $k_1$ and $k_2$ are such that
%\[
%k_1:=\inf \left\{k\in \N ~ \big| ~ -2k\leq I\right\} \quad and\quad k_2:=\sup\left\{k\in \N ~ \big| ~ -2k\geq S\right\}. 
%\] 
\[%begin{equation}\label{k1k2-}
\begin{aligned}
-2 k_1:= &~{} \max \left\{-2k \leq I ~ \big|  ~ k\in \N \right\}, \\
 -2 k_2:=&~{}  \min  \left\{-2k \geq S  ~ \big| ~ k \in \N \right\}. 
\end{aligned}
\]%end{equation}
Additionally, if $n_1$ and $n_2$ are the unique positive integers such that $I\in (-4n_1-2,-4n_1+2)$, and $S\in (-4n_2-2,-4n_2+2)$ respectively, then
\begin{equation}\label{1p6_3_bis}
-4n_1\leq \liminf_{t\to \infty }u(t,x)\leq \limsup_{t\to \infty }u(t,x)\leq -4n_2.
\end{equation}
The proof of \eqref{1p6_2_bis} and \eqref{1p6_3_bis} follow the second part of the proof of Theorem \ref{teo:globalReal}. In this case, we use the following result:
\begin{lemma}%\label{convergence-U}
    Let $n \in \N=\{1,2,3\ldots\}$. Let $U=U(t,y_0)$ be the solution of \eqref{eqn:odeauto} with \(f = -\zeta\) and initial datum $U(t_0,y_0)=y_0$. If \(y_0 \in (-4n-2, -4n+2)\), then \(U(t,y_0) \in (-4n-2, -4n+2)\) for all $t\geq t_0$
    and \(\displaystyle{\lim_{t \to \infty}U(t,y_0) = -4n}\).     
\end{lemma}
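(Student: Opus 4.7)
The plan is to mimic the proof of Lemma \ref{convergenceU} in the preceding section, replacing $f=\zeta$ by $f=-\zeta$ and tracking the sign flips carefully. The ODE $\dot y=-\zeta(y)$ still has the negative even integers as equilibria (since $\zeta(-2k)=0$), and the trivial zeros $-4n-2$ and $-4n+2$ play the role of the endpoints of the invariant interval. The only real zero of $\zeta$ strictly inside $(-4n-2,-4n+2)$ is the central point $-4n$, which will be the attractor.

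First, I would establish invariance of the interval. Since $\zeta$ is analytic on $\R\setminus\{1\}$ and in particular $C^1$ on a neighborhood of $[-4n-2,-4n+2]$, the IVP \eqref{eqn:odeauto} with $f=-\zeta$ has local uniqueness. Because $-4n-2$ and $-4n+2$ are zeros of $\zeta$, the constant functions $y\equiv -4n-2$ and $y\equiv -4n+2$ are solutions; uniqueness therefore prevents any solution starting in the open interval from crossing these barriers, so $U(t,y_0)\in(-4n-2,-4n+2)$ for all $t\ge t_0$. In particular $U$ is global and bounded.

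Next, I would pin down monotonicity using Remark \ref{signos}. For $s\in(-4n-2,-4n)$ one has $\sgn\zeta(s)=\sgn\sin(\tfrac{\pi}{2}s)$; evaluating at, say, $s=-4n-1$ gives $\sin\bigl(-(2n+\tfrac12)\pi\bigr)=-1$, so $\zeta<0$ and hence $-\zeta(s)>0$ on this subinterval. Similarly on $(-4n,-4n+2)$, evaluating at $s=-4n+1$ gives $\sin\bigl((-2n+\tfrac12)\pi\bigr)=1$, so $\zeta>0$ and $-\zeta(s)<0$. Thus, splitting into cases:
\begin{itemize}
\item if $y_0=-4n$, then $U(t,y_0)\equiv -4n$;
\item if $y_0\in(-4n-2,-4n)$, then $-\zeta(y_0)>0$ and Lemma \ref{odemonotone} (applied with $f=-\zeta$) implies $U(\cdot,y_0)$ is strictly increasing; combined with the invariance established above, $U(t,y_0)\in(-4n-2,-4n)$ for all $t\ge t_0$;
\item if $y_0\in(-4n,-4n+2)$, then $-\zeta(y_0)<0$ and the same lemma gives strict monotone decrease, with $U(t,y_0)\in(-4n,-4n+2)$.
\end{itemize}

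Finally, I would identify the limit. Since $U(\cdot,y_0)$ is globally defined, monotone and bounded, the second assertion of Lemma \ref{odemonotone} yields that $\lim_{t\to\infty}U(t,y_0)$ exists and is a zero of $-\zeta$, equivalently of $\zeta$. The trivial zeros of $\zeta$ are the negative even integers, and the only one lying in the closed interval $[-4n-2,-4n+2]$ other than the endpoints is $-4n$; the non-trivial zeros all have imaginary part strictly positive and so lie off the real line. Since the limit must be distinct from the endpoints (monotonicity forces the limit to lie in the open subinterval containing $y_0$ or to equal $-4n$), we conclude $\lim_{t\to\infty}U(t,y_0)=-4n$. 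I do not anticipate any serious obstacle: the only mildly delicate point is verifying the sign of $\zeta$ on each subinterval via Remark \ref{signos}, which is the exact analog of the computation carried out for Lemma \ref{convergenceU} but with the roles of stable and unstable trivial zeros interchanged under the reflection $\zeta\mapsto -\zeta$.
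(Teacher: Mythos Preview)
Your proposal is correct and follows essentially the same approach as the paper: both split into the three cases $y_0=-4n$, $y_0\in(-4n-2,-4n)$, $y_0\in(-4n,-4n+2)$, use Remark \ref{signos} to determine the sign of $-\zeta$ on each subinterval, invoke uniqueness against the constant equilibria at even negative integers to trap the solution, and then apply Lemma \ref{odemonotone} for monotonicity, globality, and identification of the limit. Your write-up is slightly more explicit about the sign verification and about why the limit cannot be an endpoint, but the argument is the same.
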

\begin{proof}
    If \(y_0 = -4n\), it is direct that \(U(t,y_0) = -4n\).  If \(y_0 \in (-4n-2, -4n)\), Remark \ref{signos} implies that \(-\zeta(y_0) > 0\) and Lemma \ref{odemonotone} returns that $U$ is monotone increasing. 
    Since $U$ is unique and the fact that the negative even numbers are constant solutions imply that 
    \[
    -4n-2 < U(t,y_0) < -4n \quad \text{for all } t > t_0.
    \]
    Consequently, $U$ is global. Lemma \ref{odemonotone} implies that \(\displaystyle{\lim_{t \to \infty}U(t,y_0) = -4n}\).  Finally, if \(y_0 \in (-4n, -4n+2)\), the same idea applies, but now \(U(t,y_0) \in (-4n, -4n+2) \) is decreasing towards \(-4n\).
\end{proof}

\appendix
%{\color{blue} Estoy ordenando estos para que estes mas presentables y recortar los pasos innecesarios  de aqui al 30 dic deberian quedar listos el A y B.} 
\section{Proof of Lemma \ref{lemma:boundh}}\label{app:A}

Now we prove explicit bounds on the functions involved in Lemma \ref{lemma:boundh}. %The proof of Lemma \ref{lemma:boundh} will be divided in two steps.

\medskip

\noindent
{\bf Step 1. Bound on $h(s,\alpha)$.} Recall that from \eqref{h} one has
\[
h(s,\alpha)= 2 \int_0^{\infty} \frac{\sin \left(s \arctan \frac{t}{\alpha}\right)}{\left(\alpha^2+t^2\right)^{\frac{s}{2}}\left(e^{2 \pi t}-1\right)}\, d t.
\]
We prove 
\begin{lemma}%[\ref{lemma:boundh}]
For $s \in [-\beta,\beta]^2$, and $\alpha\in (0,1)$,
\[
|h(s,\alpha)|
%=\left|2 \int_0^{\infty} \frac{\sin \left(s \arctan \frac{t}{\alpha}\right)}{\left(\alpha^2+t^2\right)^{\frac{s}{2}}\left(e^{2 \pi t}-1\right)}\, d t\right|
 \leq H_{1,\alpha, \beta}.
\]
Moreover, the constant $H_{1,\alpha,\beta}$ is decreasing in $\alpha$.
\end{lemma}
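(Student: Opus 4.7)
The plan is to reduce the complex-valued integrand to a real-valued majorant and then split the integral into two regions to control the two potential sources of difficulty: the singularity of $1/(e^{2\pi t}-1)$ at $t=0$, and the possible growth of $(\alpha^2+t^2)^{-s_1/2}$ as $t\to\infty$ when $\re s < 0$.

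First I would write $s = s_1 + is_2$ with $|s_j|\leq \beta$. Since $\alpha^2+t^2 > 0$ on $[0,\infty)$, one has $|(\alpha^2+t^2)^{s/2}| = (\alpha^2+t^2)^{s_1/2}$. For the numerator, the elementary identity $|\sin(a+ib)|^2 = \sin^2 a + \sinh^2 b$ gives
\begin{equation*}
|\sin(s\arctan(t/\alpha))|\leq |\sin(s_1\theta)| + \sinh(|s_2|\theta), \qquad \theta:=\arctan(t/\alpha)\in[0,\pi/2).
\end{equation*}
Combining with $|\sin x|\leq |x|$, $\sinh x \leq x\cosh x$, and $\arctan(t/\alpha) \leq t/\alpha$ yields a \emph{small-argument} estimate
\begin{equation*}
|\sin(s\arctan(t/\alpha))|\leq \frac{\beta}{\alpha}\bigl(1+\cosh(\beta\pi/2)\bigr)\, t
\end{equation*}
that extracts a factor of $t$, together with a \emph{global} estimate $|\sin(s\arctan(t/\alpha))|\leq 1 + \sinh(\beta\pi/2)$ valid for all $t\geq 0$.

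Next, I would split $\int_0^\infty = \int_0^1 + \int_1^\infty$. On $[0,1]$, the small-argument estimate combined with $e^{2\pi t}-1 \geq 2\pi t$ makes the integrand bounded, so the only remaining issue is a lower bound on $(\alpha^2+t^2)^{s_1/2}$: a cheap case-split on the sign of $s_1$ produces $(\alpha^2+t^2)^{s_1/2} \geq \min(\alpha^\beta, 2^{-\beta/2})$ for $\alpha\in(0,1)$. On $[1,\infty)$, the global numerator bound together with $e^{2\pi t}-1 \geq \tfrac{1}{2}e^{2\pi t}$ leaves an integral of the form $\int_1^\infty t^\beta e^{-2\pi t}\, dt$ (using the elementary bound $(\alpha^2+t^2)^{|s_1|/2}\leq 2^{\beta/2}t^\beta$ valid for $t\geq 1$ and $\alpha\in(0,1)$), which is finite and $\alpha$-independent.

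Summing the two contributions produces the explicit constant $H_{1,\alpha,\beta}$. The $[0,1]$ piece carries a factor $1/\alpha$ from the small-argument estimate and $\max(\alpha^{-\beta}, 2^{\beta/2})$ from the denominator lower bound, both non-increasing in $\alpha$ on $(0,1)$; the $[1,\infty)$ piece is independent of $\alpha$. Hence $H_{1,\alpha,\beta}$ is decreasing in $\alpha$, as claimed. The only delicate point is matching the vanishing of $|\sin(s\arctan(t/\alpha))|$ at $t=0$ against the vanishing of $e^{2\pi t}-1$; once this cancellation is in hand, everything reduces to elementary estimates on the two subintervals.
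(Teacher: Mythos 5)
Your proposal is correct and follows essentially the same route as the paper's Step~1 in Appendix~A: both split the integral at $t=1$, cancel the vanishing of $e^{2\pi t}-1$ near $t=0$ against a factor of $t$ extracted from $|\sin(s\,\arctan(t/\alpha))|$ via the small-argument bound and $\arctan(t/\alpha)\leq t/\alpha$, handle the denominator $(\alpha^2+t^2)^{s_1/2}$ by a sign-of-$s_1$ case split, and control the tail by the exponential decay of $1/(e^{2\pi t}-1)$. The only deviations (e.g.\ your $\sinh x\leq x\cosh x$ versus the paper's $\sinh(\beta\omega)\leq t\sinh(\beta/\alpha)$, and $e^{-2\pi t}$ versus $e^{-\pi t}$ in the tail) are cosmetic choices of elementary inequalities that do not change the structure or the argument that the resulting constant is decreasing in $\alpha$.
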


\begin{proof}
For $t\in[0,1]$, let $\omega=\omega(t)=\arctan(t/\alpha)\in [0,\pi/2)$. First of all, $\alpha^2\leq \alpha^2+t^2\leq \alpha^2+1$. Consequently, if $s=x+iy$, %(quizás se puede modificar con algo así)
\[
\begin{aligned}
\left| (\alpha^2+t^2)^{s/2} \right|= &~{} \left| e^{\frac{x}{2}\ln(\alpha^2+t^2)} \right| =(\alpha^2+t^2)^{\frac{x}{2}}\\
\geq &~{} \min\left\{ (\alpha^2+1)^{-\frac{\beta}{2}},\alpha^\beta \right\}\geq \left(\dfrac{\alpha^2}{\alpha^2+1}\right)^{\frac{\beta}{2}}.
\end{aligned}
\]
Also,
\[
\begin{aligned}
% \nonumber to remove numbering (before each equation)
  |\sin(s\omega)| =&~{} |\sin((x+iy)\omega)|   \\
   \leq &~{} |\sin(x\omega)|+|\sinh(y\omega)| \\
   \leq &~{}  |x|\omega +|\sinh(y\omega)| 
   %+\frac{1}{2} (e^{|y|\omega}-e^{-|y|\omega}) \\
  % &\leq& \beta\omega+\frac{1}{2} (e^{\beta\omega}-e^{-\beta\omega})\\
   \leq  \beta\omega+\sinh(\beta\omega) =: l(\omega).
\end{aligned}
\]
We get
\begin{eqnarray*}
% \nonumber to remove numbering (before each equation)
  |h(s, \alpha)| &\leq & 2 \int_0^{\infty} \frac{|\sin \left(s \arctan \frac{t}{\alpha}\right)|}{|\left(\alpha^2+t^2\right)^{\frac{s}{2}}|\left(e^{2 \pi t}-1\right)}\, dt  \\
   &\leq &  2 \left( \int_0^{1} \frac{(1+\alpha^{-2})^{\frac{\beta}{2}} l(\omega) }{   e^{2 \pi t}-1 }\, dt + \int_1^{\infty} \frac{(\alpha^2+t^2)^{\frac{\beta}{2}} l(\omega) }{   e^{2 \pi t}-1 }\, dt \right)=:2(I_1+I_2).
\end{eqnarray*}
For the integral $I_1$ one has the following estimate:
\begin{eqnarray*}
I_1&=& \int_0^{1} \frac{(1+\alpha^{-2})^{\frac{\beta}{2}} l(\omega) }{   e^{2 \pi t}-1 }\, dt\\
&\leq& \int_0^{1} \frac{(1+\alpha^{-2})^{\frac{\beta}{2}} \left(\beta\omega+\sinh(\beta\omega) \right) }{2\pi t}\, dt\\
&\leq& (1+\alpha^{-2})^{\frac{\beta}{2}}\int_0^{1} \frac{ \big(\beta\frac{t}{\alpha}+t\sinh(\frac{\beta}{\alpha}) \big) }{2\pi t}\, dt.
\end{eqnarray*}
We finally obtain
\[ 
I_1\leq  a_{\alpha,\beta}:= \frac1{2\pi }\left(1+\frac1{\alpha^{2}} \right)^{\frac{\beta}{2}}  \left(\frac{\beta}{\alpha}+\sinh \left(\frac{\beta}{\alpha} \right) \right).
\]
Observe that $a_{\alpha,\beta}$ is decreasing in $\alpha$.

For the integral $I_2$ we have the following estimate
\begin{eqnarray*}
% \nonumber to remove numbering (before each equation)
  I_2 &=& \int_1^{\infty} \frac{(\alpha^2+t^2)^{\frac{\beta}{2}} l(\omega) }{   e^{2 \pi t}-1 }\, dt \\
   &\leq&   \int_1^{\infty} e^{ -\pi t} (\alpha^2+t^2)^{\frac{\beta}{2}} \left( \beta\frac{\pi}{2} +\sinh\left(\beta\frac{\pi}2 \right) \right)    \, dt \\
   &=& \left(\frac{\beta\pi}{2} +\sinh\left(\beta\frac{\pi}2 \right)\right)\int_0^{\infty}  {(1+t^2)^{\frac{\beta}{2}} }e^{ -\pi t}\, dt \\
   &\leq & \left(\frac{\beta\pi}{2} +\sinh\left(\beta\frac{\pi}2 \right) \right)\int_0^{\infty} (2^{\beta/2}+2^{\beta/2}t^{\beta}+1+t^2)e^{ -\pi t}  \, dt \\
   &=& \left( \frac{\beta\pi}{2}+\sinh\left(\beta\frac{\pi}2 \right)\right)
 \left(\frac{2^{\beta/2}+1}{\pi} + 2^{\beta/2}\frac{\Gamma(\beta+1)}{\pi^{\beta+1}}+\frac{2}{\pi^{3}}\right)=:b_{\beta}.
   \end{eqnarray*}
Therefore
\[
  |h(s,\alpha)|
  %=\left|2 \int_0^{\infty} \frac{\sin \left(s \arctan \frac{t}{\alpha}\right)}{\left(\alpha^2+t^2\right)^{\frac{s}{2}}\left(e^{2 \pi t}-1\right)}\, d t\right| 
  \leq H_{1,\alpha, \beta}:=2(a_{\alpha,\beta}+b_{\beta}). 
  \]
The decreasing character of $H_{1,\alpha, \beta}$ follows directly from the corresponding behavior of $a_{\alpha,\beta}$.
\end{proof}

\noindent
{\bf Step 2. Bounds on $h'(s,\alpha)$.}

\begin{lemma}%\label{lemma:boundhp}
Let $s=x+iy\in [-\beta,\beta]^2$, we have that:
  \[
  |h'(s,\alpha)|\leq H_{2,\alpha,\beta}.
  \]
\end{lemma}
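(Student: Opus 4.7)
The strategy mirrors Step 1 above, with one additional ingredient to handle the logarithmic factor that comes from differentiating $(\alpha^2+t^2)^{-s/2}$. First I would justify differentiating under the integral by dominated convergence, using the fact that $1/(e^{2\pi t}-1)\sim 1/(2\pi t)$ near $t=0$ and decays exponentially at infinity, together with the local boundedness of all the other factors on $[-\beta,\beta]^2$. This yields
\[
h'(s,\alpha)=2\int_0^{\infty}\frac{\omega\cos(s\omega)-\tfrac{1}{2}\ln(\alpha^2+t^2)\sin(s\omega)}{(\alpha^2+t^2)^{s/2}(e^{2\pi t}-1)}\,dt,\qquad \omega=\omega(t)=\arctan(t/\alpha).
\]

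Next I would bound each factor uniformly on $[-\beta,\beta]^2$, exactly reusing the estimates from Step 1. Writing $s=x+iy$ with $|x|,|y|\le\beta$, one has $|\sin(s\omega)|\le \beta\omega+\sinh(\beta\omega)$ as before, and $|\cos(s\omega)|\le \cosh(\beta\omega)$ from the formula $\cos(x+iy)=\cos x\cosh y-i\sin x\sinh y$. The denominator satisfies $|(\alpha^2+t^2)^{s/2}|^{-1}\le (1+\alpha^{-2})^{\beta/2}$ on $[0,1]$ and $\le (\alpha^2+t^2)^{\beta/2}$ on $[1,\infty)$. The only new term is $\ln(\alpha^2+t^2)$, which I would control via
\[
|\ln(\alpha^2+t^2)|\le \max\bigl\{|\ln\alpha^2|,\,\ln(\alpha^2+1)\bigr\}\quad\text{on }[0,1],
\]
and $\ln(\alpha^2+t^2)\le \ln(\alpha^2+1)+2\ln t$ on $[1,\infty)$.

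Then I would split the integral as $\int_0^1+\int_1^\infty$. On $[0,1]$, using $\omega\le t/\alpha$ together with $1/(e^{2\pi t}-1)\le 1/(2\pi t)$, every $t$ in the numerator cancels against the $1/t$ from the denominator, so the integrand is bounded by a constant depending only on $\alpha,\beta$; the logarithm contributes the bounded extra factor above. On $[1,\infty)$, $\omega\le\pi/2$ is harmless, $\cosh(\beta\omega)$ and $\sinh(\beta\omega)$ are bounded on this range, and the factor $1/(e^{2\pi t}-1)\le e^{-\pi t}$ provides exponential decay dominating the polynomial growth $(\alpha^2+t^2)^{\beta/2}(\ln(\alpha^2+1)+2\ln t)$; the resulting integral reduces to a finite linear combination of $\int_0^\infty t^k e^{-\pi t}\,dt$ and $\int_0^\infty t^k\ln(t)\,e^{-\pi t}\,dt$, all of which are explicit Gamma-type constants.

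Adding the two contributions yields an explicit constant $H_{2,\alpha,\beta}$. The main (mild) technical obstacle is controlling the logarithm near $t=0$, but this is defused by the cancellation $\omega\sim t/\alpha$ against $1/(e^{2\pi t}-1)\sim 1/(2\pi t)$, which leaves the integrand bounded there; the behavior of $H_{2,\alpha,\beta}$ as $\alpha\downarrow 0$ is worse than that of $H_{1,\alpha,\beta}$ only by the $|\ln\alpha^2|$ factor, which remains finite for any fixed $\alpha\in(0,1]$.
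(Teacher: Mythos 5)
Your proposal is correct and the result it establishes is the same, but your route through the estimates differs in a meaningful way from the paper's. Your formula for $h'$, obtained by differentiating under the integral (with the dominated-convergence justification, which the paper omits), is the correct one: it carries the new factor $-\tfrac12\ln(\alpha^2+t^2)$. The paper instead changes variables $w=\arctan(t/\alpha)$, so that the integral runs over $[0,\pi/2)$ with $\alpha^2+t^2=\alpha^2\sec^2 w$; the logarithm becomes $\ln(\alpha\sec w)$, which the paper crudely bounds by $\sec w+\alpha^{-1}$, feeding everything into auxiliary integrals $I_{\alpha,\beta}$, $I_{\alpha,\beta+1}$ (whose closed-form bounds are simply quoted, not derived). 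You, by contrast, stay in the $t$ variable and reuse exactly the $[0,1]$/$[1,\infty)$ split from Step 1: near $t=0$ the cancellation $\omega\sim t/\alpha$ against $(e^{2\pi t}-1)^{-1}\sim (2\pi t)^{-1}$ tames the integrand, the logarithm being handled by the endpoint bound $\max\{|\ln\alpha^2|,\ln(\alpha^2+1)\}$; on $[1,\infty)$ the exponential $e^{-\pi t}$ dominates the polynomial-plus-logarithmic growth, reducing to Gamma-type constants. Both approaches are sound; yours is more parallel to Step 1 (hence more transparent), more self-contained (no unexplained $I_{\alpha,\beta}$), and explicitly addresses interchange of differentiation and integration, while the paper's change of variables compresses the algebra at the cost of opaque auxiliary constants. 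No gaps found.
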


\begin{proof}
Note that:
    \begin{align*}
h(s,\alpha)=&~{}2 \int_0^{\infty} \frac{\sin \left(s \arctan \frac{t}{\alpha}\right)}{\left(\alpha^2+t^2\right)^{\frac{s}{2}}\left(e^{2 \pi t}-1\right)}\, dt .
\end{align*}
Therefore, we have that:
    \begin{align*}
h'(s,\alpha)=&~{} 2\int_0^\infty\dfrac{\arctan\frac{t}{\alpha}\cos(s\arctan\frac{t}{\alpha}))} {(\alpha^2+t^2)^{\frac{s}{2}}(e^{2\pi t}-1)} dt.\\
&-2\int_0^\infty\dfrac{s\sin(s\arctan\frac{t}{\alpha})} {2(\alpha^2+t^2)^{\frac{s}{2}-1}(e^{2\pi t}-1)} dt.\\
= :&~{} I_1+I_2.
\end{align*}
Using that $|\cos(sw)|\leq \cos(w|x|)+\sinh(w|y|)\leq 1+\sinh\left(\beta\frac{\pi}{2}\right)$, 
\begin{align*}    
|I_1|&\leq  \left( 1+\sinh\left(\beta\frac{\pi}{2}\right)\right)\left(\dfrac{2}{\alpha}\right)\int_0^{\infty}\dfrac{\sec(w)^{\beta+2}}{(e^{2\pi \alpha \tan(w)}-1)} dw.\\
&\leq \left( 1+\sinh\left(\beta\frac{\pi}{2}\right)\right)\left(\dfrac{2}{\alpha^{\beta}}\right)I_{\alpha,\beta}.\\
&\leq \left( 1+\sinh\left(\beta\frac{\pi}{2}\right)\right)\left(\dfrac{2}{\alpha^{\beta}}\right)\dfrac{\pi}{2}\left(\dfrac{\sqrt{(\beta+2)^2+4\pi^2\alpha^2}}{2\pi\alpha }\right)^{\beta+2}\left(1+\left(\dfrac{\pi}{2}\right)\dfrac{1}{ e^{\beta+2}-1}\right)= :I_{1,\alpha,\beta}.
\end{align*}
Let us notice that 
\[
|\ln(\sec(w)\alpha)|\leq \ln(\sec(w))+\ln(\alpha^{-1})\leq \sec(w)+\alpha^{-1}.
\]
Consequently,
\begin{align*}
|I_2|&\leq 2\int_{0}^{\frac{\pi}{2}}\dfrac{|\sin(sw)||\alpha^{-s}||\sec(w)^{2-s}|} {(e^{2\pi \alpha \tan(w)}-1)} dw+2\int_{0}^{\frac{\pi}{2}}\dfrac{|\sin(sw)||\alpha^{1-s}||\sec(w)^{3-s}|} {(e^{2\pi \alpha \tan(w)}-1)} dw\\
&\leq \dfrac{2}{\alpha^\beta}\int_{0}^{\frac{\pi}{2}}\dfrac{|\sin(sw)|\sec(w)^{\beta+2}} {(e^{2\pi \alpha \tan(w)}-1)} dw+\dfrac{2}{\alpha^{\beta}}\int_{0}^{\frac{\pi}{2}}\dfrac{|\sin(sw)|\sec(w)^{\beta+3}} {(e^{2\pi \alpha \tan(w)}-1)} dw\\
&\leq \dfrac{4}{\alpha^{\beta}}\int_{0}^{\frac{\pi}{2}}\dfrac{|\sin(sw)|\sec(w)^{\beta+3}} {(e^{2\pi \alpha \tan(w)}-1)} dw.
\end{align*}
Using the same steps as before,% from the proof of Proposition :
\begin{align*}
|I_2|&\leq \dfrac{4}{\alpha^{\beta}}\left(\beta+\dfrac{2}{\pi}\sinh\left(\dfrac{\beta\pi}{2}\right)\right)\int_{0}^{\frac{\pi}{2}}\dfrac{w\sec(w)^{\beta+3}} {(e^{2\pi \alpha \tan(w)}-1)} dw\\
&\leq \dfrac{4}{\alpha^{\beta}}\left(\beta+\dfrac{2}{\pi}\sinh\left(\dfrac{\beta\pi}{2}\right)\right)I_{\alpha,\beta+1}\\
&\leq \dfrac{4}{\alpha^{\beta}}\left(\beta+\dfrac{2}{\pi}\sinh\left(\dfrac{\beta\pi}{2}\right)\right)\dfrac{\pi}{2}\left(\dfrac{\sqrt{(\beta+3)^2+4\pi^2\alpha^2}}{2\pi\alpha }\right)^{\beta+3}\left(1+\left(\dfrac{\pi}{2}\right)\dfrac{1}{ e^{\beta+3}-1}\right)\\
& = : I_{2,\alpha,\beta}.
\end{align*}
Finally with this bounds, we have that:
\[
|h'(s,\alpha)|\leq |I_1|+|I_2|\leq I_{1,\alpha,\beta}+I_{2,\alpha,\beta}=:D_{1,\alpha,\beta}.
\]
The proof is complete.
\end{proof}

\section{Proof of Lemma \ref{lemma:boundd}}\label{app:B}

Recall from \eqref{d} that $d(s,\alpha)=\frac{\alpha^{1-s}-1}{s-1}+\frac{1}{2\alpha^s}$. Denote $f(u)=\frac{e^u-1}{u}$, and define $f$ at $u=0$ by continuity. We shall prove the following:

\begin{lemma}%\label{lemma:boundd}
One has
\[
\|d|_{[-\beta,\beta]^2}(s,\alpha)\|_Y\leq D_{1,\alpha,\beta},
\]
and
\[
\|d'|_{[-\beta,\beta]^2}(s,\alpha)\|_Y\leq D_{2,\alpha,\beta}.
\]
\end{lemma}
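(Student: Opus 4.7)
The plan is to make explicit the fact, hinted at in the statement itself via $f(u) = (e^u-1)/u$, that $d(s,\alpha)$ is entire in $s$ despite the apparent pole at $s=1$. Writing $\alpha^{1-s} = e^{(1-s)\ln\alpha}$, one obtains the clean identity
\[
\frac{\alpha^{1-s}-1}{s-1} = -\ln\alpha \cdot f\bigl((1-s)\ln\alpha\bigr),
\]
where $f(0) := 1$ by continuity, and hence
\[
d(s,\alpha) = -\ln\alpha \cdot f\bigl((1-s)\ln\alpha\bigr) + \tfrac{1}{2}\alpha^{-s}.
\]
This representation removes the quotient and the apparent singularity, reducing the problem to bounding $f$, $f'$, and a plain exponential on the compact square $[-\beta,\beta]^2$.

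Next, I will estimate $f$ and $f'$ via their power series. From $f(u) = \sum_{n\geq 0} u^n/(n+1)!$ the trivial bound $1/(n+1)! \leq 1/n!$ gives $|f(u)| \leq e^{|u|}$, and since $n/(n+1)! \leq 1/n!$ the derivative $f'(u) = \sum_{n\geq 1} n u^{n-1}/(n+1)!$ also satisfies $|f'(u)| \leq e^{|u|}$. Combined with the elementary bound $|1-s| \leq 1+2\beta$ for $s \in [-\beta,\beta]^2$ and with $|\alpha^{-s}| = \alpha^{-\re s} \leq \alpha^{-\beta}$ (valid for $\alpha \in (0,1]$), one gets $|f((1-s)\ln\alpha)| \leq e^{(1+2\beta)|\ln\alpha|} = \alpha^{-(1+2\beta)}$, so that
\[
|d(s,\alpha)| \leq |\ln\alpha|\,\alpha^{-(1+2\beta)} + \tfrac{1}{2}\alpha^{-\beta} =: D_{1,\alpha,\beta}.
\]

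For the derivative, differentiating the representation gives
\[
d'(s,\alpha) = (\ln\alpha)^2 \, f'\bigl((1-s)\ln\alpha\bigr) - \tfrac{\ln\alpha}{2}\,\alpha^{-s},
\]
and the same elementary bounds produce
\[
|d'(s,\alpha)| \leq (\ln\alpha)^2 \alpha^{-(1+2\beta)} + \tfrac{|\ln\alpha|}{2}\alpha^{-\beta} =: D_{2,\alpha,\beta}.
\]
Taking the supremum over $[-\beta,\beta]^2$ is immediate since the bounds are uniform in $s$. The only real step of the argument is the initial rewriting in terms of $f$ that removes the quotient; after that, the estimates reduce to routine series comparisons, and there is no serious obstacle.
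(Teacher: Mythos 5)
Your proposal is correct, and it rests on exactly the same structural observation the paper uses: rewriting the apparently singular term via $f(u)=(e^u-1)/u$, so that $d(s,\alpha)=-\ln\alpha\cdot f\bigl((1-s)\ln\alpha\bigr)+\tfrac12\alpha^{-s}$, which removes the pole at $s=1$. Where you genuinely diverge from the paper is in how you bound $f$ and $f'$ on the relevant compact set. The paper's route goes through a separate technical lemma (their Lemma \ref{prop:boundf}): compute $f'(s)\overline{f'(s)}$ explicitly on $\partial[-r,r]^2$, invoke the maximum principle for the harmonic real and imaginary parts of $f'$, and extract the rather elaborate constant $E_r=\frac{e^r(2r^2+6r+4)}{r^2}$, which must then be composed with the rescaling $r=\ln(\alpha^{-1})(\beta+1)$. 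Your route is to use the power series $f(u)=\sum_{n\ge0}u^n/(n+1)!$ and the elementary comparisons $\frac{1}{(n+1)!}\le\frac{1}{n!}$ and $\frac{n}{(n+1)!}\le\frac{1}{n!}$ to get the one-line bounds $|f(u)|\le e^{|u|}$ and $|f'(u)|\le e^{|u|}$; combined with $|1-s|\le 1+2\beta$ and $|\alpha^{-s}|\le\alpha^{-\beta}$ this gives cleaner explicit constants. Your argument is shorter and avoids the maximum-principle machinery entirely, at the cost of nothing — both yield admissible constants depending only on $\alpha,\beta$, which is all the lemma requires. You also explicitly prove the bound on $d$ itself (giving $D_{1,\alpha,\beta}$), whereas the paper's proof only carries out the computation for $d'$ and leaves the $d$ bound to the reader. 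In that sense your write-up is the more complete one.
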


\begin{proof}
Recall that
\[
d(s,\alpha)=(-\ln(\alpha))f(-\ln(\alpha)(s-1))+ \dfrac{1}{2\alpha^s}.
\]
Therefore
\[
d'(s,\alpha)=(-\ln(\alpha))^2f'(-\ln(\alpha)(s-1))+\dfrac{-\ln(\alpha)}{2\alpha^s}.
\]
Applying the norm and the restriction to $[-\beta,\beta]^2$, we have that:
\begin{align*}
\|d'|_{[-\beta,\beta]^2}(s,\alpha)\|_Y&\leq \left(\ln(\alpha^{-1})\right)^2 \left\| f'|_{[-\beta,\beta]^2}(-\ln(\alpha)(s-1))\right\|_{Y}+\dfrac{\ln(\alpha^{-1})}{2\alpha^\beta}.\\
&\leq \left(\ln(\alpha^{-1})\right)^2\left\|f'|_{\ln(\alpha^{-1})([-\beta,\beta]^2+1)}(s)\right\|_{Y}+\dfrac{\ln(\alpha^{-1})}{2\alpha^\beta}.
\end{align*}
Finally, using Lemma \ref{prop:boundf}, we have that:
\[
\|d'|_{[-\beta,\beta]^2}(s,\alpha)\|_Y\leq \left(\ln(\alpha^{-1})\right)^2E_{\ln(\alpha^{-1})(\beta+1)}+\dfrac{\ln(\alpha^{-1})}{2\alpha^\beta}= : D_{2,\alpha,\beta}.
\]
The proof is complete.
\end{proof}

\begin{lemma}\label{prop:boundf}
Recall the $Y-$norm defined in Definition \ref{def:YX}. For any $r>0$, one has the bound
    \[
    \|f'|_{[-r,r]^2}\|_Y\leq E_{r},
    \]
   with $E_r:=\dfrac{e^{r}(2r^2+6r+4)}{r^2} $.
\end{lemma}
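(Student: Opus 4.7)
The approach is to use an integral representation of $f$ that sidesteps the apparent singularity at $u=0$. A direct antiderivative computation gives
\[
f(u) = \int_0^1 e^{tu}\, dt,
\]
valid for every $u \in \Com$ (including $u=0$ by continuity), so $f$ extends to an entire function. Differentiating under the integral sign yields
\[
f'(u) = \int_0^1 t\, e^{tu}\, dt, \qquad u \in \Com,
\]
which is likewise entire and agrees with the explicit formula $((u-1)e^u+1)/u^2$ away from the origin. This representation will be the main tool.

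Next, for $u = x + iy$ with $(x,y) \in [-r,r]^2$ one has $|e^{tu}| = e^{tx} \leq e^{tr}$ for every $t\in[0,1]$, and hence
\[
|f'(u)| \leq \int_0^1 t\, e^{tr}\, dt = \frac{(r-1)e^r + 1}{r^2}.
\]
Because $\|\cdot\|_Y$ is the maximum of the sup norms of the real and imaginary parts, and each of those is pointwise dominated by $|f'(u)|$, the same bound controls $\|f'|_{[-r,r]^2}\|_Y$.

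It remains to verify the algebraic comparison $(r-1)e^r + 1 \leq (2r^2 + 6r + 4)e^r$, i.e.
\[
(2r^2 + 5r + 5)\, e^r \geq 1,
\]
which is trivial for $r\geq 0$ since the left-hand side is at least $5$. Dividing through by $r^2$ recovers $E_r$ exactly as claimed. I do not expect a real obstacle in this proof: the only tempting pitfall is trying to bound the raw expression $((u-1)e^u+1)/u^2$ by splitting numerator and denominator, which gives an uncontrollable $1/|u|^2$ near the origin; replacing $f'$ by its integral representation removes this issue completely and gives a sharp pointwise bound in a single line.
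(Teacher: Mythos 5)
Your proof is correct, and it takes a genuinely different route from the paper's. The paper works directly with the raw formula $f'(s) = (e^{s}(s-1)+1)/s^{2}$, which is singular-looking at $s=0$, and sidesteps that by invoking the maximum principle for the harmonic functions $f_{1}'$, $f_{2}'$: the supremum over $[-r,r]^{2}$ is attained on the boundary $\partial[-r,r]^{2}$, where $|s|\geq r$, and on the boundary the numerator and denominator can be bounded crudely to yield $E_{r}$. Your argument instead replaces $f'$ by the integral representation $f'(u)=\int_{0}^{1} t\,e^{tu}\,dt$, which is regular at the origin, and gives the pointwise estimate $|f'(u)|\leq\int_{0}^{1}t\,e^{tr}\,dt=((r-1)e^{r}+1)/r^{2}$ for every $u\in[-r,r]^{2}$ in one line; the rest is the elementary inequality $(2r^{2}+5r+5)e^{r}\geq 1$. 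The trade-off: the paper's method requires no extra identity but does need the maximum principle and a boundary restriction to avoid $s=0$; your method bypasses the singularity entirely and in fact produces the strictly sharper constant $((r-1)e^{r}+1)/r^{2}\leq E_{r}$. Both are sound; yours is shorter and gives a better bound. One tiny bookkeeping remark worth making explicit (as the paper does): $\|f'|_{[-r,r]^{2}}\|_{Y}=\max\{\sup|f_{1}'|,\sup|f_{2}'|\}\leq\sup|f'|$, which is the step that lets the pointwise modulus bound control the $Y$-norm.
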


\begin{proof}
 First, we have that
    \[
    f'(s)=\dfrac{e^{s}(s-1)+1}{s^2}.
    \]
It is known that for any complex-valued function one has
    \[
    \max\{|f_1'(s)|,|f_2'(s)|\}^2\leq f'(s)\overline{f'(s)}=f_1'(s)^2+f_2'(s)^2.
    \]
Also,  for any $A \subseteq \Com$ compact,
\[
\|f'|_{A}\|_Y^2\leq \max_{s\in A}\{f'(s)\overline{f'(s)}\}.
\]
Therefore,
\[
\|f'|_{\partial [-r,r]^2}\|_Y^2\leq \max_{s\in \partial [-r,r]^2}\{f'(s)\overline{f'(s)}\}.
\]
For this $f(s)$ and $s\in \partial [-r,r]^2$, we have that
\[
\begin{aligned}
    f'(s)\overline{f'(s)}=&~{}\dfrac{(e^{s}(s-1)-1)}{s^2}\dfrac{(e^{\overline{s}}(\overline{s}-1)-1)}{\overline{s}^2} \\
    %=&~{}\dfrac{e^{s}(s-1)e^{\overline{s}}(\overline{s}-1)-e^{s}(s-1)-e^{\overline{s}}(\overline{s}-1)+1}{(x^2+y^2)^2}\\
    %=&~{}\dfrac{e^{2x}(x^2+y^2-2x+1)-e^{s}(s-1)-e^{\overline{s}}(\overline{s}-1)+1}{(x^2+y^2)^2}\\
    =&~{}\dfrac{e^{2x}(x^2+y^2-2x+1)-e^{x}(e^{iy}(x-1+iy)+e^{-iy}(x-1-iy))+1}{(x^2+y^2)^2}\\
    =&~{}\dfrac{e^{2x}(x^2+y^2-2x+1)-e^{x}(2\cos(y)(x-1)-2\sin(y)y)+1}{(x^2+y^2)^2}\\
    \leq &~{} \dfrac{e^{2r}(2r^2+2r+1)+e^{r}(2(r+1)+2r)+1}{r^4}\\
    \leq &~{} \dfrac{e^{2r}(2r^2+2r+1)+e^{2r}(4r+2)+e^{2r}}{r^4}\\
    \leq &~{} \dfrac{e^{2r}(2r^2+6r+4)}{r^4}.
\end{aligned}
\]
This implies that
\[
\left\| f'|_{\partial [-r,r]^2}\right\|_Y\leq \dfrac{e^{r}(2r^2+6r+4)}{r^2} =E_r.
\]
Finally, using that the function $f'$ is holomorphic, $f_1'$ and $f_2'$ are harmonic, and
\[
\|f'|_{[-r,r]^2}\|_Y= \|f'|_{\partial [-r,r]^2}\|_Y.
\]
We conclude that
\[
\|f'|_{[-r,r]^2}\|_Y= \|f'|_{\partial [-r,r]^2}\|_Y\leq E_{r}.
\]
This ends the proof of the lemma.
\end{proof}

\bibliographystyle{unsrtnat}

%\bibliography{library}
%\bibliography{biblio}

\end{document}